\documentclass[11pt]{amsart}
\usepackage[totalwidth=440pt, totalheight=640pt]{geometry}
\usepackage{amsmath, amssymb, amsthm, amsfonts, mathrsfs, eucal, enumerate, natbib, layout}
\usepackage[all,tips,2cell]{xy}
\usepackage[usenames]{color}
\usepackage{xspace}
\usepackage{ifpdf}
\usepackage{ifthen}
\usepackage{hyperref}
\usepackage{tikz}
\usepackage{verbatim}
\usetikzlibrary{matrix,arrows}
\usepackage{mathtools}
\newtheorem{theorem}{Theorem}[section]
\newtheorem*{theorem*}{Theorem}
\newtheorem{corollary}[theorem]{Corollary}
\newtheorem*{corollary*}{Corollary}
\newtheorem{lemma}[theorem]{Lemma}

\newtheorem*{claim*}{Claim}
\newtheorem*{lemma*}{Lemma}
\newtheorem{proposition}[theorem]{Proposition}
\newtheorem*{proposition*}{Proposition}
\newtheorem{def-prop}[theorem]{Definition-Proposition}
\theoremstyle{definition}
\newtheorem{definition}[theorem]{Definition}
\newtheorem*{definition*}{Definition}
\newtheorem{remark}[theorem]{Remark}

\newtheorem{example}[theorem]{Example}
\newtheorem*{example*}{Example}
\numberwithin{equation}{section}

\newcommand{\id}{\mathrm{id}}
\newcommand{\ass}{\mathsf{a}}
\newcommand{\st}{\mathrm{st}}

\newcommand{\eext}{\mathrm{Ext}}
\newcommand{\Ext}{\mathcal{E}\mathrm{xt}}
\newcommand{\pr}{\mathrm{pr}}
\newcommand{\inc}{\mathrm{inc}}

\newcommand{\Hom}{\mathbb{H}\mathrm{om}}
\newcommand{\HomExt}{\mathbb{H}\mathrm{om}_{\mathcal{E}\mathrm{xt}}}
\newcommand{\hhom}{\mathrm{Hom}}
\newcommand{\MC}{\mathrm{MC}}

\newcommand{\invass}{\mathsf{a}^{-1}}
\newcommand{\comm}{\mathsf{c}}

\newcommand{\TORS}{\textsc{Tors}}

\newcommand{\PICARD}{\mathcal{P}\textsc{icard}(\mathbf{S})}
\newcommand{\FPICARD}{\mathcal{P}\textsc{icard}^{\flat \flat}(\mathbf{S})}

\DeclareMathOperator{\coker}{coker}
\DeclareMathOperator{\fcirc}{{\diamond}}
\DeclareMathOperator{\ra}{{\rightarrow}}
\DeclareMathOperator{\lra}{{\longrightarrow}}
\DeclareMathOperator{\Ra}{{\Rightarrow}}

\DeclareMathOperator{\Rra}{{\Rrightarrow}}
\DeclareMathOperator{\Lla}{{\Lleftarrow}}

\newcommand{\boyut}[1]{\mathscr{#1}}
\newcommand{\oneA}{\boyut A}

\newcommand{\oneU}{\boyut U}

\newcommand{\ES}{\mathbf{S}}

\newcommand{\Ker}{\mathscr{K}\mspace{-4mu}\mathit{er}}

\newcommand{\Aut}{\mathscr{A}\mspace{-4mu}\mathit{ut}}

\newcommand{\twocat}[1]{\mathcal{#1}^{[-2,0]}(\mathbf{S})}
\newcommand{\twoA}{\mathbb{A}}
\newcommand{\twoB}{\mathbb{B}}
\newcommand{\twoC}{\mathbb{C}}
\newcommand{\twoD}{\mathbb{D}}
\newcommand{\twoE}{\mathbb{E}}
\newcommand{\twoF}{\mathbb{F}}

\newcommand{\twoK}{\mathbb{K}}
\newcommand{\twoO}{\mathbb{O}}
\newcommand{\twoL}{\mathbb{L}}

\newcommand{\twoKer}{\mathbb{K}\mathrm{er}}
\newcommand{\twoCo}{\mathbb{C}\mathrm{oker}}
\newcommand{\twoP}{\mathbb{P}}
\newcommand{\twoQ}{\mathbb{Q}}

\newcommand{\twocatD}{\twocat D}
\newcommand{\twocatK}{\twocat K}
\newcommand{\twocatT}{\twocat T}
\newcommand{\twocatC}{\twocat C}

\newcommand{\der}{\mathcal{D}(\mathbf{S})}
\newcommand{\Frac}{\mathtt{FRAC}}
\newcommand{\kesir}[2]{\genfrac{}{}{0pt}{}{#1}{#2}}
\xyoption{2cell}

\begin{document}
\UseAllTwocells
\title[Extensions and Picard 2-Stacks]
{Extensions of Picard 2-Stacks and \\
the cohomology groups ${\mathrm{Ext}}^i$ of length 3 complexes}
\author{Cristiana Bertolin}
\address{Dipartimento di Matematica, Universit\`a di Torino, Via Carlo Alberto 10, 
Italy}
\email{cristiana.bertolin@googlemail.com}
\author{Ahmet Emin Tatar}
\address{Department of Mathematics and Statistics, KFUPM, Dhahran, KSA}
\email{atatar@kfupm.edu.sa}

\subjclass{18G15}
\keywords{length 3 complexes, Picard 2-stacks, extensions, cohomology groups ${\mathrm{Ext}}^i$}
\let\thefootnote\relax\footnotetext{Second author is supported by KFUPM under research grant JF101015}
\date{}


\begin{abstract} 
The aim of this paper is to define and study the 3-category of extensions of Picard 2-stacks over a site $\ES$ and  to furnish a geometrical description of the cohomology groups $\eext^{i}$ of length 3 complexes of abelian sheaves. More precisely, our main Theorem furnishes 
\begin{enumerate}
	\item a parametrization of the equivalence classes of objects, 1-arrows, 2-arrows, and 3-arrows of the 3-category of extensions of Picard 2-stacks by the cohomology groups $ {\mathrm{Ext}}^{i}$, and
	\item a geometrical description of the cohomology groups ${\mathrm{Ext}}^{i}$ of length 3 complexes of abelian sheaves via extensions of Picard 2-stacks.
\end{enumerate}
To this end, we use the triequivalence between the 3-category $2\PICARD$ of Picard 2-stacks and the tricategory $\twocatT$ of length 3 complexes of abelian sheaves over $\ES$ introduced by the second author in \cite{MR2735751}, and we define the notion of extension in this tricategory $\twocatT$, getting a pure algebraic analogue of the 3-category of extensions of Picard 2-stacks. The calculus of fractions that we use to define extensions in the tricategory $\twocatT$ plays a central role in the proof of our Main Theorem. 
\end{abstract} 


\maketitle
\tableofcontents

\section*{Introduction}
 Let $\ES$ be a site. A \emph{Picard $\ES$-2-stack} $\twoP$ is an $\ES$-2-stack in 2-groupoids  equipped with a morphism of 2-stacks
$\otimes: \twoP \times \twoP \lra \twoP$ expressing the group law and two natural 2-transformations $\ass$ and $\comm$ expressing the associativity and commutativity constraints for the group law $\otimes$, such that for any object $U$ of $\ES$, $\twoP(U)$ is a Picard 2-category (i.e. it is possible to make the sum of two objects of $\twoP(U)$ and this sum is associative and commutative). Picard 2-stacks form a 3-category $2\PICARD$ whose hom-2-groupoid consists of additive 2-functors, morphisms of additive 2-functors and modifications of morphisms of additive 2-functors.

As Picard $\ES$-stacks are the categorical analogues of length 2 complexes of abelian sheaves over $\ES$, the concept of Picard $\ES$-2-stacks is the categorical analogue of length 3 complexes of abelian sheaves over $\ES$. In fact in \cite{MR2735751}, the second author proves the existence of an equivalence of categories
\begin{equation*}\label{intro:2st_flat_flat}
2\st^{\flat\flat} \colon \xymatrix@1{\twocatD \ar[r] & 2\FPICARD ,}
\end{equation*} 
between the full subcategory $\twocatD$ of the derived category $\der$ of complexes of abelian sheaves over $\ES$ such that $\mathrm{H}^{-i}(A) \neq 0$ for $i=0,1,2$, and the category of Picard 2-stacks $2\FPICARD$ obtained form the 3-category $2\PICARD$ by taking as objects the Picard 2-stacks and as arrows the equivalence classes of additive 2-functors, i.e. the 2-isomorphism classes (up to modifications) of additive 2-functors (remark that morphisms of additive 2-functors are not strictly invertible, but just invertible up to modifications). We denote by $[\,\,]^{\flat\flat}$ the inverse equivalence of $2\st^{\flat\flat}$. This equivalence of categories $2\st^{\flat\flat}$  generalizes to Picard 2-stacks Deligne's result for Picard stacks \cite[Proposition 1.4.15]{SGA4}.

In this paper we define and study extensions of Picard 2-stacks. If $\twoA$ and $\twoB$ are two Picard 2-stacks over $\ES$, an \emph{extension} of $\twoA$ by $\twoB$ consists of a Picard 2-stack $\twoE$, two additive 2-functors $I:\twoB \ra \twoE$ and $J:\twoE \ra \twoA$, a morphism of additive 2-functors $J \circ I \Rightarrow 0$, such that the following equivalent conditions are satisfied:
\begin{itemize}
	\item $\pi_0(J): \pi_0(\twoE) \ra \pi_0(\twoA)$ is surjective and $I$ induces an equivalence of Picard 2-stacks between $\twoB$ and $\twoKer(J);$ 
	\item $\pi_2(I): \pi_2(\twoB) \ra \pi_2(\twoE)$ is injective and $J$ induces an equivalence of Picard 2-stacks between $\twoCo(I)$ and $\twoA$.
\end{itemize}
The extensions of $\twoA$ by $\twoB$ form a 3-category $\Ext(\twoA,\twoB)$ where
 the objects are extensions of $\twoA$ by $\twoB$, the 1-arrows are morphisms of extensions,
 the 2-arrows are 2-morphisms of extensions and the 3-arrows are 3-morphisms of extensions (see Definitions \ref{def:extpic2stacks}, \ref{def:morextpic2stacks}, \ref{def:2morextpic2stacks}, \ref{def:3morextpic2stacks}).

Although regular morphisms of length 3 complexes of abelian sheaves induce additive 2-functors between Picard 2-stacks, not all of them are obtained in this way. In order to resolve this problem, in \cite{MR2735751} the second author introduces the tricategory $\twocatT$ 
of length 3 complexes of abelian sheaves over $\ES$, in which arrows between length 3 complexes are fractions, and he showes that there is 
a triequivalence 
\begin{equation*}
2\st \colon \xymatrix@1{\twocatT \ar[r] & 2\PICARD ,}
\end{equation*}
between the tricategory $\twocatT$ and the 3-category $2\PICARD$ of Picard 2-stacks. In this paper, we define also
the notion of extension of length 3 complexes in the tricategory $\twocatT$: If $A$ and $B$ be two length 3 complexes of abelian sheaves over the site $\ES$, an  \emph{extension} of $A$ by $B$ consists of a length 3 complex of abelian sheaves $E$, two fractions $i=(q_i,M,p_i): B \ra E$ and $j=(q_j,N,p_j): E \ra A$, a 1-arrow of fractions $R=(r,R,r'):j \fcirc i \Ra 0$, such that the following equivalent conditions are satisfied:
\begin{itemize}
	\item ${\mathrm{H}}^{0}(p_j) \circ ({\mathrm{H}}^{0}(q_j))^{-1}: {\mathrm{H}}^{0}(E) \ra {\mathrm{H}}^{0}(A)$ is surjective and $i$ induces a quasi-isomorphism between $B$ and $ \tau_{\leq 0} (\MC(p_j)[-1])$;
	\item  ${\mathrm{H}}^{-2}(p_i)\circ ({\mathrm{H}}^{-2}(q_i))^{-1} \colon {\mathrm{H}}^{-2}(B) \ra {\mathrm{H}}^{-2}(E)$ is injective and $j$ induces a quasi-isomorphism between $ \tau_{\geq -2}(\MC(p_i))$ and $A$,
\end{itemize}
where $\fcirc$ represents the fraction composition. 

The extensions of $A$ by $B$ in $\twocatT$ form a tricategory $\Ext(A,B)$ where the objects are extensions of $A$ by $B$,
 the 1-arrows are morphisms of extensions, the 2-arrows are 2-morphisms of extensions and the 3-arrows are 3-morphisms of extensions (see Definitions \ref{def:extension_via_fractions}, \ref{def:morphism_of_extensions_via_fractions}, \ref{def:2-morphism_of_extensions_via_fractions}, \ref{def:3-morphism_of_extensions_via_fractions}).
The tricategory $\Ext(A,B)$ is the pure algebraic analogue of the 3-category $\Ext(\twoA,\twoB)$ of extensions of Picard 2-stacks.

We introduce the notions of product, fibered product, called also pull-back, and fibered sum, called also push-down, of Picard 2-stacks (resp. of length 3 complexes). Remark that when we define the fibered product (resp. the fibered sum) of length 3 complexes we are actually computing certain homotopy limits (resp. colimits) of complexes by using the equivalence between such complexes and Picard 2-stacks.

We define the following groups:
\begin{itemize}
\item $\Ext^{1}(\twoA,\twoB)$ is the group of equivalence classes of objects of $\Ext(\twoA,\twoB)$;
\item $\Ext^{0}(\twoA,\twoB)$ is the group of 2-isomorphism classes of morphisms of extensions from an extension $\twoE$ of $\twoA$ by $\twoB$ to itself;
\item $\Ext^{-1}(\twoA,\twoB)$ is the group of 3-isomorphism classes of 2-automorphisms of morphisms of extensions from $\twoE$ to itself; and finally
\item $\Ext^{-2}(\twoA,\twoB)$ is the group of 3-automorphisms of 2-automorphisms of morphisms of extensions from $\twoE$ to itself.
\end{itemize}

The group structure on the $\Ext^{i}(\twoA,\twoB)$ for $i=1,0,-1,-2$ is defined in the following way: 
Using pull-backs and push-downs of Picard 2-stacks, we introduce the notion of sum of two extensions of $\twoA$ by $\twoB$ which
furnishes the abelian group structure on $\Ext^{1}(\twoA,\twoB)$. The 2-stack $\HomExt(\twoE,\twoE)$ of morphisms of extensions from an extension $\twoE$ of $\twoA$ by $\twoB$ to itself is endowed with a Picard structure and so its homotopy groups $\pi_{i}(\HomExt(\twoE,\twoE))$ for $i=0,1,2$ are abelian groups. Since by definition 
$$\Ext^{-i}(\twoA,\twoB) = \pi_{i}(\HomExt(\twoE,\twoE))$$
we have that the $\Ext^{i}(\twoA,\twoB)$ for $i=0,-1,-2$ are abelian groups.

We can finally state our main Theorem which can be read from left to right and from right to left furnishing respectively 
\begin{enumerate}
	\item a parametrization of the elements of $\Ext^{i}(\twoA,\twoB)$ by the cohomology groups $ {\mathrm{Ext}}^{i}\big([\twoA]^{\flat\flat},[\twoB]^{\flat\flat}\big)$, and so in particular a parametrization of the equivalence classes of extensions of $\twoA$ by $\twoB$ by the cohomology group $ {\mathrm{Ext}}^{1}\big([\twoA]^{\flat\flat},[\twoB]^{\flat\flat}\big)$;
	\item a geometrical description of the cohomology groups ${\mathrm{Ext}}^{i}$ of length 3 complexes of abelian sheaves via extensions of Picard 2-stacks.
\end{enumerate}

\begin{theorem}\label{thm:introext}
Let $\twoA$ and $\twoB$ be two Picard 2-stacks. Then for $ i=1,0,-1,-2$, we have the following isomorphisms of groups 
	\[\Ext^{i}(\twoA,\twoB) \cong {\mathrm{Ext}}^{i}\big([\twoA]^{\flat\flat},[\twoB]^{\flat\flat}\big)=\hhom_{\der}\big([\twoA]^{\flat\flat},[\twoB]^{\flat\flat}[i]\big).\]
\end{theorem}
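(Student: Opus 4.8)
The plan is to push the whole statement across the triequivalence $2\st \colon \twocatT \to 2\PICARD$ and then to reduce the four isomorphisms to standard homological computations in $\der$. Set $A=[\twoA]^{\flat\flat}$ and $B=[\twoB]^{\flat\flat}$, so that $2\st(A)\simeq\twoA$ and $2\st(B)\simeq\twoB$. First I would verify that $2\st$ carries the algebraic tricategory of extensions $\Ext(A,B)$ in $\twocatT$ equivalently onto the geometric $3$-category $\Ext(\twoA,\twoB)$, compatibly with the operations of product, pull-back and push-down; this is exactly where the calculus of fractions enters, since a fraction $(q,M,p)$ becomes an honest morphism $p\circ q^{-1}$ in $\der$, and $2\st$ turns the data $(i,j,R)$ of an algebraic extension into the data $(I,J,\dots)$ of a geometric one. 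Because these equivalences respect the abelian group structures on equivalence classes of objects, $1$-arrows, $2$-arrows and $3$-arrows, they give $\Ext^i(\twoA,\twoB)\cong\Ext^i(A,B)$ for $i=1,0,-1,-2$, and it then suffices to identify $\Ext^i(A,B)$ with $\hhom_{\der}(A,B[i])$.

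For $i=1$ I would run the derived analogue of the Yoneda correspondence. Unwinding the two defining conditions of an extension $E$ of $A$ by $B$ — surjectivity on $\mathrm H^0$ together with $B\simeq\tau_{\leq 0}(\MC(p_j)[-1])$, and injectivity on $\mathrm H^{-2}$ together with $A\simeq\tau_{\geq-2}(\MC(p_i))$ — shows that $B\to E\to A\to B[1]$ is a distinguished triangle in $\der$. Sending $E$ to the class of its connecting morphism $A\to B[1]$ produces a map $\Ext^1(A,B)\to\hhom_{\der}(A,B[1])$; conversely any morphism $A\to B[1]$ is completed to a triangle whose middle term carries the structure of an extension, and the calculus of fractions guarantees that homotopic morphisms give equivalent extensions, so the map is a bijection. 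The delicate point is that the sum of extensions, which is defined via the pull-back and push-down of $\twocatT$ (that is, via honest homotopy fibered products and sums of length $3$ complexes), must correspond to the addition of connecting morphisms; I expect this compatibility to be the main obstacle, since it is the step that genuinely uses the homotopy (co)limit computations rather than formal manipulation.

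For $i=0,-1,-2$ the content is homotopical. By definition $\Ext^{-i}(\twoA,\twoB)=\pi_i(\HomExt(\twoE,\twoE))$ for $i=0,1,2$, so I would first establish an equivalence of Picard $2$-stacks $\HomExt(\twoE,\twoE)\simeq\Hom(\twoA,\twoB)$: a morphism of the extension $\twoE$ to itself inducing the identity on $\twoA$ and on $\twoB$ differs from the identity by an additive $2$-functor $\twoA\to\twoB$, and this correspondence is compatible with the Picard structures. Transporting through $[\,]^{\flat\flat}$ and using the standard description of the internal Hom-complex then gives
\[
\pi_i\bigl(\HomExt(\twoE,\twoE)\bigr)=\pi_i\bigl(\Hom(\twoA,\twoB)\bigr)=\mathrm H^{-i}\bigl(\mathrm{RHom}(A,B)\bigr)=\hhom_{\der}(A,B[-i])
\]
for $i=0,1,2$, which is precisely the asserted isomorphism for the three non-positive indices. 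Combined with the $i=1$ case and the transport along $2\st$ of the first paragraph, this yields all four isomorphisms of the Theorem.
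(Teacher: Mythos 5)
Your argument for $i=0,-1,-2$ is essentially the paper's own proof: Lemma \ref{lemma:the_equivalence} gives the equivalence $\Hom_{2\PICARD}(\twoA,\twoB)\simeq\HomExt(\twoE,\twoE)$ (via $F\mapsto\big(a\mapsto a+IFJ(a)\big)$), and Example \ref{example} then identifies $\pi_{-i}$ of the left-hand side with $\mathrm{H}^{i}\big(\tau_{\leq 0}{\mathrm{R}}\hhom([\twoA],[\twoB])\big)=\hhom_{\der}([\twoA],[\twoB][i])$. Your forward map for $i=1$ is also the paper's map $\Theta$: an extension yields a distinguished triangle $B\ra E\ra A\ra +$ in $\der$ (Remark \ref{rem:exact-ext}), and $\Theta(\twoE)=\partial(\id_A)$ is precisely the class of its connecting morphism.

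The gap is in the $i=1$ case, and you name it yourself without closing it: you ``expect'' the compatibility between the sum of extensions (defined by pull-back and push-down, formula (\ref{eq:+})) and addition of connecting morphisms to be ``the main obstacle,'' but you never prove it; without it you have at best a bijection of sets, not an isomorphism of groups. A second hole is well-definedness and injectivity: two morphisms $A\ra B[1]$ that are equal in $\der$ must yield extensions that are equivalent \emph{as extensions} (Definition \ref{def:equivalenceofext}), i.e.\ related by a morphism of extensions inducing the identity on both $\twoA$ and $\twoB$; an isomorphism of distinguished triangles is a weaker datum, and ``the calculus of fractions guarantees'' this is an assertion, not an argument. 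The paper resolves both points with one device absent from your proposal: an embedding $s\colon B\hookrightarrow K$ into a length 3 complex with injective components. This produces an extension $\twoK$ of $\twoL$ by $\twoB$ (with $L=\tau_{\geq -2}\MC(s)$) for which the connecting map $\partial\colon\hhom_{\der}(A,L)\ra\eext^1(A,B)$ is surjective (sequence (\ref{eqn:thm3})); the inverse is then defined as a pull-back, $\Psi(x)=U^*\twoK$, so that well-definedness and injectivity of $\Theta$ reduce, via the universal property of pull-backs, to the split-extension criterion of Definition \ref{definition:split_extension}, and additivity of $\Theta$ follows from the formal computation $\twoE+\twoE'\cong (U+U')^*(\twoK+\twoK)$ together with additivity of $\partial$. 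Without this universal extension, or an equivalent mechanism making the Baer-type sum computable on connecting morphisms, your $i=1$ argument remains incomplete.
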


The use of the tricategory $\twocatT$, and in particular the use of fractions as arrows between length 3 complexes instead of regular morphisms of complexes, play a central role in the proof of this main Theorem.

Picard 3-stacks are not defined yet. Assuming their existence, the group law that we define for equivalence classes of extensions of Picard 2-stacks should furnish a structure of Picard 3-stack on the 3-category $\Ext(\twoA,\twoB)$.
In this setting our main Theorem \ref{thm:introext} says that the Picard 3-stack $\Ext(\twoA,\twoB)$ is equivalent to the Picard 3-stack associated to the object 
\[\tau_{\leq 0} {\mathrm{R}}\hhom\big([\twoA]^{\flat\flat},[\twoB]^{\flat\flat}[1]\big),\]
 of $\mathcal{D}^{[-3,0]}(\ES)$ via the generalization of the equivalence $2\st^{\flat\flat}$ to Picard 3-stacks and length 4 complexes of abelian sheaves. More generally, we expect that extensions of Picard $n$-stacks of $\mathbf{A}$ by $\mathbf{B}$ build a Picard $(n+1)$-stack which should be equivalent to the Picard $(n+1)$-stack associated to the object $\tau_{\leq 0} {\mathrm{R}}\hhom\big([\mathbf{A}],[\mathbf{B}][1]\big)$ of $\mathcal{D}^{[-(n+1),0]}(\ES)$ via the generalization of the equivalence $2\st^{\flat\flat}$ to Picard $(n+1)$-stacks and length $n+2$ complexes of abelian sheaves.  \\
Moreover, always in the setting of Picard 3-stacks, in order to define the groups $\Ext^i(\twoA,\twoB)$ we could use the 
 homotopy groups $\pi_{i}$ for $i=0,1,2,3$ of the Picard 3-stack $\Ext(\twoA,\twoB)$. In fact we have 
$$\Ext^i(\twoA,\twoB)=\pi_{-i+1}(\Ext(\twoA,\twoB))\qquad \mathrm{for}\; i=1,0,-1,-2.$$

Another consequence of the group law defined on $\Ext^{1}(\twoA,\twoB)$ is that for three Picard 2-stacks $\twoO,\twoA$ and $\twoB$, we have the equivalences of 3-categories
$\Ext(\twoO \times \twoA,\twoB) \cong \Ext(\twoO,\twoB) \times \Ext(\twoA,\twoB)$ and $
\Ext(\twoO,\twoA \times \twoB) \cong \Ext(\twoO,\twoA) \times \Ext(\twoO,\twoB),$
which imply the following decomposition for the cohomological groups ${\mathrm{Ext}}^i$ for $i=1,0,-1,-2$:
\[{\mathrm{Ext}}^i([\twoO]^{\flat\flat} \times [\twoA]^{\flat\flat},[\twoB]^{\flat\flat}) \cong {\mathrm{Ext}}^i([\twoO]^{\flat\flat},[\twoB]^{\flat\flat}) \times {\mathrm{Ext}}^i([\twoA]^{\flat\flat},[\twoB]^{\flat\flat}),\]
\[{\mathrm{Ext}}^i([\twoO]^{\flat\flat},[\twoA]^{\flat\flat} \times [\twoB]^{\flat\flat}) \cong {\mathrm{Ext}}^i([\twoO]^{\flat\flat},[\twoA]^{\flat\flat}) \times {\mathrm{Ext}}^i([\twoO]^{\flat\flat},[\twoB]^{\flat\flat}).\]

All the definitions we have introduced in this paper for Picard 2-stacks and for length 3 complexes of abelian sheaves generalize the classical definitions for abelian groups and abelian sheaves respectively: for example, our definition of pull-back of length 3 complexes reduces to the classical notion of pull-back of abelian sheaves if we consider the special case of length 3 complexes concentrated only in degree 0 (i.e. $A^{-2}=A^{-1}=0$ and $A^0 \not=0$). 

We study also the relations between the homotopy groups $\pi_i$ of the Picard 2-stacks $\twoA,\twoB$ and the homotopy groups $\pi_i$ of the extensions of $\twoA$ by $\twoB$. We get a long exact sequence of abelian sheaves (\ref{long_exact_sequence}) which we see as a confirmation that our definition of extension of Picard 2-stacks works. 

We hope that this work will shed some light on the notions of ``pull-back", ``push-down" and ``extension" for higher categories with group-like operation. In particular we pay a lot of attention to write down the proofs in such a way that they can be easily generalized to Picard n-stacks and length n+1 complexes of abelian sheaves.

The most relevant ancestors of this paper are \cite{bertolin} where the first author studies the homological interpretation of extensions of Picard stacks (i.e. she proves Theorem \ref{thm:introext} for Picard stacks), and \cite{MR1983014} where D. Bourn and E. M. Vitale study extensions of symmetric categorical groups, together with their pull-back, push-down and sum. 

The study of extensions of Picard n-stacks has important applications in the theory of motives: for example, in \cite{bertolin2} the first author uses extensions of Picard stacks in order to prove Deligne's conjecture on extensions of 1-motives (recall that a 1-motive can be seen as a complex of abelian sheaves of length 2).

This paper is organized as follows: In Section 1, we recall some basic definitions and results on the 3-category $2\PICARD$ of Picard 2-stacks. In Section 2, we introduce the notions of product, pull-back and push-down for Picard 2-stacks and for length 3 complexes of abelian sheaves in the tricategory $\twocatT$. In Section 3, we define extensions of Picard 2-stacks, morphisms, 2-morphisms and 3-morphisms of extensions of Picard 2-stacks, getting the 3-category $\Ext_{2\PICARD}$ of extensions of Picard 2-stacks. 
In Section 4, we introduce extensions of length 3 complexes in the tricategory $\twocatT$, morphisms, 2-morphisms and 3-morphisms of such extensions in $\twocatT$, getting the tricategory $\Ext_{\twocatT}$ of extensions of length 3 complexes in $\twocatT$. Section 4 is the algebraic counter part of Section 3: in fact, the triequivalence $2\st$ 
between the tricategory $\twocatT$ and the 3-category $2\PICARD$ induces a triequivalence between $\Ext_{\twocatT}$ and $\Ext_{2\PICARD}$.
Using the results of Section 2, in Section 5 we introduce the notions of pull-back and push-down of extensions of Picard 2-stacks which allow us to define an abelian group law on the set $\Ext^1(\twoA,\twoB)$ of equivalence classes of extensions of Picard 2-stacks. This group law is a categorification of the abelian group law on the set of equivalence classes of extensions of abelian groups, known as the Baer sum. In Section 6, we finally prove our main Theorem.
In Appendix A we get a long exact sequence involving the homotopy groups $\pi_i$ of an extension of Picard 2-stacks. In Appendix B we sketch the proof of the fact that the fibered sum of Picard 2-stacks satisfies the universal property.

\section*{Acknowledgment}
We are very grateful to Enrico Vitale for explaining us the pull-back of Picard 2-stacks, and to Pierre Deligne for pointing out the length 3 complex of abelian sheaves corresponding to this pull-back. We thank also Ettore Aldrovandi for interesting conversations  about the subject.

\section*{Notation}

A \emph{strict 2-category} (just called 2-category) $\mathbb{A}=(A,C(a,b),K_{a,b,c},U_{a})_{a,b,c \in A}$ is given by the following data: a set $A$ of objects $a,b,c, ...$; for each ordered pair $(a,b)$ of objects of $A$, a category $C(a,b)$;
   for each ordered triple $(a,b,c)$ of objects $A$, a composition functor $K_{a,b,c}:C(b,c) \times C(a,b) \lra C(a,c),$
       that satisfies the associativity law;
   for each object $a$, a unit functor $U_a:\boldsymbol 1 \ra C(a,a)$ where $\boldsymbol 1$ is the terminal category, that provides a left and right identity for the composition functor.

This set of axioms for a 2-category is exactly like the set of axioms for a category in which the collection of arrows $\hhom(a,b)$ have been replaced by the categories $C(a,b)$. We call the categories $C(a,b)$ (with $a,b \in A$) the \emph{hom-categories} of the 2-category $\mathbb{A}$: the objects of $C(a,b)$ are the \emph{1-arrows} of $\mathbb{A}$ and the arrows of $C(a,b)$ are the \emph{2-arrows} of $\mathbb{A}$. A \emph{2-groupoid} is a 2-category whose 1-arrows are invertible up to a 2-arrow and whose 2-arrows are strictly invertible.

A \emph{bicategory} is weakened version of a 2-category in the following sense: instead of requiring that the associativity
and unit laws for arrows hold as equations, one requires merely that they hold up to isomorphisms (see \cite{MR0220789}). A \emph{bigroupoid} is a bicategory whose 1-arrows are invertible up to a 2-arrow and whose 2-arrows are strictly invertible. The difference between 2-groupoid and bigroupoid is just the underlying 2-categorical structure: one is strict and the other is weak. 

For more details about 2-categories and for other 2-categorical structures such as 2-functors and natural transformations of 2-functors, we refer to \cite[Chapter 1]{MR0364245}.
 
A \emph{strict 3-category} (just called 3-category) $\mathcal{A}=(A,\mathbb{C}(a,b),K_{a,b,c},U_{a})_{a,b,c \in A}$ is given by the following data: a set $A$ of objects $a,b,c, ...$;
  for each ordered pair $(a,b)$ of objects of $A$, a 2-category $\mathbb{C}(a,b)$;
   for each ordered triple $(a,b,c)$ of objects $A$, a composition 2-functor $K_{a,b,c}:\mathbb{C}(b,c) \times \mathbb{C}(a,b) \longrightarrow \mathbb{C}(a,c),$  that satisfies the associativity law;
   for each object $a$, a unit 2-functor $U_a:\boldsymbol 1 \ra \mathbb{C}(a,a)$ where $\boldsymbol 1$ is the terminal 2-category, that provides a left and right identity for the composition 2-functor.

This set of axioms for a 3-category is exactly like the set of axioms for a category in which the arrow-sets $\hhom(a,b)$ have been replaced by the 2-categories $\mathbb{C}(a,b)$. We call the 2-categories $\mathbb{C}(a,b)$ (with $a,b \in A$) the \emph{hom-2-categories} of the 3-category $\mathcal{A}$: the objects of $\mathbb{C}(a,b)$ are the \emph{1-arrows} of $\mathcal{A}$, the 1-arrows of $\mathbb{C}(a,b)$ are the \emph{2-arrows} of $\mathcal{A}$, and the 2-arrows of $\mathbb{C}(a,b)$ are the \emph{3-arrows} of $\mathcal{A}$. 

A \emph{tricategory} is weakened version of a 3-category in the sense of \cite{MR1261589}. We also use \emph{trifunctor} in the sense of \cite{MR1261589}. A \emph{triequivalence of tricategories} $ T: \mathcal{A} \lra \mathcal{A}'$ is a trifunctor which induces biequivalences $T_{a,b}:\mathbb{A}(a,b) \ra \mathbb{A}'(T(a),T(b))$ between the hom-bicategories for all objects $a,b \in \mathcal{A}$ and such that every object in $\mathcal{A}'$ is biequivalent in $\mathcal{A}'$ to an object of the form $T(a)$ where $a$ is an object in $\mathcal{A}$.

Let $\ES$ be a site. For the notions of $\ES$-pre-stacks, $\ES$-stacks and morphisms of $\ES$-stacks we refer to Chapter II 1.2. of \cite{MR0344253}. An \emph{$\ES$-2-stack in 2-groupoids} $\twoP$ is a fibered 2-category in 2-groupoids over $\ES$ such that for every pair of objects $X,Y$ of the 2-category $\twoP(U)$, the fibered category of morphisms $\mathrm{Arr}_{\twoP(U)}(X,Y)$ of $\twoP(U)$ is an $\ES/U$-stack (called the $\ES/U$-stack of morphisms), and 
	 2-descent is effective for objects in $\twoP$.
See \cite[\S I.3]{MR0364245} and \cite[\S 6]{breen-2006} for more details.

Denote by ${\mathcal{K}}(\ES)$ the category of complexes of abelian sheaves on the site $\ES$: all complexes that we consider in this paper are cochain complexes. Let $\twocatK$ be the subcategory of ${\mathcal{K}}(\ES)$ consisting of complexes $K=(K^i)_{i \in \mathbb{Z}}$ such that $K^i=0$ for $i \not= -2,-1$ or $0$. The good truncation $ \tau_{\leq n} K$ of a complex $K$ of ${\mathcal{K}}(\textbf{S})$ is the complex: $ (\tau_{\leq n} K)^i= K^i$ for $i <n,  (\tau_{\leq n} K)^n= \ker(d^n)$ and $(\tau_{\leq n} K)^i= 0$ for $i > n$. The bad truncation $\sigma_{\leq n} K$ of a complex $K$ of ${\mathcal{K}}(\ES)$ is the complex: $ (\sigma_{\leq n} K)^i= K^i$ for $i \leq n $ and $ (\sigma_{\leq n} K)^i= 0$ for $i > n$. For any $i \in {\mathbb{Z}}$, the shift functor $[i]:{\mathcal{K}}(\ES) \rightarrow {\mathcal{K}}(\ES) $ acts on a  complex $K=(K^n)_{n \in \mathbb{Z}}$ as $(K[i])^n=K^{i+n}$ and $d^n_{K[i]}=(-1)^{i} d^{n+i}_{K}$.

Denote by $\der$ the derived category of the category of abelian sheaves on $\ES$, and let $\twocatD$ be the full subcategory of $\der$ consisting of complexes $K$ such that ${\mathrm{H}}^i (K)=0$ for $i \not= -2,-1$ or $0$. If $K$ and $L$ are complexes of $\der$, the group $\eext^i(K,L)$ is by definition $\hhom_{\der}(K,L[i])$ for any $i \in {\mathbb{Z}}$. Let ${\mathrm{R}}\hhom(-,-)$ be the derived functor of the bifunctor $\hhom(-,-)$. The cohomology groups ${\mathrm{H}}^i\big({\mathrm{R}}\hhom(K,L) \big)$ of ${\mathrm{R}}\hhom(K,L)$ are isomorphic to $\hhom_{\der}(K,L[i])$.

Let $\twocatC$ be the 3-category whose objects are length 3 complexes of abelian sheaves over $\ES$ placed in degree -2,-1,0, and whose hom-2-groupoid  $\hhom_{\twocatC} (K,L)$ is the 2-groupoid associated to $\tau_{\leq 0} (\hhom(K,L))$ (see \S 3.1 \cite{MR2735751} for an explicit description of this 3-category).
 
Denote by $\twocatT$ the tricategory whose objects are length 3 complexes of abelian sheaves over $\ES$ placed in degree -2,-1,0, and whose hom-bigroupoid $\hhom_{\twocatT}(K,L)$ is the bigroupoid $\Frac(K,L)$ defined as follows:
\begin{itemize}
	\item an object of $\Frac(K,L)$ is a triple $(q,M,p): K\stackrel{q}{\leftarrow}M \stackrel{p}{\rightarrow}L$, called \emph{fraction},
	where $M$ is a complex of abelian sheaves, $p$ is a morphism of complexes and $q$ is a quasi-isomorphism;
	\item a 1-arrow between fractions $(q_1,M_1,p_1) \Rightarrow (q_2,M_2,p_2)$, called \emph{1-arrow of fractions}, is a triple $(r,N,s)$ with
	$N$ a complex of abelian sheaves and $r:N \rightarrow M_2, s:N \rightarrow M_1$ quasi-isomorphisms such that $q_1 \circ s =q =q_2 \circ r $ and $p_1 \circ s =p =p_2 \circ r $;
	\item a 2-arrow between 1-arrows of fractions $(r_1,N_1,s_1) \Rra (r_2,N_2,s_2)$, called \emph{2-arrow of fractions}, is an isomorphism of complexes of abelian sheaves $t:N_1 \ra N_2$ such that the diamond diagram (see \cite[(4.2)]{MR2735751}) commutes.
\end{itemize}
If $(q_1,M_1,p_1)$ is a fraction from $K$ to $L$ and 
$(q_2,M_2,p_2)$ is a fraction from $L$ to $O$, then their composition 
$(q_2,M_2,p_2) \fcirc (q_1,M_1,p_1)$ is the fraction $ K\stackrel{q_1 \circ \pr_1}{\leftarrow}M_1 \times_{L} M_2 \stackrel{p_2 \circ \pr_2}{\rightarrow}O$. 

The main property of $\Frac(K,L)$ is that $\pi_0(\Frac(K,L)) \cong \hhom_{\twocatD}(K,L)$, where $\pi_0$ denotes the isomorphism classes of objects. 


\section{Recall on the 3-category of Picard 2-stacks}\label{section:recall}

Let $\ES$ be a site. A \emph{Picard $\ES$-2-stack} $\twoP=(\twoP, \otimes, \ass,\comm)$ is an $\ES$-2-stack in 2-groupoids equipped with a morphism of 2-stacks
$\otimes : \twoP \times \twoP \ra \twoP$, called group law of $\twoP$, and with two natural 2-transformations $
  \ass : \otimes \circ (\otimes \times \id_\twoP) \Ra \otimes \circ (\id_\twoP \times \otimes)$ and 
$\comm : \otimes \circ s \Ra \otimes$ (here $s(X,Y)=(Y,X)$)
expressing the associativity and the commutativity constraints of the group law of $\twoP$, such that $\twoP(U)$ is a Picard 2-category for any object $U$ of $\ES$ (see \cite{MR1301844} for the definition of Picard 2-category).

Let $(\twoP, \otimes_{\twoP}, \ass_{\twoP},\comm_{\twoP})$ and $(\twoQ, \otimes_{\twoQ}, \ass_{\twoQ},\comm_{\twoQ})$ be two Picard 2-stacks. \emph{An additive 2-functor} $(F,\lambda_F): \twoP \ra \twoQ$ is given by a morphism of 2-stacks $F: \twoP \ra \twoQ$ (i.e. a cartesian 2-functor) and a natural 2-transformation $\lambda_F \colon \otimes_{\twoQ} \circ F^2  \Ra F \circ \otimes_{\twoP}$, which are compatible with the natural 2-transformations $\ass_{\twoP},\comm_{\twoP},\ass_{\twoQ},\comm_{\twoQ}$, i.e. which are compatible with the Picard structures carried by the underlying 2-categories $\twoP(U)$ and $\twoQ(U).$

Let $(F,\lambda_F), (G,\lambda_G): \twoP \ra \twoQ$  be additive 2-functors between Picard 2-stacks.
A \emph{morphism of additive 2-functors} $(\theta, \Gamma):(F,\lambda_F) \Ra (G,\lambda_G)$ is given by 
a natural 2-transformation of 2-stacks $\theta: F \Ra G$ and a modification of 2-stacks $\Gamma: \lambda_G \circ  \otimes_{\twoQ}*\theta^2 \Rra \theta*\otimes_{\twoP} \circ \lambda_F $ so that $\theta$ and $\Gamma$ are compatible with the additive structures of $(F,\lambda_F)$ and $(G,\lambda_G)$. 

Let $(\theta_1,\Gamma_1),(\theta_2,\Gamma_2) \colon (F,\lambda_F) \Ra (G,\lambda_G) $ be morphisms of additive 2-functors. A \emph{modification of morphisms of additive 2-functors} $(\theta_1,\Gamma_1) \Rra (\theta_2,\Gamma_2)$ is given by a modification  $\Sigma: \theta_1 \Rra \theta_2$ of 2-stacks such that $(\Sigma*\otimes_{\twoP})\lambda_F \circ  \Gamma_1 = \Gamma_2 \circ \lambda_G(\otimes_{\twoQ}*\Sigma^2).$

Since Picard 2-stacks are fibered in 2-groupoids, morphisms of additive 2-functors are invertible up to modifications of morphisms of additive 2-functors and modifications of morphisms of additive 2-functors are strictly invertible. 

Picard 2-stacks over $\ES$ form a 3-category $2\PICARD$ whose objects are Picard 2-stacks and whose hom-2-groupoid consists of additive 2-functors, morphisms of additive 2-functors, and modifications of morphisms of additive 2-functors.

An \emph{equivalence of Picard 2-stacks} between $\twoP$ and $\twoQ$ is an additive 2-functor $F:\twoP \ra \twoQ$ with $F$ an equivalence of 2-stacks. Two Picard 2-stacks are \emph{equivalent as Picard 2-stacks} if there exists an equivalence of Picard 2-stacks between them.

Any Picard 2-stack admits a global neutral object $e$ and the automorphisms of the neutral object $\Aut(e)$ form a Picard stack.

According to \cite[\S 8]{MR1301844}, for any Picard 2-stack $\twoP$ we define the homotopy groups $\pi_i(\twoP)$ for $i=0,1,2$ as follow
\begin{itemize}
	\item $ \pi_0(\twoP)$ is the sheaffification of the pre-sheaf which associates to each object $U$ of $\ES$ the group of equivalence classes of objects of $\twoP(U)$;
	\item $\pi_1(\twoP) =\pi_0 (\Aut(e))$ with $\pi_0 (\Aut(e))$ the sheaffification of the pre-sheaf which associates to each object $U$ of $\ES$ the group of isomorphism classes of objects of $\Aut(e)(U)$;
	\item $\pi_2(\twoP) =\pi_1 (\Aut(e))$ with  $\pi_1(\Aut(e))$ the sheaf of automorphisms of the neutral object of $\Aut(e)$.
\end{itemize}

The algebraic counter part of Picard 2-stacks are the length 3 complexes of abelian sheaves. In \cite{MR2735751}, the second author associates to a length 3 complex of abelian sheaves $A$ a Picard 2-stack denoted by $2\st(A)$ (see \cite{MR2735751} for the details), getting a 3-functor
$ 2\st: \twocatC \rightarrow  2\PICARD$ from the 3-category $\twocatC$ to the 3-category of Picard 2-stacks.
Although morphisms of length 3 complexes of abelian sheaves induce additive 2-functors between the associated Picard 2-stacks, not all of them are obtained in this way. In this sense, the 1-arrows of $\twocatC$ are not geometric and the reason is their strictness. We resolve this problem by weakening the 3-category $\twocatC$, or in other words by introducing the tricategory $\twocatT$. In \cite{MR2735751}, Tatar shows 

\begin{theorem}\label{thm:tatar}
The 3-functor
\begin{equation}\label{2st}
2\st \colon \xymatrix@1{\twocatT \ar[r] & 2\PICARD,}
\end{equation}
given by sending a length 3 complexes of abelian sheaves $A$ to its associated Picard 2-stacks $2\st(A)=\TORS(\oneA,A^0)$ and a fraction $A \stackrel{q}{\leftarrow}M\stackrel{p}{\rightarrow} B$ to the additive 2-functor $2\st(p)2\st(q)^{-1}: 2\st(A) \ra 2\st(B)$, is a triequivalence.
\end{theorem}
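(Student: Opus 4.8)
The definition of triequivalence recalled in the Notation section reduces the statement to two assertions: that $2\st$ is essentially surjective on objects, and that for every pair of length $3$ complexes $A,B$ the induced functor on hom-bigroupoids
\[
2\st_{A,B} \colon \Frac(A,B) \lra \hhom_{2\PICARD}\big(2\st(A),2\st(B)\big)
\]
is a biequivalence. Essential surjectivity on objects I would read off from the equivalence of categories $2\st^{\flat\flat}\colon \twocatD \ra 2\FPICARD$: its objects are exactly the Picard $2$-stacks, and an equivalence of categories is essentially surjective, so every Picard $2$-stack is equivalent as a Picard $2$-stack to $2\st^{\flat\flat}(A)=2\st(A)$ for a suitable $A$. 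A preliminary point to be settled before discussing $2\st_{A,B}$ is that $2\st$ carries quasi-isomorphisms to equivalences of Picard $2$-stacks, so that $2\st(q)^{-1}$ is meaningful and the assignment on fractions is well defined and functorial for the fibered-product composition $M_1 \times_L M_2$.

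For the local biequivalence I would exploit that both $\Frac(A,B)$ and $\hhom_{2\PICARD}(2\st(A),2\st(B))$ are Picard (grouplike symmetric monoidal) $2$-groupoids. A functor of bigroupoids is a biequivalence precisely when it is a bijection on $\pi_0$ and an isomorphism on $\pi_1$ and $\pi_2$ at every object; the Picard structure makes all components translates of the neutral one, so it is enough to test at the neutral object. Thus the task becomes to produce, compatibly with $2\st_{A,B}$, isomorphisms
\[
\pi_i\big(\Frac(A,B)\big)\;\cong\;\eext^{-i}(A,B)\;\cong\;\pi_i\Big(\hhom_{2\PICARD}\big(2\st(A),2\st(B)\big)\Big),\qquad i=0,1,2 .
\]

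The case $i=0$ is essentially handed to us: the main property of $\Frac$ gives $\pi_0(\Frac(A,B))\cong\hhom_{\twocatD}(A,B)$, while $2\st^{\flat\flat}$ identifies $\hhom_{\twocatD}(A,B)\cong\hhom_{2\FPICARD}(2\st(A),2\st(B))=\pi_0(\hhom_{2\PICARD}(2\st(A),2\st(B)))$, and one checks $2\st_{A,B}$ realizes this composite. For $i=1,2$ I would run the analogous but heavier computations. On the fraction side, $\pi_1(\Frac(A,B))$ is the group of isomorphism classes of $1$-arrows of fractions $(r,N,s)\colon f \Ra f$ of a fraction $f$ to itself, and $\pi_2$ is the group of $2$-arrows of fractions on the identity $1$-arrow; using the calculus of fractions — the fibered products and the defining quasi-isomorphisms $q,r,s$ — one identifies these with $\hhom_{\der}(A,B[-1])=\eext^{-1}(A,B)$ and $\hhom_{\der}(A,B[-2])=\eext^{-2}(A,B)$. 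On the Picard side, $\pi_1$ and $\pi_2$ are the groups of morphisms and of modifications (each modulo higher cells) of the zero additive $2$-functor, which the Deligne-type computation of $\hhom$ of Picard $2$-stacks identifies with the same $\eext^{-1}(A,B)$ and $\eext^{-2}(A,B)$; once more one verifies that $2\st_{A,B}$ intertwines the two.

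The main obstacle is exactly these $\pi_1$ and $\pi_2$ computations together with their naturality and monoidality. Extracting $\eext^{-1}$ and $\eext^{-2}$ from $1$-arrows and $2$-arrows of fractions requires a careful passage through the calculus of fractions: one must see that two fractions become $2$-isomorphic in $\Frac(A,B)$ exactly when the associated roofs represent the same morphism of $\der$, that the choice of a $1$-arrow of fractions between them is ambiguous up to $\eext^{-1}$, and that there is a further $\eext^{-2}$ worth of $2$-arrows. This is precisely where replacing the strict $3$-category $\twocatC$ by the tricategory $\twocatT$ pays off: fractions supply exactly the homotopies recorded by ${\mathrm{R}}\hhom$. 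Checking that $2\st_{A,B}$ is compatible with all three identifications — which amounts to tracing an element of $\eext^{-i}(A,B)$ through its roof presentation and through the $2\st$-construction of the corresponding additive $2$-functor — and that these identifications respect the Picard structures, so that the homotopy-group criterion above applies, is the technical heart of the argument.
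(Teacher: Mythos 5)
There is an important mismatch of context here: this paper never proves Theorem \ref{thm:tatar}. It is imported as a known result from \cite{MR2735751}, and Corollary \ref{tatar:corollary} (the equivalence $2\st^{\flat\flat} \colon \twocatD \ra 2\FPICARD$) is explicitly presented as a \emph{consequence} of it. So your proposal can only be judged as a reconstruction of the external proof. Its overall shape is reasonable and consistent with how the result is actually established: split the triequivalence into essential surjectivity on objects plus biequivalences $\Frac(A,B) \ra \hhom_{2\PICARD}(2\st(A),2\st(B))$, and check the latter on $\pi_0$, $\pi_1$, $\pi_2$, using the Picard structure to reduce to the neutral basepoint.

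The genuine gap is circularity in the two places where you lean on $2\st^{\flat\flat}$. Since $2\st$ and $2\st^{\flat\flat}$ coincide on objects, ``essential surjectivity of the equivalence $2\st^{\flat\flat}$'' is literally the statement that every Picard 2-stack is biequivalent to $\TORS(\oneA,A^0)$ for some length 3 complex $A$ --- which is exactly the essential surjectivity clause of the theorem you are proving; and full faithfulness of $2\st^{\flat\flat}$, which you invoke for the $\pi_0$-identification $\pi_0(\Frac(A,B)) \cong \hhom_{\der}(A,B) \cong \pi_0\big(\hhom_{2\PICARD}(2\st(A),2\st(B))\big)$, is precisely the $\pi_0$-part of the local biequivalence being proven. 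Both in this paper and in \cite{MR2735751} the logical arrow points the other way: the corollary is deduced from the theorem, so neither invocation is available. The essential surjectivity is the hard geometric content (the 2-stack analogue of Deligne's \cite[Proposition 1.4.15]{SGA4}) and must be proven by an actual construction extracting a length 3 complex from a presentation of an arbitrary Picard 2-stack; it cannot be cited away. Beyond that, the $\pi_1$ and $\pi_2$ identifications with $\eext^{-1}(A,B)$ and $\eext^{-2}(A,B)$ on both sides, and the verification that $2\st_{A,B}$ intertwines them, are acknowledged by you as ``the technical heart'' but never carried out. So what remains, once the circular steps are removed, is a proof plan rather than a proof: the right skeleton, with all load-bearing steps either assumed or deferred.
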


We denote by $[\,\,]$ the inverse triequivalence of $2\st$. From this theorem, one can deduce that

\begin{corollary}\label{tatar:corollary}
The 3-functor $2\st$ induces an equivalence of categories
\begin{equation}\label{2st_flat_flat}
2\st^{\flat\flat} \colon \xymatrix@1{\twocatD \ar[r] & 2\FPICARD,}
\end{equation}
where $2\FPICARD$ is the category of Picard 2-stacks whose objects are Picard 2-stacks and whose arrows are equivalence classes of additive 2-functors.  
\end{corollary}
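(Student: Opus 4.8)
The plan is to deduce the equivalence of ordinary categories from the triequivalence of Theorem~\ref{thm:tatar} by a general truncation procedure. For any $3$-category (or tricategory) $\mathcal{A}$ I would introduce the ordinary category $\mathcal{A}^{\flat\flat}$ having the same objects as $\mathcal{A}$ and hom-sets
\[
\hhom_{\mathcal{A}^{\flat\flat}}(a,b) = \pi_0\big(\hhom_{\mathcal{A}}(a,b)\big),
\]
that is, the $1$-arrows of $\mathcal{A}$ from $a$ to $b$ identified whenever there is a $2$-arrow between them. The first thing to check is that $(-)^{\flat\flat}$ is well defined: composition in $\mathcal{A}^{\flat\flat}$ is induced by the composition $2$-functors $K_{a,b,c}$ of $\mathcal{A}$, and since these send $2$-arrows (by whiskering) to $2$-arrows they descend to $\pi_0$, so composition of classes is independent of the chosen representatives. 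With this notation the two sides of the statement are identified as follows. By the very definition of $2\FPICARD$ its arrows are the equivalence classes of additive $2$-functors, i.e. the $2$-isomorphism classes up to modifications; hence $2\FPICARD = (2\PICARD)^{\flat\flat}$. On the other hand the main property of the bigroupoid $\Frac(K,L)$ gives $\pi_0(\Frac(K,L)) \cong \hhom_{\twocatD}(K,L)$, so the hom-sets of $(\twocatT)^{\flat\flat}$ agree with those of $\twocatD$ between length $3$ complexes.

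Next I would establish the general principle that a triequivalence $T : \mathcal{A} \to \mathcal{B}$ induces an equivalence $T^{\flat\flat} : \mathcal{A}^{\flat\flat} \to \mathcal{B}^{\flat\flat}$. By definition of triequivalence $T$ induces biequivalences $T_{a,b} : \hhom_{\mathcal{A}}(a,b) \to \hhom_{\mathcal{B}}(Ta,Tb)$ on the hom-bigroupoids. A biequivalence of bigroupoids is essentially surjective and locally an equivalence, hence induces a bijection on connected components; applied to the hom-bigroupoids this says precisely that $T^{\flat\flat}$ is fully faithful. For essential surjectivity I would use that $T$ is essentially surjective on objects: every object $Y$ of $\mathcal{B}$ admits an equivalence $1$-arrow $f : Y \to T(a)$ for some object $a$ of $\mathcal{A}$. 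Since $f$ is an equivalence there is $g : T(a) \to Y$ with $g \circ f$ and $f \circ g$ connected by $2$-arrows to the identities, so the classes $[f],[g]$ are mutually inverse in $\mathcal{B}^{\flat\flat}$; thus $Y \cong T^{\flat\flat}(a)$ in $\mathcal{B}^{\flat\flat}$.

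Applying this to $T = 2\st$ yields an equivalence of categories $(2\st)^{\flat\flat} : (\twocatT)^{\flat\flat} \to (2\PICARD)^{\flat\flat} = 2\FPICARD$. It then remains to identify the source $(\twocatT)^{\flat\flat}$ with $\twocatD$. The hom-sets already agree by the main property of $\Frac$ recalled above, and on objects the inclusion of length $3$ complexes placed in degrees $-2,-1,0$ into $\twocatD$ is essentially surjective: every object $K$ of $\twocatD$ is quasi-isomorphic, via the good truncations, to $\tau_{\geq -2}\tau_{\leq 0}K$, which is concentrated in degrees $-2,-1,0$ and hence an object of $\twocatT$. Composing the resulting equivalence $\twocatD \simeq (\twocatT)^{\flat\flat}$ with $(2\st)^{\flat\flat}$ produces the desired equivalence $2\st^{\flat\flat} : \twocatD \to 2\FPICARD$.

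The hard part will be the compatibility of these identifications with composition, rather than the object-level bookkeeping. Concretely, the isomorphism $\pi_0(\Frac(K,L)) \cong \hhom_{\twocatD}(K,L)$ must be shown to be natural and to carry the fraction composition $\fcirc$, defined through the fibered product $M_1 \times_L M_2$ of complexes, to the Gabriel--Zisman composition of roofs in $\der$; this is exactly the assertion that the calculus of fractions underlying $\Frac$ models the derived-category composition, and it is the one point where I would appeal in detail to the construction of $\twocatT$ in \cite{MR2735751}. Once this, together with the analogous but immediate composition-compatibility of $2\FPICARD = (2\PICARD)^{\flat\flat}$, is in place, the equivalence of categories follows formally from the general principle above.
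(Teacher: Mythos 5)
Your proposal is correct and takes essentially the same route the paper intends but leaves implicit: deduce the corollary from Theorem \ref{thm:tatar} by passing to $\pi_0$ of the hom-bigroupoids, identifying $(2\PICARD)^{\flat\flat}$ with $2\FPICARD$ by definition and $(\twocatT)^{\flat\flat}$ with $\twocatD$ via the main property $\pi_0(\Frac(K,L)) \cong \hhom_{\twocatD}(K,L)$ together with good truncation of objects of $\twocatD$. Your deferral of the naturality and composition-compatibility of that identification to \cite{MR2735751} matches the paper, which likewise relies on that reference for this point.
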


We denote by $[\,\,]^{\flat\flat}$ the inverse equivalence of $2\st^{\flat\flat}$. The 3-functor $2\st$ and the functor $2\st^{\flat\flat}$ coincide on objects, i.e. if $A$ is a length 3 complex, $2\st(A)=2\st^{\flat\flat}(A)$.

We have the following link between the sheaves $\pi_i$ associated to a Picard 2-stack $\twoP$ and the sheaves $\mathrm{H}^{-i}$ associated to a length 3 complex of abelian sheaves $A$ in degrees -2,-1,0: for $i=0,1,2$
$$\pi_i(\twoP)=\mathrm{H}^{-i}([\twoP]^{\flat\flat}) \qquad \mathrm{and} \qquad \pi_i(2\st(A))=\mathrm{H}^{-i}(A).$$

\begin{example}\label{example}
Let $\twoP$ and $\twoQ$ be two Picard 2-stacks. Denote by $\Hom_{2\PICARD}(\twoP,\twoQ)$
 the Picard 2-stack such that for any object $U$ of $\ES$, the objects of the 2-category $\Hom_{2\PICARD}(\twoP,\twoQ)(U)$ are additive 2-functors from $\twoP(U)$ to $\twoQ(U)$, its 1-arrows are morphisms of additive 2-functors and its 2-arrows are modifications of morphisms of additive 2-functors. By \cite[\S 4]{MR2735751}, in the derived category $\der$ we have the equality
\begin{equation*}
  [\Hom_{2\PICARD}(\twoP,\twoQ)]^{\flat\flat} = \tau_{\leq 0}{\mathrm{R}}{\mathrm{Hom}}\big([\twoP],[\twoQ]\big).
\end{equation*}
With these notation, the hom-2-groupoid of two objects $\twoP$ and $\twoQ$ of the 3-category $2\PICARD$ is just $\Hom_{2\PICARD}(\twoP,\twoQ)$.
\end{example}


\section{Fundamental operations on Picard 2-stacks}

\textit{Product of Picard 2-stacks}

Let $\twoA$ and $\twoB$ be two Picard 2-stacks. 

\begin{definition}\label{def:product}
The \emph{product of $\twoA$ and $\twoB$} is the Picard 2-stack 
$\twoA \times \twoB$ defined as follows: for any object $U$ of $\ES$,
\begin{itemize}
  \item an object of the 2-groupoid $(\twoA \times \twoB)(U)$ is a pair $(X,Y)$ of objects with $X$ an object of $\twoA(U)$ and $Y$ an object of $\twoB(U)$;
  \item a 1-arrow  $(X,Y) \ra (X',Y')$ between two objects of $(\twoA \times \twoB)(U)$ is a pair $(f,g)$ with $f:X \ra  X' $ a 1-arrow of $\twoA(U)$ and $g:Y \ra  Y' $ a 1-arrow of $\twoB(U)$;
  \item a 2-arrow $(f,g) \Ra (f',g')$ between two parallel 1-arrows of $(\twoA \times \twoB)(U)$ is a pair $(\alpha,\beta)$ with $\alpha:f \Ra  f' $ a 2-arrow of $\twoA(U)$ and $\beta:g \Ra  g' $ a 2-arrow of $\twoB(U)$.
\end{itemize}
\end{definition}

\textit{Fibered product of Picard 2-stacks}

Consider now two additive 2-functors $F: \twoA \rightarrow \twoP$ and $G: \twoB \rightarrow \twoP$ between Picard 2-stacks.

\begin{definition}\label{def:fiberedproduct}
The \emph{fibered product} of $\twoA$ and $\twoB$ over $\twoP$ is the Picard 2-stack $\twoA \times_{\twoP} \twoB$ defined as follows: for any object $U$ of $\ES$,
\begin{itemize}
\item an object of the 2-groupoid $(\twoA \times_{\twoP} \twoB)(U)$ is a triple $(X,l,Y)$ where $X$ is an object of $ \twoA(U)$, $Y$ is an object of $ \twoB(U)$ and $l:FX \ra GY$ is a 1-arrow in $\twoP(U)$;
\item a 1-arrow $(X_1,l_1,Y_1) \ra (X_2,l_2,Y_2)$ between two objects of $(\twoA \times_{\twoP} \twoB)(U)$ is given by the triple $(m,\alpha,n)$ where $m:X_1 \ra X_2$ and $n: Y_1 \ra Y_2$ are 1-arrows in $\twoA(U)$ and $\twoB(U)$ respectively, and $\alpha \colon l_2 \circ Fm \Ra Gn \circ l_1$ is a 2-arrow in $\twoP(U)$;
\item a 2-arrow between two parallel 1-arrows $(m,\alpha,n), (m',\alpha',n'):(X_1,l_1,Y_1) \ra (X_2,l_2,Y_2)$ of $(\twoA \times_{\twoP} \twoB)(U)$ is given by the pair $(\theta,\phi)$ where $\theta:m \Ra m'$ and $\phi:n \Ra n'$ are 2-arrows in $\twoA(U)$ and $\twoB(U)$ respectively, satisfying the equation $\alpha' \circ (l_2*F\theta) = (G\phi * l_1) \circ \alpha$ of 2-arrows.
\end{itemize}
\end{definition}

The fibered product $\twoA \times_\twoP \twoB$ is also called the \emph{pull-back $F^*\twoB$ of $\twoB$ via $F:\twoA \ra \twoP$} or the \emph{pull-back $G^*\twoA$ of $\twoA$ via $G:\twoB \ra \twoP$}. It is endowed with two additive 2-functors $\pr_1: \twoA \times_\twoP \twoB \ra \twoA$ and $\pr_2: \twoA \times_\twoP \twoB \ra \twoB$ and a morphism of additive 2-functors $\pi \colon G \circ \pr_2 \Ra F \circ \pr_1$.

The fibered product $\twoA \times_{\twoP} \twoB$  satisfies the following universal property: For every diagram 
\begin{equation*}
\begin{tabular}{c}
\xymatrix{\twoC \ar[r]^{H_1} \ar[d]_{H_2} & \twoA \ar[d]^F\\ \twoB \ar[r]_{G} &\twoP \ultwocell<\omit>{\tau}}
\end{tabular}
\end{equation*}
there exists a 4-tuple $(K,\gamma_1,\gamma_2,\Theta)$, where $K \colon \twoC \ra \twoA \times_{\twoP} \twoB$ is an additive 2-functor, $\gamma_1 \colon \pr_1 \circ K \Ra H_1$ and $\gamma_2 \colon \pr_2 \circ K \Ra H_2$ are two morphisms of additive 2-functors, and $\Theta$ is a modification of morphisms of additive 2-functors
\begin{equation*}
\begin{tabular}{c}
\xymatrix{(G\pr_2)K \ar@2[r]^{\ass} \ar@2[d]_{\pi*K} & G(\pr_2K) \ar@{}[d]|{\kesir{\rotatebox{45}{$\Rra$}}{\Theta}}\ar@2[r]^(0.55){G *\gamma_2}& GH_2 \ar@2[d]^{\tau}\\ (F\pr_1)K \ar@2[r]_{\ass}& F(\pr_1K) \ar@2[r]_(0.55){F * \gamma_1} & FH_1}
\end{tabular}
\end{equation*}
This universal property is unique in the following sense: For any other 4-tuple $(K',\gamma_1',\gamma_2',\Theta')$ as above, there exists a 3-tuple $(\psi,\Sigma_1,\Sigma_2)$, where $\psi \colon K \Ra K'$ is a morphism of additive 2-functors, and $\Sigma_1$, $\Sigma_2$ are two modifications of morphisms of additive 2-functors
\begin{equation*}
\begin{tabular}{cc}
\xymatrix@C=0.3cm@R=0.3cm{\pr_1K \ar@2[rr]^{\pr_1*\psi} \ar@2[ddr]_{\gamma_1} &\ar@{}[dd]|(0.4){\kesir{\Lla}{\Sigma_1}}& \pr_1K' \ar@2[ddl]^{\gamma_1'}\\&&\\&H_1&}
&
\xymatrix@C=0.3cm@R=0.3cm{\pr_2K \ar@2[rr]^{\pr_2*\psi} \ar@2[ddr]_{\gamma_2} &\ar@{}[dd]|(0.4){\kesir{\Lla}{\Sigma_2}}& \pr_2K' \ar@2[ddl]^{\gamma_2' }\\&&\\&H_2&}
\end{tabular}
\end{equation*}
satisfying the compatibility 
\begin{equation}\label{universality_of_pullback_6}
\begin{tabular}{c}
\xymatrix@C=0.58cm{(G\pr_2)K \ar@2{}[dr]|{\cong}\ar@2[d]|{(G\pr_2)*\psi} \ar@2[r]^{\ass} & G(\pr_2K) \ar@2[d]|{G*(\pr_2*\psi)} \ar@2@/^0.75cm/[dr]^{G*\gamma_2}&&&(G\pr_2)K  \ar@2[d]_{\pi*K} \ar@2[r]^{\ass} \ar@2@/_0.75cm/[dl]_{(G\pr_2)*\psi} &G(\pr_2K) \ar@2{}[d]|{\rotatebox{90}{$\Rra$}\Theta}  \ar@2[r]^(0.55){G*\gamma_2} & GH_2 \ar@2[d]^{\tau}\\
(G\pr_2)K' \ar@2[d]_{\pi*K'} \ar@2[r]_{\ass} &G(\pr_2K') \ar@2{}[d]|{\rotatebox{90}{$\Rra$}\Theta'} \ar@2[r]_(0.6){G*\gamma_2'} & GH_2 \ar@2{}[ul]|(0.4){\rotatebox{90}{$\Rra$}G*\Sigma_2} \ar@2[d]^{\tau}\ar@{}[r]|(0.4){=}&(G\pr_2)K' \ar@2@/_0.75cm/[dr]_{\pi*K'} \ar@{}[r]|{\cong}& (F\pr_1)K \ar@{}[dr]|{\cong} \ar@2[r]^{\ass} \ar@2[d]|{(F\pr_1)*\psi}&F(\pr_1K)  \ar@2[r]^(0.55){F*\gamma_1} \ar@2[d]|{F*(\pr_1*\psi)}&FH_1 \ar@{}[dl]|(0.4){{\rotatebox{90}{$\Rra$}F*\Sigma_1}}\\
(F\pr_1)K' \ar@2[r]_{\ass}&F(\pr_1K') \ar@2[r]_(0.6){F*\gamma_1'} &FH_1&&(F\pr_1)K' \ar@2[r]_{\ass} &F(\pr_1K') \ar@2@/_0.75cm/[ur]_{F*\gamma_1'}&}
\end{tabular}
\end{equation}
so that for another 3-tuple $(\psi',\Sigma_1',\Sigma_2')$ as above, there exists a unique modification $\mu:\psi \Rra \psi'$
satisfying the following compatibilities for $i=1,2$
\begin{equation*}
\begin{tabular}{c}
\xymatrix@C=0.3cm@R=0.3cm{&\ar@{}[d]|(0.6){\kesir{\rotatebox{90}{\scriptsize{$\Lla$}}\mu}{}} &&&&&\\ \pr_iK \ar@2@/^0.75cm/[rr]^{\pr_i*\psi} \ar@2[rr]_{\pr_i*\psi'} \ar@2[ddr]_{\gamma_i} &\ar@{}[dd]|{\kesir{\Lla}{\Sigma_i'}}& \pr_iK' \ar@2[ddl]^{\gamma_i'}&&\pr_iK \ar@2[rr]^{\pr_i*\psi} \ar@2[ddr]_{\gamma_i} &\ar@{}[dd]|{\kesir{\Lla}{\Sigma_i}}& \pr_iK' \ar@2[ddl]^{\gamma_i'} \\&&&=&&&\\&H_i &&&&H_i&}
\end{tabular}
\end{equation*}
The cells with $\cong$ in diagram (\ref{universality_of_pullback_6}) commute up to a natural modification due to the Picard structure explained in Example \ref{example}. \\

\textit{Fibered product of length 3 complexes} 

Let $f: A \ra P$ and $g: B \ra P$ be two morphisms of complexes in $\twocatK$. In the category of complexes $\twocatK$, the \emph{naive fibered product} of $A$ and $B$ over $P$ (i.e. the degree by degree fibered product)
 is not the good notion of fibered product of complexes since via the triequivalence of tricategories $2\st$ (\ref{2st}) it doesn't furnish the fibered product of Picard 2-stacks. The good definition of fibered product in $\twocatK$ is the following one:

\begin{definition}\label{def:fibered_product_of_length_3_complexes}
The \emph{fibered product} $A \times_P B$ of $A$ and $B$ over $P$ is the good truncation in degree 0 of the mapping cone of $f-g$ shifted of -1:
\[A \times_P B:=  \tau_{ \leq 0}\big(\MC(f-g)[-1]\big) .\]
\end{definition}

If $A=[A^{-2} \stackrel{\delta_A}{\ra} A^{-1} \stackrel{\lambda_A}{\ra} A^0], B=[B^{-2} \stackrel{\delta_B}{\ra} B^{-1} \stackrel{\lambda_B}{\ra} B^0],P=[P^{-2} \stackrel{\delta_P}{\ra} P^{-1} \stackrel{\lambda_P}{\ra} P^0]$ and $f=(f^{-2},f^{-1},f^0): A \ra P, g=(g^{-2},g^{-1},g^0): B \ra P$, the complex $A \times_P B=\tau_{ \leq 0}\big(\MC(f-g)[-1]\big)$ is explicitly the length 3 complex 
\begin{equation}\label{complex:fibered_product}
\xymatrix@1@C=1.5cm{(A^{-2} + B^{-2}) \oplus 0 \ar[r]^(0.47){\delta_{A \times_P B}} & (A^{-1} + B^{-1}) \oplus P^{-2} \ar[r]^{\lambda_{A \times_P B}} & \ker(0, f^{0}-g^{0}-\lambda_P),}
\end{equation}
where $\ker(0, f^{0}-g^{0}-\lambda_P) \subseteq (A^0 +B^0) \oplus P^{-1}$,  $\delta_{A \times_P B}=
\left( \begin{array}{cc}
\delta_A +\delta_B & 0  \\
f^{-2}-g^{-2} & 0 \end{array} \right),$ 
$\lambda_{A \times_P B}=
 \left( \begin{array}{cc}
\lambda_A +\lambda_B & 0  \\
f^{-1}-g^{-1} & -\delta_P \end{array} \right)$, and the differential from $(A^0 +B^0) \oplus P^{-1}$ to $0 \oplus P^0$ is
$\left( \begin{array}{cc}
0 & 0  \\
f^{0}-g^{0} & -\lambda_P \end{array} \right).$

\begin{proposition}\label{lemma:torsor_fibered_product}
Let $A$, $B$, and $P$ be complexes in $\twocatK$ and let $F: 2\st^{\flat\flat}(A) \ra 2\st^{\flat\flat}(P)$ and $G: 2\st^{\flat\flat}(B) \ra  2\st^{\flat\flat}(P)$ be additive 2-functors induced by the morphisms of complexes $f:A \ra P$ and $g:B \ra P$ in $\twocatK$. Then we have the following equivalence of the Picard 2-stacks 
\[ 2\st^{\flat\flat}(A \times_P B) \cong 2\st^{\flat\flat}(A) \times_{2\st^{\flat\flat}(P)} 2\st^{\flat\flat}(B) .\]
\end{proposition}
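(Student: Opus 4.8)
\emph{Approach.} The plan is to reduce the claimed equivalence of Picard 2-stacks to an isomorphism in the derived category, and then verify it by building an explicit comparison 2-functor which I check on homotopy sheaves. Since $2\st^{\flat\flat}$ is an equivalence of categories (Corollary \ref{tatar:corollary}) and coincides with $2\st$ on objects, two Picard 2-stacks are equivalent precisely when their images under the inverse equivalence $[\,\,]^{\flat\flat}$ are isomorphic in $\twocatD$. As $[2\st^{\flat\flat}(A\times_P B)]^{\flat\flat}=A\times_P B=\tau_{\leq 0}(\MC(f-g)[-1])$, the statement amounts to computing $[\,2\st^{\flat\flat}(A)\times_{2\st^{\flat\flat}(P)}2\st^{\flat\flat}(B)\,]^{\flat\flat}$ and identifying it with $\tau_{\leq 0}(\MC(f-g)[-1])$. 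Conceptually this is exactly the expected answer: the fibered product of Definition \ref{def:fiberedproduct} is the homotopy kernel of the difference additive 2-functor $2\st^{\flat\flat}(A)\times 2\st^{\flat\flat}(B)\to 2\st^{\flat\flat}(P)$ sending $(X,Y)$ to $FX\otimes(GY)^{-1}$, and the homotopy kernel of a 2-functor induced by a morphism of complexes $h\colon M\to N$ corresponds under $2\st^{\flat\flat}$ to $\tau_{\leq 0}(\MC(h)[-1])$; applying this with $M=A\oplus B$, $N=P$, $h=f-g$ reproduces exactly the complex (\ref{complex:fibered_product}).

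\emph{Construction of the comparison functor.} To make this rigorous I would first build an additive 2-functor $\Phi\colon 2\st^{\flat\flat}(A\times_P B)\to 2\st^{\flat\flat}(A)\times_{2\st^{\flat\flat}(P)}2\st^{\flat\flat}(B)$ out of the explicit complex (\ref{complex:fibered_product}). The projections of complexes $A\times_P B\to A$ and $A\times_P B\to B$ (onto the two summands of the degreewise factors) are morphisms in $\twocatK$, hence induce via $2\st$ additive 2-functors with values in $2\st(A)$ and $2\st(B)$; these produce the components $X$ and $Y$ of a triple. The $P^{-1}$-summand appearing in degree $0$ of (\ref{complex:fibered_product}), together with the defining kernel relation $f^0-g^0-\lambda_P=0$, is precisely the datum of a $1$-arrow $l\colon FX\to GY$ in $2\st(P)$; the $P^{-2}$-summand in degree $-1$ and the differentials $\delta_{A\times_P B},\lambda_{A\times_P B}$ then supply the $2$-arrow $\alpha$ of a $1$-arrow $(m,\alpha,n)$ and the coherence data of a $2$-arrow $(\theta,\phi)$. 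Assembling these matches the object/$1$-arrow/$2$-arrow description of Definition \ref{def:fiberedproduct} and yields $\Phi$ with its additive structure $\lambda_\Phi$.

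\emph{Checking equivalence and the main obstacle.} Finally I would prove that $\Phi$ is an equivalence of Picard 2-stacks. Since both sides are fibered in $2$-groupoids, it suffices to show $\Phi$ induces isomorphisms on the homotopy sheaves $\pi_i$ for $i=0,1,2$. On the left $\pi_i(2\st^{\flat\flat}(A\times_P B))=\mathrm{H}^{-i}(A\times_P B)$; on the right the $\pi_i$ are read off from the triples, $1$-arrows and $2$-arrows of Definition \ref{def:fiberedproduct}, and both fit into the same Mayer--Vietoris long exact sequence governed by the cone of $f-g$, so the comparison is an isomorphism in each degree. The main obstacle is the construction of $\Phi$: it requires unwinding the torsor description $2\st(A)=\TORS(\oneA,A^0)$ and translating, with the correct signs coming from the differentials in (\ref{complex:fibered_product}) and from the shift $[-1]$, the degree-$(-1)$ and degree-$0$ data into the $2$-arrow $\alpha$ and the coherence equation $\alpha'\circ(l_2*F\theta)=(G\phi*l_1)\circ\alpha$. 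The conceptual point --- identification with a homotopy kernel --- is clean; all the difficulty lies in this coherent bookkeeping. Alternatively one may bypass $\Phi$ and verify that $2\st^{\flat\flat}(A\times_P B)$, equipped with $2\st$ of the two projections and the $1$-arrow datum above, satisfies the universal property stated after Definition \ref{def:fiberedproduct}, concluding by its uniqueness; this trades the homotopy-sheaf computation for checking the $4$-tuple and $3$-tuple coherence diagrams, which is of comparable difficulty.
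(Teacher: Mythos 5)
Your overall plan---build an explicit comparison 2-functor and then verify it is an equivalence---is sound, and your reading of the complex (\ref{complex:fibered_product}) is correct: the $P^{-1}$-summand in degree $0$ together with the kernel relation encodes the 1-arrow $l\colon FX\ra GY$, and the $P^{-2}$-summand in degree $-1$ encodes the 2-arrow datum $\alpha$. The genuine gap is in your verification step. You propose to check that $\Phi$ induces isomorphisms on $\pi_0,\pi_1,\pi_2$ by appealing to a Mayer--Vietoris long exact sequence relating the homotopy sheaves of $\twoA\times_{\twoP}\twoB$ (Definition \ref{def:fiberedproduct}) to those of $\twoA$, $\twoB$, $\twoP$. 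No such sequence is established in the paper or in your proposal, and establishing it---at the level of sheaves, with the sheafification and local gluing that $\pi_0$ and $\pi_1$ require---is essentially as hard as the Proposition itself; it cannot be cited as routine. There is also a circularity in your motivating reduction: the statement that the homotopy kernel of a 2-functor induced by $h\colon M\ra N$ corresponds under $2\st^{\flat\flat}$ to $\tau_{\leq 0}(\MC(h)[-1])$ is, in this paper, the Lemma following Definition \ref{def:homotopy_kernel}, and it is deduced \emph{from} Proposition \ref{lemma:torsor_fibered_product}; likewise, identifying $\twoA\times_{\twoP}\twoB$ with the homotopy kernel of a difference 2-functor $(X,Y)\mapsto FX\otimes (GY)^{-1}$ presupposes a coherent choice of inverses in a Picard 2-stack that you do not construct.

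The paper's proof sidesteps all of this. It works with 2-descent data relative to hypercovers: a 2-descent datum representing an object of $2\st^{\flat\flat}(A)\times_{2\st^{\flat\flat}(P)}2\st^{\flat\flat}(B)$ is a collection $((a,m,\theta),(l,\alpha),(b,n,\phi))$ whose defining relations are \emph{literally} the cocycle relations read off from the complex (\ref{complex:fibered_product}); hence reshuffling it as $((a,b,l),(m,n,\alpha),(\theta,\phi))$ gives a 2-descent datum for $2\st^{\flat\flat}(A\times_P B)$, and conversely. The two functors $\Theta$ and $\Psi$ so defined are mutually quasi-inverse essentially by inspection, so no homotopy-sheaf computation, five lemma, or universal-property argument is needed. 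If you want to salvage your route, the missing work is precisely to prove the long exact sequence you invoke (or to carry out in full the universal-property check you mention as an alternative), and either of these would amount to redoing, in a harder form, the descent-data bookkeeping that the paper performs once.
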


\begin{proof} 
To prove this proposition we construct two morphisms
 \[\Theta \colon 2\st^{\flat\flat}(A) \times_{2\st^{\flat\flat}(P)} 2\st^{\flat\flat}(B) \lra 2\st^{\flat\flat}(A \times_P B),\]
 \[\Psi \colon 2\st^{\flat\flat}(A \times_P B) \lra  2\st^{\flat\flat}(A) \times_{2\st^{\flat\flat}(P)} 2\st^{\flat\flat}(B),\]
and show that $\Theta \circ \Psi \cong \Psi \circ \Theta \cong \id$. We first construct $\Theta$: Let $\oneU=(V_{\bullet} \ra U)$ be a hypercover of an object $U$ of $\ES$ (see \cite[\S 2]{Aldrovandi2009687}) and let
$((a,m,\theta),(l,\alpha),(b,n,\phi))$ be a 2-descent datum representing an object of $2\st^{\flat\flat}(A) \times_{2\st^{\flat\flat}(P)} 2\st^{\flat\flat}(B)$ over $U$ relative to $\oneU$ (see \cite[\S 6]{breen-2006}): in particular $(a,m,\theta)$ and $(b,n,\phi)$ are 2-descent data representing objects of $2\st^{\flat\flat}(A)$ and $2\st^{\flat\flat}(B)$ respectively, and  $(l,\alpha) \colon G(b,n,\phi) \ra F(a,m,\theta)$ is a 1-arrow of $2\st^{\flat\flat}(P)$ over $U$ relative to $\oneU$, i.e. $l \in P^{-1}(V_0)$ and $\alpha \in P^{-2}(V_1)$ such that
 \begin{eqnarray}
\nonumber  f^0(a)-g^0(b)&=&\lambda_P(l), \\
\nonumber f^{-1}(m)-g^{-1}(n)&=&\delta_P(\alpha)+d_0^*(l)-d_1^*(l),
\end{eqnarray}
with the property
\begin{equation*}
f^{-2}(\theta)-g^{-2}(\phi)=d_0^*(\alpha)-d_1^*(\alpha)+d_2^*(\alpha).
\end{equation*} 
Confronting the above relations with the complex (\ref{complex:fibered_product}), we deduce that the collection\\
$((a,b,l),(m,n,\alpha),(\theta,\phi))$ is a 2-descent datum representing an object of $2\st^{\flat\flat}(A \times_P B)$ over $U$ relative to $\oneU$. We define $\Theta((a,m,\theta),(l,\alpha),(b,n,\phi))=((a,b,l),(m,n,\alpha),(\theta,\phi)).$ \\
 Now we construct $\Psi$: Let $((a',b',l'),(m',n',\alpha'),(\theta',\phi'))$ be a 2-descent datum representing an object of $2\st^{\flat\flat}(A \times_P B)$ over $U$ relative to $\oneU$. We define its image under $\Psi$ by $((a',m',\theta'),(l',\alpha'),(b',n',\phi'))$. \\
 It follows directly from the definitions of the morphisms $\Psi$ and $\Theta$ that $\Psi \circ \Theta \cong \Theta \circ \Psi \cong \id$.
\end{proof}

We extend the discussion of fibered product of length 3 complexes of abelian sheaves to the tricategory $\twocatT$. Let $f=(q_f, M,p_f)$ be a fraction from $A$ to $P$ and $g=(q_g, N, p_g)$ be a fraction from $B$ to $P$ in $\twocatT$.

\begin{definition}\label{def:fibered_product_of_fractions}
The \emph{fibered product} $A \boxtimes_{P} B$ of $A$ and $B$ over $P$ is the fibered product $M \times_P N$ of $M$ and $N$ over $P$ via  the morphisms of complexes $p_f \colon M \ra P $ and $p_g \colon N \ra P $ in the sense of Definition \ref{def:fibered_product_of_length_3_complexes}:
\[A \boxtimes_{P} B:= M \times_P N =  \tau_{ \leq 0}\big(\MC(p_f-p_g)[-1]\big).\]
\end{definition}

Using Proposition \ref{lemma:torsor_fibered_product} and the fact that $M$ and $N$ are quasi-isomorphic to $A$ and $B$ respectively, we get that the notion of fibered product of complexes in the tricategory $\twocatT$ corresponds to the notion of fibered product of Picard 2-stacks in $2\PICARD$: $2\st(A \boxtimes_P B) \cong 2\st(A) \times_{2\st(P)} 2\st(B).$\\

\textit{Fibered sum of length 3 complexes}

The dual notion of fibered product of complexes is fibered sum. Let  $f: P \ra A$ and $g: P \ra B$ be two morphisms of complexes in $\twocatK$. In the category of complexes $\twocatK$, the \emph{naive fibered sum} of $A$ and $B$ under $P$ (i.e. the fibered sum degree by degree) is not the good notion of fibered sum for complexes. The good definition is 

\begin{definition}\label{def:fibered_sum_of_length_3_complexes}
The \emph{fibered sum} $A +^{P} B$ of $A$ and $B$ under $P$ is the good truncation in degree -2 of the mapping cone of $f-g$:
\[A +^{P} B:=  \tau_{\geq -2}(\MC(f-g)) .\]
\end{definition}

As in the case of fibered products, we extend the definition of fibered sum to complexes in $\twocatT$. Let $f=(q_f, M,p_f)$ be a fraction from $P$ to $A$ and $g=(q_g, N, p_g)$ be a fraction from $P$ to $B$ in $\twocatT$. The complexes
 $A$ and $B$ are not under a common complex, but under the complexes $M$ and $N$ which are quasi-isomorphic to $P$.
So to define the fibered sum of $A$ and $B$ under $P$, we first make the fibered product $M \times_P N$ of $M$ and $N$ over $P$ via the morphisms of complexes $q_f \colon M \ra P $ and $q_g \colon N \ra P $ in the sense of Definition \ref{def:fibered_product_of_length_3_complexes}. We denote by $\pr_M \colon M \times_P N \ra M $ and by $\pr_N \colon M \times_P N \ra N $ the natural projections underlying the fibered product $M \times_P N$. Then, we define

\begin{definition}\label{def:fibered_sum_fractions}
The \emph{fibered sum} $A \boxplus^{P} B$ of $A$ and $B$ under $P$ is the fibered sum  $A +^{M \times_P N} B$ of $A$ and $B$ under $M \times_P N$  via  the morphisms of complexes $p_f \circ \pr_M \colon M \times_P N \ra A $ and $p_g \circ \pr_N \colon M \times_P N \ra B $ in the sense of Definition \ref{def:fibered_sum_of_length_3_complexes}:
\[A \boxplus^{P} B:= A +^{M \times_P N} B =  \tau_{\geq -2}\big(\MC(p_f \circ \pr_M -p_g \circ \pr_N)\big).\] 
\end{definition}

\textit{Fibered sum of Picard 2-stacks}

To define fibered sum of Picard 2-stacks one needs the 2-stackification process. We circumvent this process, which is yet to be defined, by defining the fibered sum of two Picard 2-stacks in $2\PICARD$ as the image, under the triequivalence of tricategories (\ref{2st}), of the fibered sum of the corresponding complexes in $\twocatT$. 

\begin{definition}\label{def:fibered_sum_of_Picard_2_stacks}
The \emph{fibered sum} $\twoA +^{\twoP} \twoB$ of $\twoA$ and $\twoB$ under $\twoP$ is the Picard 2-stack $2\st([\twoA] \boxplus^{[\twoP]} [\twoB])$.
\end{definition}

The fibered sum $\twoA +^{\twoP} \twoB$ is also called the \emph{push-down $F_*\twoB$ of $\twoB$ via $F:\twoP \ra \twoA$} or the \emph{push-down $G_*\twoA$ of $\twoA$ via $G \colon \twoP \ra \twoB$}. It is endowed with two additive 2-functors $\inc_1 \colon \twoA \ra \twoA +^\twoP \twoB$ and $\inc_2 \colon \twoB \ra \twoA +^\twoP \twoB$ and with a morphism of additive 2-functors $\iota \colon  \inc_2 \circ G \Rightarrow \inc_1 \circ F$. Moreover it satisfies the dual universal property of the fibered product. In Appendix \ref{AnnexB} we state this universal property and we sketch the proof of the fact that the fibered sum $\twoA +^{\twoP} \twoB$ satisfies this universal property. \\

\textit{Examples}

Let $\twoA$ and $\twoB$ be two Picard 2-stacks and $F:\twoA \ra \twoB$ be an additive 2-functor. We denote by $\boldsymbol 0$ the Picard 2-stack whose only object is the unit object and whose only 1- and 2-arrows are identities.

\begin{definition}\label{def:homotopy_kernel}
The \emph{homotopy kernel} $\twoKer(F)$ of $F$ is the fibered product $\twoA \times_{\twoB} \boldsymbol 0$ via the additive 2-functor $F:\twoA \ra \twoB$ and the additive 2-functor $\boldsymbol 0 \ra \twoB$. \\
The \emph{homotopy cokernel} $\twoCo(F)$ of $F$ is the fibered sum $ \boldsymbol 0 +^{\twoA} \twoB$ via the additive 2-functor $F:\twoA \ra \twoB$ and the additive 2-functor $\twoA \ra \boldsymbol 0$. 
\end{definition}

Using Proposition \ref{lemma:torsor_fibered_product} we have 
 
\begin{lemma}
\begin{enumerate}
	\item Let $f:A \ra B$ be a morphism of complexes of $\twocatK$ and let $F: 2\st^{\flat\flat}(A) \ra 2\st^{\flat\flat}(B)$ be the additive 2-functor induced by $f$.\\ 
	We have $\twoKer(F) = 2\st^{\flat\flat}\big(\tau_{ \leq 0}(\MC(f)[-1])\big)$ and $\twoCo(F)=2\st^{\flat\flat} \big(\tau_{\geq -2}(\MC(f))\big)$.
 	\item Let $F: \twoA \rightarrow \twoB$ be an additive 2-functor between Picard 2-stacks and let $f=(q_f,M,p_f)$ be the fraction of $\twocatT$ corresponding to $F$ via (\ref{2st}). \\
 	 We have $\twoKer(F)=2\st\big(\tau_{ \leq 0}(\MC(p_f )[-1])\big)$ and $\twoCo(F)=2\st\big(\tau_{\geq -2}(\MC(p_f ))\big)$ .
\end{enumerate}
\end{lemma}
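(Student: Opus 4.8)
The plan is to reduce the whole statement to Proposition \ref{lemma:torsor_fibered_product} together with the definitions of homotopy kernel and cokernel, so that the only real work is unwinding what those definitions say in the two settings. The guiding observation is that $\twoKer(F)$ and $\twoCo(F)$ are defined (Definition \ref{def:homotopy_kernel}) as a fibered product and a fibered sum against the trivial Picard 2-stack $\boldsymbol 0$, and that $\boldsymbol 0 = 2\st^{\flat\flat}(0)$ corresponds to the zero complex. So each assertion should follow by specializing the general fibered product/sum computations to the case where one of the two input complexes is $0$.

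For part (1), I would start from $\twoKer(F)=\twoA \times_\twoB \boldsymbol 0$ and apply Proposition \ref{lemma:torsor_fibered_product} with $A$ in the role of $A$, the complex $0$ in the role of $B$, and $B$ in the role of $P$; the two morphisms are $f\colon A\ra B$ and the zero morphism $0\ra B$. Since $2\st^{\flat\flat}(0)\cong\boldsymbol 0$, Proposition \ref{lemma:torsor_fibered_product} yields
\[
\twoKer(F)\;\cong\;2\st^{\flat\flat}(A)\times_{2\st^{\flat\flat}(B)}2\st^{\flat\flat}(0)\;\cong\;2\st^{\flat\flat}(A\times_B 0).
\]
By Definition \ref{def:fibered_product_of_length_3_complexes}, $A\times_B 0=\tau_{\leq 0}(\MC(f-0)[-1])=\tau_{\leq 0}(\MC(f)[-1])$, which gives the first formula. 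For the cokernel, I would dualize: $\twoCo(F)=\boldsymbol 0 +^{\twoA}\twoB$ is by Definition \ref{def:fibered_sum_of_Picard_2_stacks} the image under $2\st^{\flat\flat}$ of the fibered sum $0 +^A B=\tau_{\geq -2}(\MC(A\xrightarrow{0}0,\,A\xrightarrow{f}B))=\tau_{\geq -2}(\MC(f))$ (after simplifying the zero map), so $\twoCo(F)=2\st^{\flat\flat}(\tau_{\geq -2}(\MC(f)))$. The small point to check here is that the mapping cone of the difference $f-g$ with $g$ valued in $0$ reduces, up to the relevant sign conventions for $[-1]$ recorded in the Notation section, to the mapping cone of $f$ alone.

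For part (2), the input is an additive 2-functor $F\colon\twoA\ra\twoB$ rather than a strict morphism of complexes, and the key is that under the triequivalence $2\st$ (Theorem \ref{thm:tatar}) $F$ corresponds to a fraction $f=(q_f,M,p_f)$. The plan is to replace $\twoA$ by the quasi-isomorphic complex $M$ via $q_f$: since $q_f$ is a quasi-isomorphism, $2\st(M)\cong 2\st(\twoA)$ in $\twocatT$, and $F$ is realized as $2\st(p_f)2\st(q_f)^{-1}$. Thus $\twoKer(F)$ and $\twoCo(F)$ are computed exactly as in part (1) but with the strict map $p_f\colon M\ra \twoB$ playing the role of $f$; this is already codified in Definition \ref{def:fibered_product_of_fractions} and Definition \ref{def:fibered_sum_fractions}, which define the fibered product and sum of fractions precisely by taking the mapping cone of the $p$-component. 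Invoking the closing remark after Definition \ref{def:fibered_product_of_fractions} (that $2\st(A\boxtimes_P B)\cong 2\st(A)\times_{2\st(P)}2\st(B)$) with $P=\twoB$ and the second complex equal to $0$ then gives $\twoKer(F)=2\st(\tau_{\leq 0}(\MC(p_f)[-1]))$, and dually $\twoCo(F)=2\st(\tau_{\geq -2}(\MC(p_f)))$.

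The main obstacle I anticipate is not any single computation but the bookkeeping needed to see that passing from the additive 2-functor $F$ to its fraction $f=(q_f,M,p_f)$ genuinely replaces $F$ by the strict map $p_f$ for the purpose of these homotopy (co)limits — i.e. that the homotopy kernel and cokernel are invariant under the quasi-isomorphism $q_f$ and depend only on the fraction up to the equivalences of $\twocatT$. Concretely one must check that the fibered product $M\times_{\twoB}0$ computed with $p_f$ agrees, after applying $2\st$, with $\twoKer(F)$ as defined by Definition \ref{def:homotopy_kernel}; this is where the triequivalence $2\st$ and Proposition \ref{lemma:torsor_fibered_product} do the essential work, and once that compatibility is granted both formulas fall out by the same truncated-mapping-cone computation as in part (1).
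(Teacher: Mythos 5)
Your proposal is correct and follows essentially the same route as the paper, which states this lemma as an immediate consequence of Proposition \ref{lemma:torsor_fibered_product} (together with Definitions \ref{def:fibered_product_of_length_3_complexes}, \ref{def:fibered_sum_of_length_3_complexes}, \ref{def:fibered_product_of_fractions}, \ref{def:fibered_sum_fractions} and \ref{def:homotopy_kernel}): one specializes the fibered product/sum computations to the case where one of the two complexes is $0$, so that the truncated mapping cone of $f$ (resp.\ of $p_f$ in the fraction case) falls out, with quasi-isomorphism invariance supplied by the remark following Definition \ref{def:fibered_product_of_fractions}. The bookkeeping you flag as the main obstacle in part (2) is exactly what that remark records, so your argument fills in the paper's omitted details rather than diverging from them.
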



\section{The 3-category of extensions of Picard 2-stacks}\label{section:3_category_of_extensions}

Let $\twoA$ and $\twoB$ be two Picard 2-stacks. 

\begin{definition}\label{def:extpic2stacks}
An \emph{extension} $(I,\twoE,J,\varepsilon)$ of $\twoA$ by $\twoB$ consists of
\begin{itemize}
	\item a Picard 2-stack $\twoE$;
	\item two additive 2-functors $I:\twoB \rightarrow \twoE$ and $J:\twoE \rightarrow \twoA$;
	\item a morphism of additive 2-functors $\varepsilon:J \circ I \Ra 0$ between $J \circ I$ and the null 2-functor $0:\twoB \ra \twoA$,
\end{itemize}
 such that the following equivalent conditions are satisfied:
\begin{itemize}
	\item $\pi_0(J): \pi_0(\twoE) \rightarrow \pi_0(\twoA)$ is surjective and $I$ induces an equivalence of Picard 2-stacks between $\twoB$ and $\twoKer(J)$; 
	\item $\pi_2(I): \pi_2(\twoB) \rightarrow \pi_2(\twoE)$ is injective and $J$ induces an equivalence of  Picard 2-stacks between $\twoCo(I)$ and $\twoA$.
\end{itemize}
\end{definition}

Let $(I,\twoE,J,\varepsilon)$ be an extension of $\twoA$ by $\twoB$ and let $(K,\twoF,L,\varsigma)$ be an extension of $\twoC$ by $\twoD$.

\begin{definition}\label{def:morextpic2stacks}
A \emph{morphism of extensions} $(I,\twoE,J,\varepsilon) \ra (K,\twoF,L,\varsigma)$ is given by the collection $(H,F,G,\alpha,\beta,\Phi)$ where
\begin{itemize}
\item $H:\twoB \ra \twoD$, $F:\twoE \ra \twoF$, and $G:\twoA \ra \twoC$ are additive 2-functors;
\item $\alpha:F \circ I \Ra K \circ H$ and $\beta: L \circ F \Ra G \circ J$ are morphisms of additive 2-functors;
\item $\Phi$ is the modification of morphisms of additive 2-functors,
\begin{equation}\label{morphism_of_extensions}
\begin{tabular}{c}
\xymatrix@R=1.25cm{ (LF)I \ar@2[r]^{\ass} \ar@2[d]_{\beta*I} &L(FI) \ar@2[r]^{L * \alpha} \ar@{}[dr]|{\kesir{\rotatebox{45}{$\Rra$}}{\Phi}}& L(KH) \ar@2[r]^(0.55){\invass}& (LK)H \ar@2[d]^{\varsigma*H}\\(GJ)I \ar@2[r]_{\ass} & G(JI) \ar@2[r]_{G * \varepsilon} & G0 \ar@2[r]_{\mu_G}& 0H}
\end{tabular}
\end{equation}
where $\mu_G \colon G \circ 0 \Ra 0 \circ H$ is the morphism of additive 2-functors defined as follows: For any $U \in \ES$ and for any object $X$ of $\twoB(U)$, the component of $\mu_G$ at $X$ is the natural arrow  $[\mu_G]_X \colon G e_{\twoA} \ra e_{\twoC}$ in $\twoC(U)$.
\end{itemize}
\end{definition}

\begin{definition}\label{def:equivalenceofext}
Two extensions $\twoE_1=(I_1,\twoE_1,J_1,\varepsilon_1)$ and $\twoE_2=(I_2,\twoE_2,J_2,\varepsilon_2)$ of $\twoA$ by $\twoB$ are \emph{equivalent as extensions of $\twoA$ by $\twoB$} if there exists a morphism of extensions from $\twoE_1$ to $\twoE_2$ inducing identities on $\twoA$ and on $\twoB$.
\end{definition}

In other words, $\twoE_1$ and $\twoE_2$ are equivalent as extensions of $\twoA$ by $\twoB$ if it exists an additive 2-functor $F:\twoE_1 \ra \twoE_2$, two morphisms of additive 2-functors $\alpha \colon F \circ I_1 \Ra I_2 \circ \id_\twoB$ and $\beta \colon J_2 \circ F \Ra \id_{\twoA} \circ J_1$ and a modification of morphisms of additive 2-functors $\Phi$ such that $(\id_\twoB,F,\id_\twoA,\alpha,\beta,\Phi)$ is a morphism of extensions.

Let $(H_1, F_1, G_1, \alpha_1,\beta_1,\Phi_1)$ and $(H_2, F_2, G_2, \alpha_2,\beta_2,\Phi_2)$ be two morphisms of extensions 
from $(I,\twoE,J,\varepsilon)$ to $(K,\twoF,L,\varsigma)$

\begin{definition}\label{def:2morextpic2stacks}
A \emph{2-morphism of extensions} $(H_1, F_1, G_1, \alpha_1,\beta_1,\Phi_1) \Ra (H_2, F_2, G_2, \alpha_2,\beta_2,\Phi_2)$
 is given by the collection $(\gamma, \delta, \epsilon, \Psi, \Omega)$ where 
\begin{itemize}
\item $\gamma:H_1 \Ra H_2$, $\delta:F_1 \Ra F_2$, $\epsilon:G_1 \Ra G_2$ are morphisms of additive 2-functors;
\item  $\Psi$ and $\Omega$ are modifications of morphisms of additive 2-functors
\begin{equation}\label{diagram:1-morphism_modification}
\begin{tabular}{cc}
\xymatrix{F_1I \ar@2[r]^{\alpha_1} \ar@2[d]_{\delta * I} & KH_1 \ar@2[d]^{K * \gamma}\\F_2I \ar@2[r]_{\alpha_2}& KH_2 \ar@{}[ul]|{\kesir{\rotatebox{45}{$\Rra$}}{\Psi}}}
&
\xymatrix{LF_1 \ar@2[r]^{\beta_1} \ar@2[d]_{L * \delta} & G_1J \ar@2[d]^{\epsilon*J}\\LF_2 \ar@2[r]_{\beta_2}& G_2J \ar@{}[ul]|{\kesir{\rotatebox{45}{$\Rra$}}{\Omega}}}
\end{tabular}
\end{equation}
\end{itemize}
so that the pasting of the 3-arrows in the diagram
\begin{equation}\label{diagram:compatibility_condition_of_2_morphisms_of_extensions}
\begin{tabular}{l}
\xymatrix{(LF_2)I \ar@{}[dr]|{\kesir{\rotatebox{270}{$\Rra$}}{\Omega*I}} \ar@2[r]^{\beta_2*I} & (G_2J)I \ar@{}[dr]|{\cong} \ar@2[r]^{\ass} & G_2(JI) \ar@{}[dr]|{\cong} \ar@2[r]^{G_2*\varepsilon} & G_20 \ar@2[dr]^{\mu_{G_2}} &\\
(LF_1)I \ar@2[r]_{\beta_1*I} \ar@2[u]^{L*(\delta*I)}\ar@2[d]_{\ass}& (G_1J)I \ar@{}[dr]|{\kesir{\rotatebox{270}{$\Rra$}}{\Phi_1}} \ar@2[r]_{\ass} \ar@2[u]|{(\epsilon*J)*I}&G_1(JI)\ar@2[r]_{G_1*\varepsilon} \ar@2[u]|{\epsilon*(JI)}& G_10 \ar@{}[r]|{\cong}\ar@2[d]_{\mu_{G_1}} \ar@2[u]^{\epsilon*0}& 0H_2\\
L(F_1I) \ar@2[r]_{L*\alpha_1} & L(KH_1) \ar@2[r]_{\invass} & (LK)H_1 \ar@2[r]_{\varsigma*H_1} & 0H_1 \ar@2[ur]_{0*\delta}&}
\end{tabular}
\end{equation}
is equal to the pastings of the 3-arrows in the diagram
\begin{equation}\label{diagram:compatibility_condition_of_2_morphisms_of_extensions_bis}
\begin{tabular}{l}
\xymatrix{(LF_2)I \ar@2[r]^{\beta_2*I} \ar@2[dr]^{\ass}& (G_2J)I \ar@2[r]^{\ass} & G_2(JI) \ar@{}[d]|{\kesir{\rotatebox{270}{$\Rra$}}{\Phi_2}}\ar@2[r]^{G_2*\varepsilon} & G_20 \ar@2[dr]^{\mu_{G_2}} &\\
(LF_1)I \ar@{}[r]|{\cong} \ar@2[u]^{(L*\delta)*I} \ar@2[d]_{\ass} & L(F_2I)  \ar@{}[d]|{\kesir{\rotatebox{270}{$\Rra$}}{L*\Psi}} \ar@2[r]^{L*\alpha_2} &L(KH_2) \ar@{}[d]|{\cong} \ar@2[r]^{\invass}& (LK)H_2  \ar@{}[d]|{\cong} \ar@2[r]^{\varsigma*H_2}& 0H_2\\
L(F_1I) \ar@2[r]_{L*\alpha_1} \ar@2[ur]|{L*(\delta*I)}& L(KH_1) \ar@2[r]_{\invass} \ar@2[ur]|{L*(K*\gamma)}& (LK)H_1 \ar@2[r]_{\varsigma*H_1} \ar@2[ur]|{(LK)*\gamma}& 0H_1 \ar@2[ur]_{0*\gamma}&}
\end{tabular}
\end{equation}
\end{definition} 
In the diagrams above the symbol $\cong$ inside a cell means that the cell commutes up to a natural modification of morphisms of additive 2-functors explained in Example \ref{example}.

Let $(\gamma, \delta, \epsilon, \Psi, \Omega)$ and $(\gamma',\delta', \epsilon',\Psi', \Omega')$ be two 2-morphisms of extensions from $(H_1, F_1, G_1,\alpha_1,\beta_1,\Phi_1)$ to $(H_2,F_2,G_2, \alpha_2,\beta_2,\Phi_2)$.

\begin{definition}\label{def:3morextpic2stacks}
A \emph{3-morphism of extensions} 
$(\gamma, \delta, \epsilon, \Psi, \Omega)  \Rra (\gamma',\delta', \epsilon',\Psi', \Omega')$
is given by three modifications of morphisms of additive 2-functors $\Gamma: \gamma \Rra \gamma'$, $\Delta:\delta \Rra \delta'$, and $\Upsilon: \epsilon \Rra \epsilon'$ satisfying the equation
\begin{equation}\label{diagram:compatibility_condition_of_3_morphisms_of_extensions}
\begin{tabular}{c}
\xymatrix{&F_1I \ar@2[r]^{\alpha_1} \ar@{}[dr]|{\kesir{\rotatebox{45}{$\Rra$}}{\Psi'}}\ar@2[d]^{\scriptsize{\rotatebox{90}{$\delta' *I$}}}_{\kesir{\Rra}{\Delta*I}} \ar@2@/_0.75cm/[d]_{\scriptsize{\rotatebox{90}{$\delta *I$}}} & KH_1 \ar@2[d]^{K * \gamma'} \ar@{}[dr]|{=}&F_1I \ar@{}[dr]|{\kesir{\rotatebox{45}{$\Rra$}}{\Psi}} \ar@2[r]^{\alpha_1} \ar@2[d]_{\delta * I} & KH_1 \ar@2[d]_{\scriptsize{\rotatebox{90}{$K * \gamma$}}} \ar@2@/^0.75cm/[d]^{\scriptsize{\rotatebox{90}{$K * \gamma'$}}}_{\kesir{\Rra}{K*\Gamma}}&\\ &F_2I \ar@2[r]_{\alpha_2}&KH_2&F_2I \ar@2[r]_{\alpha_2}&KH_2&&&}
\end{tabular}
\end{equation}
and a similar equation between the modifications $\Omega$, $\Omega'$, $\Delta$, and $\Upsilon$.
\end{definition}

\begin{def-prop}\label{def-prop:2-groupoid_of_extensions}
Let $\twoE=(I,\twoE,J,\varepsilon)$ be an extension of $\twoA$ by $\twoB$ and let $\twoF=(K,\twoF,L,\varsigma)$ be an extension of $\twoC$ by $\twoD$. Then the $\ES$-2-stack $\HomExt(\twoE,\twoF)$ whose
\begin{itemize}
	\item objects are morphisms of extensions from $\twoE$ to $\twoF$;
	\item 1-arrows are 2-morphisms of extensions;
	\item 2-arrows are 3-morphisms of extensions;
\end{itemize}
is a 2-groupoid, called the 2-groupoid of morphisms of extensions from $\twoE$ to $\twoF$. 
\end{def-prop}

\begin{proof} The proof is left to the reader.
\end{proof}

The extensions of Picard 2-stacks over $\ES$ form a 3-category $\Ext_{2\PICARD}$ where objects are extensions of Picard 2-stacks and where the hom-2-groupoid of two extensions $\twoE$ and $\twoF$ is $\HomExt(\twoE,\twoF)$. For any two Picard 2-stacks $\twoA$ and $\twoB$, we denote by $\Ext(\twoA,\twoB)$ the 3-category of extensions of $\twoA$ by $\twoB$.

\section{Extensions of length 3 complexes via fractions}
Let $A$ and $B$ be complexes of $\twocatT$.

\begin{definition}\label{def:extension_via_fractions}
An \emph{extension} $E=(i,E,j,R)$ of $A$ by $B$ in the tricategory $\twocatT$ consists of 
\begin{itemize}
	\item a complex $E$ of $\twocatT$;
	\item two fractions $i=(q_i,M,p_i)$ from $B$ to $E$ and $j=(q_j,N,p_j)$ from $E$ to $A$ of $\twocatT$;
	\item a 1-arrow of fractions $R=(r,R,r'):j \fcirc i \Ra 0$ between $j \fcirc i$ and the trivial fraction 0;
\end{itemize}
such that the following equivalent conditions are satisfied:
\begin{enumerate}[(a)]
\item ${\mathrm{H}}^{0}(p_j) \circ ({\mathrm{H}}^{0}(q_j))^{-1}: {\mathrm{H}}^{0}(E) \ra {\mathrm{H}}^{0}(A)$ is surjective and $i$ induces a quasi-isomorphism between $B$ and $ \tau_{\leq 0} (\MC(p_j)[-1])$;
\item  ${\mathrm{H}}^{-2}(p_i)\circ ({\mathrm{H}}^{-2}(q_i))^{-1} \colon {\mathrm{H}}^{-2}(B) \ra {\mathrm{H}}^{-2}(E)$ is injective and $j$ induces a quasi-isomorphism between $ \tau_{\geq -2}(\MC(p_i))$ and $A$.
\end{enumerate}
\end{definition}

Let $(i,E,j,R)$ be an extension of $A$ by $B$ and $(k,F,l,S)$ be extension of $C$ by $D$ with $i=(q_i,M,p_i)$, $j=(q_j,N,p_j)$, $k=(q_k,K,p_k)$, and $l=(q_l,L,p_l)$.

\begin{definition}\label{def:morphism_of_extensions_via_fractions}
A \emph{morphism of extensions} $(i,E,j,R) \ra (k,F,l,S)$ is given by the collection $(f,g,h,T,U,\omega)$ where
\begin{itemize}
	\item $f=(q_f,Q_f,p_f) \colon E \ra F$, $g=(q_g,Q_g,p_g) \colon A \ra C$, and $h=(q_h,Q_h,p_h) \colon B \ra D$ are fractions; 
	\item $T=(t,T,t'):f \fcirc i \Ra k \fcirc h$ and $U=(u,U,u') \colon l \fcirc f \Ra g \fcirc j$ are 1-arrows of fractions
\begin{equation*}\label{diagram:morphisms_of_extensions_2_bis}
\begin{tabular}{c}
		\xymatrix@C=0.6cm@R=0.4cm{B \ddrrtwocell<\omit>{\mspace{10mu}T}& M \ar[r]^{p_i} \ar[l]_{q_i} & E & N \ar[r]^{p_j} \ar[l]_{q_j} & A\\
				Q_h \ar[u]^{q_h}\ar[d] _{p_h}&& Q_f \ar[u]_{q_f} \ar[d]^{p_f} && Q_g \ar[u]_{q_g}\ar[d]^{p_g} \\
				D & K \ar[r]_{p_k} \ar[l]^{q_k} & F & L \ar[r]_{p_l} \ar[l]^{q_l}& C \uulltwocell<\omit>{\mspace{-10mu} U}}
\end{tabular}
\end{equation*}
	\item $\omega$ is a 2-arrow of fractions from the pasting of the 1-arrows of fractions $(s,S,s')$, $(u,U,u')$, and $(t,T,t')$ to the 1-arrow of fraction $(r,R,r')$
\begin{equation}\label{morphism_of_extensions_fractions}
\begin{tabular}{c}
\xymatrix@R=1.25cm{ l(fi) \ar@2[r]^{\ass} \ar@2[d]_{l * T} &(lf)i \ar@2[r]^{U*i} \ar@{}[dr]|{\kesir{\rotatebox{45}{$\Rra$}}{\omega}}& (gj)i \ar@2[r]^{\invass}& g(ji) \ar@2[d]^{g * R}\\l(kh) \ar@2[r]_{\ass} & (lk)h \ar@2[r]_{S*h} & 0_Dh \ar@2[r]_{\mu_h}& g0_B}
\end{tabular}
\end{equation}
where $0_B=(\id_B,B,0) \colon B \ra A$, $0_D=(\id_D,D,0) \colon D \ra C$, and $\mu_h$ is the 1-arrow of fractions given by triple $(\id_{Q_h},Q_h,q_h)$. 
\end{itemize}
\end{definition}

Let  $(i,E,j,R)$ be an extension of $A$ by $B$ and $(k,F,l,S)$ be an extension of $C$ by $D$. Let $(f_1,g_1,h_1,T_1,U_1,\omega_1)$ and $(f_2,g_2,h_2,T_2,U_2,\omega_2)$ be two morphisms of extensions from $(i,E,j,R)$ to $(k,F,l,S)$.

\begin{definition}\label{def:2-morphism_of_extensions_via_fractions}
A \emph{2-morphism of extensions} 
$(f_1,g_1,h_1,T_1,U_1,\omega_1) \Ra (f_2,g_2,h_2,T_2,U_2,\omega_2)$ 
is given by the collection $(X_f,X_g,X_h,\sigma,\tau)$ where
\begin{itemize}
\item $X_f=(x_f,X_f,x'_f): f_1 \Rightarrow f_2$, $X_g=(x_g,X_g,x'_g) : g_1 \Rightarrow g_2$, and $X_h=(x_h,X_h,x'_h): h_1 \Rightarrow h_2 $ are 1-arrows of fractions;
\item $\sigma$ and $\tau$ are 2-arrows of fractions
\begin{equation*}
\begin{tabular}{cc}
\xymatrix{f_1i \ar@2[r]^{T_1} \ar@2[d]_{X_f * i} & kh_1 \ar@2[d]^{k * X_h}\\f_2i \ar@2[r]_{T_2}& kh_2 \ar@{}[ul]|{\kesir{\rotatebox{45}{$\Rra$}}{\sigma}}}
&
\xymatrix{lf_1 \ar@2[r]^{U_1} \ar@2[d]_{l * X_f} & g_1j \ar@2[d]^{X_g*j}\\lf_2 \ar@2[r]_{U_2}& g_2j \ar@{}[ul]|{\kesir{\rotatebox{45}{$\Rra$}}{\tau}}}
\end{tabular}
\end{equation*}
 such that $\sigma$, $\tau$, $\omega_1$, and $\omega_2$ satisfy a compatibility condition which can be obtained from diagrams analog to (\ref{diagram:compatibility_condition_of_2_morphisms_of_extensions}) and (\ref{diagram:compatibility_condition_of_2_morphisms_of_extensions_bis}).
 \end{itemize}
\end{definition}

Let $(X_f,X_g,X_h,\sigma,\tau)$ and $(Y_f,Y_g,Y_h,\mu,\nu)$ be two 2-morphisms of extensions.

\begin{definition}\label{def:3-morphism_of_extensions_via_fractions}
A \emph{3-morphism of extensions}
$(X_f,X_g,X_h,\sigma,\tau) \Rra (Y_f,Y_g,Y_h,\mu,\nu)$
is given by three 2-arrows of fractions $\alpha:X_f \Rra Y_f$, $\beta:X_g \Rra Y_g$, and $\gamma:X_h \Rra Y_h$ (i.e. isomorphisms) such that all regions in the following diagrams commute
\begin{equation*}\label{diagram:3-morphisms_of_extensions_via_fractions_1}
\begin{tabular}{ccc}
\xymatrix@C=0.001cm@R=5pt{&&&&& Q_{f_1} \ar[ddrrrrr]^(0.6){p_{f_1}} \ar[ddlllll]_(0.6){q_{f_1}}&&&&&\\
          &&&&&&\\
          E &&&& X_f \ar@{-->}[rr]^{\alpha} \ar[uur] \ar[ddr] && Y_f \ar[uul] \ar[ddl] &&&& F\\
          &&&&&&&&\\
          &&&&&Q_{f_2} \ar[uurrrrr]_(0.6){p_{f_2}} \ar[uulllll]^(0.6){q_{f_2}}&&&&&}
          &
\xymatrix@C=0.001cm@R=5pt{&&&&& Q_{g_1} \ar[ddrrrrr]^(0.6){p_{g_1}} \ar[ddlllll]_(0.6){q_{g_1}}&&&&&\\
          &&&&&&\\
          A &&&& X_g \ar@{-->}[rr]^{\beta} \ar[uur] \ar[ddr] && Y_g \ar[uul] \ar[ddl] &&&& C\\
          &&&&&&&&\\
          &&&&&Q_{g_2} \ar[uurrrrr]_(0.6){p_{g_2}} \ar[uulllll]^(0.6){q_{g_2}}&&&&&}
          &
\xymatrix@C=0.001cm@R=5pt{&&&&& Q_{h_1} \ar[ddrrrrr]^(0.6){p_{h_1}} \ar[ddlllll]_(0.6){q_{h_1}}&&&&&\\
          &&&&&&\\
          B &&&& X_h \ar@{-->}[rr]^{\gamma} \ar[uur] \ar[ddr] && Y_h \ar[uul] \ar[ddl] &&&& D\\
          &&&&&&&&\\
          &&&&&Q_{h_2} \ar[uurrrrr]_(0.6){p_{h_2}} \ar[uulllll]^(0.6){q_{h_2}}&&&&&}
\end{tabular}
\end{equation*}
and such that $\alpha$, $\beta$, $\gamma$, $\sigma$, $\tau$, $\mu$, $\nu$ satisfy the compatible conditions which are given by a commutative diagram of 3-arrows analog to (\ref{diagram:compatibility_condition_of_3_morphisms_of_extensions}).
\end{definition}

As for extensions of Picard 2-stacks we have the following Proposition whose proof is left to the reader:

\begin{def-prop}\label{def-prop:bigroupoid_of_extensions_via_fractions}
Let $E=(i,E,j,R)$ be an extension of $A$ by $B$ and $F=(k,F,l,S)$ be an extension of $C$ by $D$. Then the $\ES$-2-stack  $\HomExt(E,F)$ whose
\begin{itemize}
	\item objects are morphisms of extensions from $E$ to $F$;
	\item 1-arrows are 2-morphisms of extensions;
	\item 2-arrows are 3-morphisms of extensions;
\end{itemize}
is a bigroupoid, called the bigroupoid of morphisms of extensions from $E$ to $F$.
\end{def-prop}

The extensions of length 3 complexes in $\twocatT$ form a tricategory $\Ext_{\twocatT}$ where
 objects are extensions of length 3 complexes in $\twocatT$ and
 where the hom-bigroupoid of two extensions $E$ and $F$ is $\HomExt(E,F)$.
For any two length 3 complexes $A$ and $B$ of $\twocatT$, we denote by $\Ext(A,B)$ the tricategory of extensions of $A$ by $B$.

\begin{remark}\label{rem:exact-ext}
Let $E=(i,E,j,R)$ be an extension of $A$ by $B$ with $i=(q_i,M,p_i)$ and $j=(q_j,N,p_j)$. The morphism of complexes $p_j \colon N \ra A$ can be completed into a distinguished triangle $\MC(p_j)[-1]  \ra N \stackrel{p_j} \ra A \ra +$ which is isomorphic to $B  \stackrel{i } \ra E \stackrel{j } \ra A \ra +$ in $\der$. Similarly, the morphism of complexes $p_i \colon M \ra E$ can be completed into a distinguished triangle $M \stackrel{p_i} \ra E \ra \MC(p_i) \ra +$ which is isomorphic to $B  \stackrel{i } \ra E \stackrel{j} \ra A \ra +$ in $\der$.
\end{remark} 

As an immediate consequence of the above Definitions we have

\begin{proposition}\label{proposition:triequivalence_of_extensions}
The triequivalence $2\st$ induces a triequivalence between $\Ext_{2\PICARD}$ and $\Ext_{\twocatT}$. 
\end{proposition}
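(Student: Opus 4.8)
The plan is to show that the triequivalence $2\st \colon \twocatT \ra 2\PICARD$ of Theorem \ref{thm:tatar} restricts to a trifunctor $\Ext_{\twocatT} \ra \Ext_{2\PICARD}$ and that this restriction is again a triequivalence in the sense of the Notation section, i.e.\ that it is essentially surjective on objects and induces biequivalences on the hom-bigroupoids. The guiding observation is that every piece of data in Definitions \ref{def:extension_via_fractions}--\ref{def:3-morphism_of_extensions_via_fractions} is assembled from the tricategorical structure of $\twocatT$ (objects, fractions, 1-arrows and 2-arrows of fractions, the fraction composition $\fcirc$, and the constraints $\ass$, $\invass$), and that this structure was set up precisely as the algebraic image under $2\st$ of the $2\PICARD$-structure used in Definitions \ref{def:extpic2stacks}--\ref{def:3morextpic2stacks}. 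Thus the work is to verify that $2\st$ carries each defining datum and each coherence condition on one side onto the corresponding one on the other.

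First I would check that $2\st$ sends an extension $(i,E,j,R)$ of $A$ by $B$ to an extension $(2\st(i),2\st(E),2\st(j),2\st(R))$ of $2\st(A)$ by $2\st(B)$. By Theorem \ref{thm:tatar} the fractions $i,j$ go to additive 2-functors and the 1-arrow of fractions $R \colon j\fcirc i \Ra 0$ goes to a morphism of additive 2-functors $2\st(j)\circ 2\st(i) \Ra 0$, so the underlying data match those of Definition \ref{def:extpic2stacks}. It then remains to transport condition (a) (resp.\ (b)) of Definition \ref{def:extension_via_fractions} to the first (resp.\ second) condition of Definition \ref{def:extpic2stacks}. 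Here I invoke the dictionary $\pi_i(2\st(A)) = \mathrm{H}^{-i}(A)$: surjectivity of $\mathrm{H}^0(p_j)\circ(\mathrm{H}^0(q_j))^{-1}$ becomes surjectivity of $\pi_0(2\st(j))$, and injectivity of $\mathrm{H}^{-2}(p_i)\circ(\mathrm{H}^{-2}(q_i))^{-1}$ becomes injectivity of $\pi_2(2\st(i))$. The quasi-isomorphisms $B \simeq \tau_{\leq 0}(\MC(p_j)[-1])$ and $\tau_{\geq -2}(\MC(p_i)) \simeq A$ become the equivalences $2\st(B) \simeq \twoKer(2\st(j))$ and $\twoCo(2\st(i)) \simeq 2\st(A)$ by part (2) of the Lemma identifying $\twoKer$ and $\twoCo$ of an additive 2-functor with $2\st$ applied to the truncated mapping cones of the numerator of the corresponding fraction.

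Next, for the higher arrows I would observe that a morphism of extensions in $\twocatT$ is a triple of fractions $f,g,h$, a pair of 1-arrows of fractions $T,U$, and a 2-arrow of fractions $\omega$ filling diagram (\ref{morphism_of_extensions_fractions}), whereas a morphism of extensions in $2\PICARD$ is a triple of additive 2-functors, a pair of morphisms of additive 2-functors, and a modification filling diagram (\ref{morphism_of_extensions}). Since $2\st$ sends fractions, 1-arrows of fractions, and 2-arrows of fractions respectively to additive 2-functors, morphisms of additive 2-functors, and modifications, and is compatible with $\fcirc$ (taken to composition of additive 2-functors) and with $\ass$, it carries diagram (\ref{morphism_of_extensions_fractions}) exactly onto diagram (\ref{morphism_of_extensions}). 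The same reasoning applied to the compatibility diagrams for 2-morphisms and 3-morphisms shows that $2\st$ induces, for each pair of extensions $E,F$, a functor $\HomExt(E,F) \ra \HomExt(2\st(E),2\st(F))$ between the bigroupoids of Definition-Propositions \ref{def-prop:bigroupoid_of_extensions_via_fractions} and \ref{def-prop:2-groupoid_of_extensions}. Essential surjectivity of the induced trifunctor then follows from the essential surjectivity of $2\st$ together with the first step, while the induced maps on hom-bigroupoids are biequivalences because $2\st$ already induces biequivalences $\Frac(K,L) \ra \Hom_{2\PICARD}(2\st(K),2\st(L))$ and the $\HomExt$ are assembled from these same $\Frac$'s by identical compatibility constraints.

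I expect the main obstacle to be organizational rather than conceptual. Definition \ref{def:2-morphism_of_extensions_via_fractions} only refers to ``diagrams analog to'' the 2-stack pasting diagrams (\ref{diagram:compatibility_condition_of_2_morphisms_of_extensions}) and (\ref{diagram:compatibility_condition_of_3_morphisms_of_extensions}), so the genuine content is to confirm, with enough precision, that $2\st$ interchanges these large compatibility conditions and, in doing so, respects the coherence modifications labelled $\cong$ that arise from the Picard structure of Example \ref{example}. Once one checks that $2\st$ is compatible with precisely these coherence data, the two compatibility conditions are truly exchanged and not merely formally similar, and the triequivalence follows.
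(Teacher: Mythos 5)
Your proposal is correct and follows essentially the same route as the paper: the paper states this proposition as an immediate consequence of the definitions (Definitions \ref{def:extension_via_fractions}--\ref{def:3-morphism_of_extensions_via_fractions} were designed as the images under $[\,\,]$ of Definitions \ref{def:extpic2stacks}--\ref{def:3morextpic2stacks}), offering no written proof at all. Your write-up simply makes explicit the verification the paper leaves implicit --- the dictionary $\pi_i(2\st(A))=\mathrm{H}^{-i}(A)$, the identification of $\twoKer$ and $\twoCo$ with truncated mapping cones, and the transport of the coherence diagrams --- which is exactly what the authors intend by ``immediate consequence.''
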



\section{Operations on extensions of Picard 2-stacks}\label{Baer_sum_of_extensions}

Let $\twoE=(I,\twoE,J,\varepsilon)$ be an extension of the Picard 2-stack $\twoA$ by the Picard 2-stack $\twoB$ and let $G:\twoA' \ra \twoA$ be an additive 2-functor. Recall that we denote by $\boldsymbol 0$ the Picard 2-stack whose only object is the unit object and whose only 1- and 2-arrows are identities.

\begin{definition} The \emph{pull-back $G^*\twoE$ of the extension $\twoE$ via the additive 2-functor $G:\twoA' \ra \twoA$} is the fibered product $\twoE \times_\twoA \twoA'$ of $\twoE$ and $\twoA'$ over $\twoA$ via $J:\twoE \ra \twoA$ and $G:\twoA' \ra \twoA$.
\end{definition}

\begin{lemma} \label{lemma:pull-back}
The pull-back $G^*\twoE $ of $\twoE$ via $G:\twoA' \ra \twoA$ is an extension of $\twoA'$ by $\twoB.$
\end{lemma}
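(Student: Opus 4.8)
The plan is to exhibit explicit data turning the fibered product $G^*\twoE = \twoE \times_\twoA \twoA'$ into an extension of $\twoA'$ by $\twoB$, and then to verify one of the two equivalent conditions of Definition \ref{def:extpic2stacks} by identifying the relevant homotopy kernel. First I would produce the structural $2$-functors. Put $J' := \pr_2 \colon \twoE \times_\twoA \twoA' \ra \twoA'$. To build $I' \colon \twoB \ra \twoE \times_\twoA \twoA'$ I apply the universal property of the fibered product (stated just after Definition \ref{def:fiberedproduct}) to the additive $2$-functors $I \colon \twoB \ra \twoE$ and $0 \colon \twoB \ra \twoA'$, equipped with the $2$-transformation $G \circ 0 \stackrel{\mu_G}{\Ra} 0 \stackrel{\varepsilon^{-1}}{\Ra} J \circ I$, where $\mu_G$ is the canonical comparison of Definition \ref{def:morextpic2stacks} and $\varepsilon^{-1}$ exists because morphisms of additive $2$-functors are invertible up to modification. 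This yields $I'$ together with morphisms $\gamma_1 \colon \pr_1 \circ I' \Ra I$ and $\gamma_2 \colon \pr_2 \circ I' \Ra 0$. I then set $\varepsilon' := \gamma_2 \colon J' \circ I' \Ra 0$, so that $(I', G^*\twoE, J', \varepsilon')$ is the candidate extension.

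Next I would verify the first condition of Definition \ref{def:extpic2stacks}, namely that $\pi_0(J')$ is an epimorphism of sheaves and that $I'$ induces an equivalence $\twoB \simeq \twoKer(J')$; the equivalence is the heart of the argument. By Definition \ref{def:homotopy_kernel}, $\twoKer(\pr_2) = (\twoE \times_\twoA \twoA') \times_{\twoA'} \boldsymbol 0$, and I would compute it by pasting the two homotopy-pullback squares
\[
\begin{array}{ccc}
\twoKer(\pr_2) & \lra & \boldsymbol 0 \\
\downarrow & & \downarrow \\
\twoE \times_\twoA \twoA' & \stackrel{\pr_2}{\lra} & \twoA' \\
{\scriptstyle \pr_1}\downarrow & & \downarrow{\scriptstyle G} \\
\twoE & \stackrel{J}{\lra} & \twoA .
\end{array}
\]
The lower square is the defining fibered product and the upper one is the defining kernel; the pasting law for homotopy pullbacks then identifies the outer rectangle with the homotopy pullback of $J$ along the neutral $\boldsymbol 0 \ra \twoA$ (using that the additive $G$ carries the neutral object to the neutral object). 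Hence $\twoKer(\pr_2) \simeq \twoE \times_\twoA \boldsymbol 0 = \twoKer(J)$. Since $\pr_2 \circ I' \simeq 0$ via $\gamma_2$, the $2$-functor $I'$ factors through $\twoKer(\pr_2)$, and under the identification above this factorization is precisely the equivalence $\twoB \simeq \twoKer(J)$ induced by $I$ in the hypothesis that $\twoE$ is an extension; thus $I'$ induces an equivalence $\twoB \simeq \twoKer(J')$.

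For the epimorphism claim I would argue locally: given a local object $Y$ of $\twoA'$ over some $U$, its image $G(Y)$ is an object of $\twoA$; surjectivity of $\pi_0(J)$ produces, locally on $U$, an object $Z$ of $\twoE$ together with a $1$-arrow $l \colon J(Z) \ra G(Y)$, whence $(Z,l,Y)$ is an object of $(\twoE \times_\twoA \twoA')(U)$ with $\pr_2(Z,l,Y)=Y$. So every local section of $\pi_0(\twoA')$ lifts locally, i.e. $\pi_0(\pr_2)=\pi_0(J')$ is an epimorphism of sheaves. Together with the previous paragraph this establishes both halves of the condition, and therefore $G^*\twoE$ is an extension of $\twoA'$ by $\twoB$.

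The main obstacle is the pasting law for homotopy pullbacks, together with the bookkeeping of the comparison $2$-transformations and modifications needed to make the identification $\twoKer(\pr_2) \simeq \twoKer(J)$ genuinely compatible with $I'$ and $I$: this is a standard homotopy-theoretic fact but delicate to make precise in the $2$-stack setting. An alternative that sidesteps it is to transport the whole situation to $\twocatT$ through Proposition \ref{proposition:triequivalence_of_extensions}: via Proposition \ref{lemma:torsor_fibered_product} the pull-back corresponds to a complex of the form $\tau_{\leq 0}(\MC(\,\cdot\,)[-1])$, and one checks conditions (a)/(b) of Definition \ref{def:extension_via_fractions} directly on mapping cones, where the identification of kernels becomes the octahedral axiom.
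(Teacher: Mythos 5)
Your proposal is correct and follows essentially the same route as the paper: construct $I'$ by applying the universal property of the fibered product to $I$, the null $2$-functor, and a $2$-cell built from $\varepsilon$ (and $\mu_G$); identify the homotopy kernel of the projection to $\twoA'$ with $\twoKer(J)\simeq\twoB$ via the pasting/associativity of fibered products, i.e. $(\twoE\times_{\twoA}\twoA')\times_{\twoA'}\boldsymbol 0\cong\twoE\times_{\twoA}\boldsymbol 0$; and transfer surjectivity of $\pi_0(J)$ to the projection. Your explicit local-lifting argument for the $\pi_0$ surjectivity and your remark that the factorization of $I'$ through the kernel must be matched with the equivalence induced by $I$ merely spell out steps the paper asserts more tersely.
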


\begin{proof} 
Let $G^*\twoE=(G^*\twoE, \pr_1, \pr_2, \pi_G)$ be the pull-back of $\twoE$ via $G$ and $J$, with $\pr_1 \colon G^*\twoE \ra \twoA'$ and $\pr_2 \colon G^*\twoE \ra \twoE$ the underlying additive 2-functors and $\pi_G \colon J \circ \pr_2 \Ra G \circ \pr_1$ the underlying morphism of additive 2-functor. From the morphism of additive 2-functors $\varepsilon: J \circ I \Ra 0$ we get the morphism of additive 2-functors
\begin{equation*}
\begin{tabular}{c}
\xymatrix{\twoB \ar[r]^{0} \ar[d]_{I}  &  \twoA' \ar[d]^{G}\\ \twoE \ar[r]_{J} & \twoA \ultwocell<\omit>{\mspace{-10 mu}\varepsilon_G}}
\end{tabular}
\end{equation*}
Therefore according to the universal property of the pull-back, there exists a 4-tuple $(I',\gamma_1,\gamma_2,\Theta)$ consisting of an additive 2-functor $I': \twoB \ra G^*\twoE$, two morphisms of additive 2-functors $\gamma_1 \colon \pr_1 \circ I' \Ra 0$ and $\gamma_2 \colon \pr_2 \circ I' \Ra I$, and a modification of morphisms of additive 2-functors  $\Theta$
\begin{equation*}\label{diagram:pull_back_extension_1}
\begin{tabular}{c}
\xymatrix{(J\pr_2)I' \ar@2[r]^{\ass} \ar@2[d]_{\pi_G*I'} & J(\pr_2I') \ar@{}[d]|{\kesir{\rotatebox{45}{$\Rra$}}{\Theta}}\ar@2[r]^(0.55){J *\gamma_2}& JI \ar@2[d]^{\varepsilon_G}\\ (G\pr_1)I' \ar@2[r]_{\ass}& G(\pr_1I') \ar@2[r]_(0.55){G * \gamma_1} & G0}
\end{tabular}
\end{equation*}
Moreover by composing the equivalence of Picard 2-stacks $\twoB \cong \twoKer(J)= \twoE \times_\twoA \boldsymbol 0$ with the natural equivalence of Picard 2-stacks $ \twoE \times_\twoA \boldsymbol 0 \cong \twoE \times_{\twoA}  \twoA'  \times_{\twoA'} \boldsymbol 0 = \twoKer(\pr_1)$, we get that $\twoB$ is equivalent to the Picard 2-stack $\twoKer(\pr_1)$. Finally the surjectivity of $\pi_0(J): \pi_0(\twoE) \ra \pi_0(\twoA)$ implies the surjectivity of  $\pi_0(\pr_1): \pi_0(G^*\twoE) \ra \pi_0(\twoA') $. Hence $(I',G^*\twoE,\pr_1,\gamma_1) $ is an extension of $\twoA'$ by $\twoB$.
\end{proof}

The dual notion of pull-back of an extension is the push-down of an extension. Let $\twoE=(I,\twoE,J,\varepsilon)$ be an extension of $\twoA$ by $\twoB$ and let $F:\twoB \rightarrow \twoB'$ be an additive 2-functor.

\begin{definition} The \emph{push-down $F_*\twoE$ of the extension $\twoE$ via the additive 2-functor $F:\twoB \rightarrow \twoB'$} is the fibered sum $\twoE +^\twoB \twoB'$ of $\twoE$ and $\twoB'$ under $\twoB$ via $F:\twoB \rightarrow \twoB'$ and $I: \twoB \ra \twoE$.
\end{definition}

Dualizing the proofs done for the pull-back of an extension, we get that the push-down $F_*\twoE $ of the extension $\twoE$ via $F:\twoB \rightarrow \twoB'$ is an extension of $\twoA$ by $\twoB'$ which is endowed with a universal property.

Now we can define the group law for extensions of $\twoA$ by $\twoB$ using pull-back and push-down of extensions.
Let $\twoE$ and $\twoE'$ be two extensions of $\twoA$ by $\twoB$. Remark that $\twoE \times \twoE'$ is an extension of $\twoA \times \twoA$ by $\twoB \times \twoB.$

\begin{definition} \label{def:sum} 
 The \emph{sum $\twoE + \twoE'$ of the extensions $\twoE$ and $\twoE'$} is the following extension of $\twoA$ by $\twoB$
\begin{equation}\label{eq:+}
D_{\twoA}^* (\otimes_\twoB)_* ( \twoE \times \twoE'),
\end{equation}
where $D_{\twoA}: \twoA \ra \twoA \times \twoA $ is the diagonal additive 2-functor of $\twoA$ and $\otimes_{\twoB}: \twoB \times \twoB \ra \twoB$ is the morphism of 2-stacks underlying the Picard 2-stack $\twoB$ (i.e. $\otimes_{\twoB}$ is the group law of $\twoB$).
\end{definition}

\begin{proposition}\label{proposition:sumofext}
The sum given in Definition \ref{def:sum} equipes the set $\Ext^1(\twoA,\twoB)$ of equivalence classes of extensions of $\twoA$ by $\twoB$ with an abelian group law, where the neutral element is the equivalence class of the extension $\twoA \times \twoB$, and the inverse of an equivalence class $\twoE$ is the equivalence class of $-\twoE= (-\id_{\twoB})_*\twoE$.
\end{proposition}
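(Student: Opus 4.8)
The plan is to reduce the four group-law axioms (well-definedness on equivalence classes, commutativity, associativity, and existence of a neutral element and of inverses) to a short list of \emph{calculus lemmas} relating the product, the pull-back and the push-down of extensions, and then to read off each axiom from the corresponding structural datum of the Picard $2$-stack $\twoB$ (its unit, its associativity constraint $\ass$, its commutativity constraint $\comm$, and its inverses) together with the symmetry and (co)diagonal properties of $\twoA$. Since $\Ext^1(\twoA,\twoB)$ is a \emph{set} of equivalence classes, I only need these lemmas as equivalences of extensions, i.e.\ as morphisms of extensions inducing the identity on $\twoA$ and on $\twoB$ in the sense of Definition \ref{def:equivalenceofext}; no higher coherence of the comparison maps ever enters, which is what makes the argument manageable in the $2$-stack setting. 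Concretely I would first establish: (i) \emph{functoriality}, $(\id_\twoA)^{*}\twoE\cong\twoE$ and $(G\circ G')^{*}\twoE\cong G'^{*}(G^{*}\twoE)$, together with the push-down duals $(\id_\twoB)_*\twoE\cong\twoE$ and $(F'\circ F)_*\twoE\cong F'_*(F_*\twoE)$; (ii) \emph{interchange}, $G^{*}(F_*\twoE)\cong F_*(G^{*}\twoE)$ for $G$ acting on the base and $F$ on the fibre; (iii) \emph{product compatibility}, $(F_1\times F_2)_*(\twoE_1\times\twoE_2)\cong (F_1)_*\twoE_1\times (F_2)_*\twoE_2$ and its pull-back dual; (iv) the \emph{diagonal lemma} $D_\twoA^{*}(\twoE\times\twoE)\cong (D_\twoB)_*\twoE$, with $D_\twoB\colon\twoB\ra\twoB\times\twoB$ the diagonal; and (v) \emph{triviality along zero}, $(0_\twoB)_*\twoE\cong\twoA\times\twoB$. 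Each of these is produced from the universal property of the fibered product recalled in Section 2 and of the fibered sum (Appendix \ref{AnnexB}) exactly as in the proof of Lemma \ref{lemma:pull-back}, by exhibiting the comparison additive $2$-functor, the two mediating morphisms of additive $2$-functors, and the connecting modification; equivalently, transporting through the triequivalence $2\st$ of Theorem \ref{thm:tatar} they become identities of truncated mapping cones, checkable on complexes by Proposition \ref{lemma:torsor_fibered_product}. Lemma \ref{lemma:pull-back} and its push-down dual already guarantee that all these constructions land again in $\Ext(\twoA,\twoB)$.

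Granting these lemmas, well-definedness follows since product, pull-back and push-down all carry equivalences of extensions to equivalences of extensions, so the sum of Definition \ref{def:sum} descends to a binary operation on $\Ext^1(\twoA,\twoB)$. For \emph{commutativity} I would use the symmetry equivalence $\twoE\times\twoE'\cong\twoE'\times\twoE$, compatible with the swap $2$-functors $s$ on $\twoA\times\twoA$ and on $\twoB\times\twoB$, together with the identities $s\circ D_\twoA\cong D_\twoA$ (the diagonal is symmetric) and $\otimes_\twoB\circ s\cong\otimes_\twoB$ (the commutativity constraint $\comm$ of $\twoB$), to get
\[ \twoE+\twoE'=D_\twoA^{*}(\otimes_\twoB)_*(\twoE\times\twoE')\cong D_\twoA^{*}(\otimes_\twoB)_*(\twoE'\times\twoE)=\twoE'+\twoE. \]

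For the \emph{neutral element} I would rewrite the trivial extension via (v) as $\twoA\times\twoB\cong(0_\twoB)_*\twoE$ and then, using (iii), functoriality, and the unit constraint of $\twoB$ in the form $\otimes_\twoB\circ(\id_\twoB\times 0_\twoB)\cong\pr_1$, compute
\[ \twoE+(\twoA\times\twoB)\cong D_\twoA^{*}\big(\otimes_\twoB\circ(\id_\twoB\times 0_\twoB)\big)_*(\twoE\times\twoE)\cong D_\twoA^{*}(\pr_1)_*(\twoE\times\twoE)\cong(\pr_1)_*(D_\twoB)_*\twoE\cong(\pr_1\circ D_\twoB)_*\twoE\cong\twoE, \]
where the middle passage uses (ii) and (iv) and the last uses $\pr_1\circ D_\twoB\cong\id_\twoB$. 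The \emph{inverse} is entirely parallel: writing $m=\otimes_\twoB\circ(\id_\twoB\times(-\id_\twoB))$, product compatibility and functoriality give
\[ \twoE+(-\id_\twoB)_*\twoE\cong D_\twoA^{*}m_*(\twoE\times\twoE)\cong m_*(D_\twoB)_*\twoE\cong(m\circ D_\twoB)_*\twoE\cong(0_\twoB)_*\twoE\cong\twoA\times\twoB, \]
the crucial point being $m\circ D_\twoB\cong 0_\twoB$, which is precisely the existence of inverses in $\twoB$ (the object $X\otimes(-X)$ is canonically the unit). For \emph{associativity} I would flatten both $(\twoE_1+\twoE_2)+\twoE_3$ and $\twoE_1+(\twoE_2+\twoE_3)$, using functoriality, interchange and product compatibility, to the common normal form $(\otimes^{(3)}_\twoB)_*(D^{(3)}_\twoA)^{*}(\twoE_1\times\twoE_2\times\twoE_3)$, where $D^{(3)}_\twoA\colon\twoA\ra\twoA\times\twoA\times\twoA$ is the triple diagonal and $\otimes^{(3)}_\twoB$ the triple sum; the two bracketings of the triple sum are identified by the associativity constraint $\ass$ of $\twoB$, and the two bracketings of the triple diagonal agree canonically.

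The main obstacle is the verification of lemmas (ii)--(iv) in the weak, $2$-stack setting: the fibered product and fibered sum are defined only up to equivalence, and their universal properties involve modifications (the cells marked $\cong$ in Section 2, arising from the Picard structure explained in Example \ref{example}), so producing the comparison equivalences requires honest bookkeeping of $2$- and $3$-cells rather than plain equalities. The diagonal lemma (iv) is the least formal of the five, and I would prove it first on complexes through $2\st$, where $D_\twoA^{*}(\twoE\times\twoE)$ and $(D_\twoB)_*\twoE$ become two explicitly computable truncated mapping cones that one checks to be quasi-isomorphic by the fibre-product model of Proposition \ref{lemma:torsor_fibered_product}. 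Once (i)--(v) are in place the four axioms follow formally as above; I expect no difficulty in the final assembly, only in the coherence-free extraction of the comparison equivalences from the universal properties.
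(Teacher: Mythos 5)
Your proposal is correct, and at its core it follows the same route as the paper: the paper's proof of associativity is precisely your ``flattening to a normal form,'' carried out with the same three formal ingredients you isolate as (i)--(iii) (functoriality, interchange of pull-back and push-down, and product compatibility), all invoked there under the blanket phrase ``the universality of pull-back and push-down,'' and then closed off by the associativity constraint $\ass$ of $\twoB$ together with the strict identity $(D_\twoA \times \id_\twoA)\circ D_\twoA = (\id_\twoA \times D_\twoA)\circ D_\twoA$. Where you genuinely go further is on the remaining axioms: the paper declares commutativity ``clear from the formula,'' exhibits the neutral element as the product of the two evident extensions $\twoA\times\twoB$ without verifying $\twoE+(\twoA\times\twoB)\cong\twoE$, and never checks that $(-\id_{\twoB})_*\twoE$ is an inverse. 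Your diagonal lemma (iv), $D_\twoA^{*}(\twoE\times\twoE)\cong (D_\twoB)_*\twoE$, and zero lemma (v), $(0_{\twoB})_*\twoE\cong\twoA\times\twoB$, are exactly the missing tools that make those omitted verifications formal, and your derivations with them are sound; indeed the same style of manipulation, $D_{\twoA}^{*}(\otimes_{\twoB})_*(U\times U')^{*}(\twoK\times\twoK)\cong (U+U')^{*}(\twoK+\twoK)$, is used by the authors themselves in step (5) of the proof of the main theorem, so your calculus is consistent with theirs. One small caveat: in your commutativity display, $\twoE\times\twoE'\cong\twoE'\times\twoE$ is not an equivalence of extensions of $\twoA\times\twoA$ by $\twoB\times\twoB$ (it lies over the swap 2-functors), so the displayed chain is only shorthand for the argument you give in prose, which correctly routes through $s_\twoA\circ D_\twoA\cong D_\twoA$ and $\otimes_{\twoB}\circ s_\twoB\cong\otimes_{\twoB}$; this is the same point the paper compresses into ``clear from the formula.''
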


\begin{proof} 
 Associativity: Following the definition of the sum and using the universality of pull-back and push-down, we obtain
\begin{center}
\begin{tabular}{ccl}
$(\twoE_1 +\twoE_2) + \twoE_3$ &$=$& $D_{\twoA}^*(\otimes_{\twoB})_*\big[\big(D_{\twoA}^*(\otimes_{\twoB})_*(\twoE_1 \times \twoE_2)\big) \times \twoE_3\big]$\\ 
&$\cong$ & $D_{\twoA}^*(\otimes_{\twoB})_*\big[(D_{\twoA} \times \id_{\twoA})^*\big((\otimes_{\twoB})_*(\twoE_1 \times \twoE_2) \times \twoE_3\big)\big]$\\
& $\cong$ & $D_{\twoA}^*(\otimes_{\twoB})_*\big[(D_{\twoA} \times \id_{\twoA})^*\big[(\otimes_{\twoB} \times \id_{\twoB})_*((\twoE_1 \times \twoE_2) \times \twoE_3)\big]\big]$\\
&$\cong$ & $D_{\twoA}^*(D_{\twoA} \times \id_{\twoA})^*(\otimes_{\twoB})_*(\otimes_{\twoB} \times \id_{\twoB})_*\big[(\twoE_1 \times \twoE_2) \times \twoE_3\big]$\\
&$\cong$ & $\big[(D_{\twoA} \times \id_{\twoA}) \circ D_{\twoA}\big]^*\big(\otimes_{\twoB} \circ (\otimes_{\twoB} \times \id_{\twoB})\big)_*\big((\twoE_1 \times \twoE_2) \times \twoE_3\big)$
\end{tabular}
\end{center}
By repeating the above arguments starting with $\twoE_1 +(\twoE_2 + \twoE_3)$, we find that  $\twoE_1 +(\twoE_2 + \twoE_3) \cong [(\id_{\twoA} \times D_{\twoA}) \circ D_{\twoA}]^*\big(\otimes_{\twoB} \circ (\id_{\twoB} \times \otimes_{\twoB})\big)_*((\twoE_1 \times \twoE_2) \times \twoE_3)$. Using the associativity constraint $\ass \colon \otimes_{\twoB} \circ (\otimes_{\twoB} \times \id_{\twoB}) \Ra \otimes_{\twoB} \circ (\id_{\twoB} \times \otimes_{\twoB})$ of a Picard 2-stacks and observing that $(D_{\twoA} \times \id_{\twoA})\circ D_{\twoA} = (\id_{\twoA} \times D_{\twoA}) \circ D_{\twoA}$, we find that $(\twoE_1 +\twoE_2) + \twoE_3 \cong \twoE_1 +(\twoE_2 + \twoE_3)$.   
 
Commutativity: It is clear from the formula (\ref{eq:+}).

Neutral element: It is the product $\twoA \times \twoB$ of the extension $\twoA=(\boldsymbol 0 \rightarrow \twoA,\twoA,  \id:\twoA \rightarrow \twoA,0)$ of $\twoA$ by $\boldsymbol 0$ with the extension 
	$\twoB=(\id:\twoB \rightarrow \twoB, \twoB,\twoB \rightarrow \boldsymbol 0,0)$ of $\boldsymbol 0$ by $\twoB$. 
\end{proof}


\section{Homological interpretation of extensions of Picard 2-stacks}

Let $\twoA$ and $\twoB$ be two Picard 2-stacks.

\begin{lemma}\label{lemma:the_equivalence}
Let $\twoE=(I,\twoE,J,\varepsilon)$ be an extension of $\twoA$ by $\twoB$. Then the Picard 2-stack $\Hom_{2\PICARD}(\twoA,\twoB)$ is equivalent to $\HomExt(\twoE,\twoE)$. In particular, $\HomExt(\twoE,\twoE)$ is endowed with a Picard 2-stack structure.
\end{lemma}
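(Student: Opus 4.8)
The plan is to exhibit an equivalence of $\ES$-2-stacks between $\Hom_{2\PICARD}(\twoA,\twoB)$ and $\HomExt(\twoE,\twoE)$, and then to transport the Picard structure of the former (which is a Picard 2-stack by Example \ref{example}) along this equivalence. Throughout, an object of $\HomExt(\twoE,\twoE)$ is a morphism of extensions from $\twoE$ to itself inducing the identity on $\twoA$ and on $\twoB$, so it is the data of an additive 2-functor $F:\twoE\ra\twoE$, two morphisms of additive 2-functors $\alpha:F\circ I\Ra I$ and $\beta:J\circ F\Ra J$, and a modification $\Phi$ as in \eqref{morphism_of_extensions}. First I would build a 2-functor $\Lambda\colon\Hom_{2\PICARD}(\twoA,\twoB)\ra\HomExt(\twoE,\twoE)$ by \emph{translation by $I\phi J$}. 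To $\phi:\twoA\ra\twoB$ I associate $F_\phi:=\id_\twoE\otimes(I\circ\phi\circ J)$, i.e. $e\mapsto F_\phi(e)=e\otimes I\phi J(e)$; this is additive since the pointwise $\otimes$-sum of additive 2-functors is additive. The trivializations come from $\varepsilon\colon J\circ I\Ra 0$: additivity of $J$ and $\varepsilon$ give $J\circ F_\phi\cong J\otimes(JI)\phi J\cong J$, hence $\beta$, while $F_\phi\circ I\cong I\otimes I\phi(JI)\cong I$ gives $\alpha$; the modification $\Phi$ is assembled from $\varepsilon$ and the constraints $\ass,\comm$ of $\twoE$. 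On 1- and 2-arrows $\Lambda$ is defined by whiskering with $I$ and $J$.

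Conversely I would construct $\Xi\colon\HomExt(\twoE,\twoE)\ra\Hom_{2\PICARD}(\twoA,\twoB)$ by \emph{subtracting the identity and factoring}, the categorification of the classical formula $F=\id+I\phi J$. Given $(F,\alpha,\beta,\Phi)$, form the additive 2-functor $D:=F\otimes(-\id_\twoE)$, $e\mapsto F(e)\otimes e^{-1}$. Using $\beta$ and additivity of $J$ one gets $J\circ D\cong J\otimes J^{-1}\cong 0$, so by the extension condition $\twoB\simeq\twoKer(J)$ (Definition \ref{def:extpic2stacks}, \ref{def:homotopy_kernel}) the universal property of the kernel makes $D$ factor as $D\cong I\circ\widetilde D$ for an essentially unique $\widetilde D\colon\twoE\ra\twoB$; using $\alpha$ one gets $D\circ I\cong 0$, whence $\widetilde D\circ I\cong 0$, and the dual condition $J\colon\twoCo(I)\simeq\twoA$ lets $\widetilde D$ factor as $\widetilde D\cong\phi\circ J$ for an essentially unique $\phi\colon\twoA\ra\twoB$. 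I set $\Xi(F,\alpha,\beta,\Phi):=\phi$, extending $\Xi$ to higher arrows by the induced factorizations.

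Then I would verify $\Xi\circ\Lambda\cong\id$ and $\Lambda\circ\Xi\cong\id$. For the first, $F_\phi\otimes(-\id_\twoE)\cong I\phi J$ forces $\widetilde{F_\phi}\cong\phi\circ J$, hence $\Xi(F_\phi)\cong\phi$; for the second, $\Xi(F)$ produces $\phi$ with $I\phi J\cong D=F\otimes(-\id_\twoE)$, so $\Lambda\Xi(F)=\id_\twoE\otimes I\phi J\cong\id_\twoE\otimes D\cong F$. Since all these constructions are carried out fibrewise over the objects $U$ of $\ES$ and commute with restriction and 2-descent, $\Lambda$ and $\Xi$ are morphisms of $\ES$-2-stacks, so the above isomorphisms upgrade to an equivalence of 2-stacks. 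Transporting the Picard structure of $\Hom_{2\PICARD}(\twoA,\twoB)$ along $\Lambda$ then endows $\HomExt(\twoE,\twoE)$ with a Picard 2-stack structure; moreover one checks that, because $JI\cong 0$, the pointwise sum $\phi\otimes_\twoB\psi$ is sent by $\Lambda$ to the composite $F_\phi\circ F_\psi\cong F_{\phi+\psi}$, so the transported group law is simply composition of morphisms of extensions.

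The main obstacle is entirely the higher coherence. One must actually produce the modification $\Phi$ for $F_\phi$, and in the reverse direction check that the two successive factorizations defining $\Xi$ are compatible with the given $\alpha,\beta,\Phi$; one must also verify that $\Lambda$ and $\Xi$ respect 2-morphisms and 3-morphisms of extensions, i.e. the pasting diagrams \eqref{diagram:1-morphism_modification} and \eqref{diagram:compatibility_condition_of_3_morphisms_of_extensions} (and the condition relating \eqref{diagram:compatibility_condition_of_2_morphisms_of_extensions} to \eqref{diagram:compatibility_condition_of_2_morphisms_of_extensions_bis}). Keeping track of the constraints $\ass,\comm$ together with the coherence of the equivalences $\twoB\simeq\twoKer(J)$ and $\twoA\simeq\twoCo(I)$ is where the real bookkeeping lies. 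If desired, this can be discharged by passing through the triequivalence of Proposition \ref{proposition:triequivalence_of_extensions} to the explicit length-3-complex model in $\twocatT$, where $\widetilde D$ and $\phi$ become honest factorizations of complexes and the coherence reduces to verifications on cochains.
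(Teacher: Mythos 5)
Your proposal is correct and takes essentially the same approach as the paper: the paper's entire proof consists of exhibiting precisely your translation functor $\Lambda$, namely $\phi \mapsto \big(a \mapsto a \otimes I\phi J(a)\big)$, as the equivalence between $\Hom_{2\PICARD}(\twoA,\twoB)$ and $\HomExt(\twoE,\twoE)$. Your additional construction of the quasi-inverse $\Xi$ by kernel/cokernel factorization, and your discussion of the coherence data $(\alpha,\beta,\Phi)$ and of transporting the Picard structure, simply carry out the verification that the paper leaves entirely to the reader.
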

\begin{proof}
The equivalence is given via the additive 2-functor 
\begin{eqnarray}
\nonumber \Hom_{2\PICARD}(\twoA,\twoB) &\longrightarrow & \HomExt(\twoE,\twoE) \\
 \nonumber  F& \mapsto & \big(a \mapsto a + IFJ (a) \big).
\end{eqnarray}
\end{proof}

By the above Lemma, the homotopy groups $\pi_i(\HomExt(\twoE,\twoE))$ for $i=0,1,2$ are abelian groups.
Since by definition $\Ext^{-i}(\twoA,\twoB)=\pi_{i}(\HomExt(\twoE,\twoE))$, we have

\begin{corollary} 
The sets $\Ext^i(\twoA,\twoB)$, for $i=0,-1,-2$, are abelian groups.
\end{corollary}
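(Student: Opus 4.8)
The plan is to deduce the statement directly from Lemma~\ref{lemma:the_equivalence} together with the general fact that the homotopy groups of a Picard 2-stack are abelian. First I would invoke Lemma~\ref{lemma:the_equivalence}: it provides an equivalence of Picard 2-stacks between $\Hom_{2\PICARD}(\twoA,\twoB)$ and $\HomExt(\twoE,\twoE)$, and since $\Hom_{2\PICARD}(\twoA,\twoB)$ is a Picard 2-stack (Example~\ref{example}), transport of structure along this equivalence endows $\HomExt(\twoE,\twoE)$ with a Picard 2-stack structure. This is where essentially all the real content lies.

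Next I would recall why, for an arbitrary Picard 2-stack $\twoP$, each homotopy group $\pi_i(\twoP)$ (for $i=0,1,2$) is an abelian group. For $\pi_0(\twoP)$ this is immediate from the definition: the group law $\otimes$ induces an addition on the sheaf of equivalence classes of objects, and the commutativity constraint $\comm$ makes this addition abelian. For the higher groups, recall that $\Aut(e)$ is a Picard stack, so that $\pi_1(\twoP)=\pi_0(\Aut(e))$ and $\pi_2(\twoP)=\pi_1(\Aut(e))$ are the homotopy sheaves of a Picard stack, hence abelian groups by the classical theory of Picard stacks.

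Finally, applying this to $\twoP=\HomExt(\twoE,\twoE)$ and using the definitional identity $\Ext^{-i}(\twoA,\twoB)=\pi_i(\HomExt(\twoE,\twoE))$ for $i=0,1,2$, I would conclude that $\Ext^i(\twoA,\twoB)$ is an abelian group for $i=0,-1,-2$. There is no serious obstacle beyond Lemma~\ref{lemma:the_equivalence} itself; the only point worth double-checking is that the abelian group structures on the $\pi_i$ are genuinely transported by that equivalence, i.e. that an equivalence of Picard 2-stacks induces isomorphisms of the homotopy groups respecting their group laws. This holds because such an equivalence is in particular an additive 2-functor whose underlying morphism is an equivalence of 2-stacks, so it is compatible with the $\otimes$ and $\comm$ that define the group laws on the $\pi_i$.
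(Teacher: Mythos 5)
Your proposal is correct and follows exactly the paper's own argument: Lemma~\ref{lemma:the_equivalence} endows $\HomExt(\twoE,\twoE)$ with a Picard 2-stack structure, the homotopy groups $\pi_i$ of a Picard 2-stack are abelian, and the definitional identity $\Ext^{-i}(\twoA,\twoB)=\pi_i(\HomExt(\twoE,\twoE))$ finishes the proof. The additional details you supply (why the $\pi_i$ of a Picard 2-stack are abelian, and that the group structure transports along an equivalence) are points the paper leaves implicit, but the route is the same.
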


\begin{proof}[Proof of Theorem \ref{thm:introext} for i=0,-1,-2]
According to Lemma \ref{lemma:the_equivalence}, the homotopy groups of\\
 $\Hom_{2\PICARD}(\twoA,\twoB)$ and $\HomExt(\twoE,\twoE)$ are isomorphic and so by Example \ref{example} we conclude that 
$\Ext^i(\twoA,\twoB) \cong \pi_{-i}(\Hom_{2\PICARD}(\twoA,\twoB)) \simeq {\mathrm{H}}^i \big(\tau_{\leq 0}{\mathrm{R}}\hhom([\twoA],[\twoB])\big) = \hhom_{\der}([\twoA],[\twoB][i])$.
\end{proof}

Before we prove Theorem \ref{thm:introext} for $i=1$, we state the following Definition:

\begin{definition}\label{definition:split_extension}
An extension $\twoE=(I,\twoE,J,\varepsilon)$ of $\twoA$ by $\twoB$ is \emph{split} if one of the following equivalent conditions is satisfied:
\begin{enumerate}
\item there exists an additive 2-functor $V: \twoE \ra \twoB$ and a morphism of additive 2-functors $\alpha: V \circ I \Rightarrow \id_{\twoB}$;
\item there exists an additive 2-functor $U: \twoA \ra \twoE$ and a morphism of additive 2-functors $\beta: J \circ U \Rightarrow \id_{\twoA}$;
\item $\twoE$ is equivalent as extension of $\twoA$ by $\twoB$ (see Definition \ref{def:equivalenceofext}) to the neutral object $\twoA \times \twoB$ of the group law defined in (\ref{eq:+}). 
\end{enumerate}
\end{definition}

\begin{proof}[Proof of Theorem \ref{thm:introext} for i=1]
First we construct a morphism from the group $\Ext^1(\twoA,\twoB)$ of equivalence classes of extensions of $\twoA$ by $\twoB$ to the group $\eext^1([\twoA],[\twoB])$
\[ \Theta \colon \Ext^{1}(\twoA,\twoB) \longrightarrow \eext^1([\twoA],[\twoB]),\]
and a morphism from the group $\eext^1([\twoA],[\twoB])$ to the group $\Ext^1(\twoA,\twoB)$
\[ \Psi \colon {\mathrm{Ext}}^1([\twoA],[\twoB]) \longrightarrow \Ext^{1}(\twoA,\twoB).\]
Then we check that $\Theta \circ \Psi = \id = \Psi \circ \Theta $ and that $\Theta$ is an homomorphism of groups.

Before the proof we fix the following notation: if $A$ is a complex of $\twocatD$ we denote by $\twoA$ the corresponding Picard 2-stack $2\st(A)$. Moreover if $f:A \ra E$ is a morphism in $\twocatD$, we denote by $F: \twoA \ra \twoE$ a representative of the equivalence class of additive 2-functors corresponding to the morphism $f$ via the equivalence of categories (\ref{2st_flat_flat}). 
 
(1) Construction of $\Theta$: Consider an extension $\twoE=(I,\twoE,J,\varepsilon)$ of $\twoA$ by $\twoB$ and denote by $E=(i,E,j,R)$ the corresponding extension of $A=[\twoA]$ by $B=[\twoB]$ in the tricategory $\twocatT$. By Remark \ref{rem:exact-ext} to the extension $E$ is associated the distinguished triangle $B \stackrel{i} \ra E \stackrel{j}{\ra} A \ra +$ in $\der$ which furnishes the long exact sequence
\begin{equation}\label{eqn:thm12}
 \xymatrix@1@C=0.6cm{\cdots \ar[r] & \hhom_{\der}(A,B)  \ar[r]^{i \circ} & \hhom_{\der}(A,E) \ar[r]^{j \circ }& \hhom_{\der}(A,A) \ar[r]^(0.55){\partial} & \eext^1(A,B) \ar[r] & \cdots}
\end{equation}
We set $\Theta(\twoE)= \partial (\id_A).$ The naturality of the connecting map $\partial$ implies that $\Theta(\twoE)$ depends only on the equivalence class of the extension $\twoE$.

\begin{lemma} 
If $\eext^1(A,B)=0$, then every extension of $\twoA$ by $\twoB$ is split.
\end{lemma}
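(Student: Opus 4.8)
The plan is to read the splitting off directly from the long exact sequence (\ref{eqn:thm12}). First I would fix an arbitrary extension $\twoE=(I,\twoE,J,\varepsilon)$ of $\twoA$ by $\twoB$, pass to the corresponding extension $E=(i,E,j,R)$ of $A=[\twoA]$ by $B=[\twoB]$ in the tricategory $\twocatT$, and invoke Remark \ref{rem:exact-ext} to obtain the distinguished triangle $B \stackrel{i}{\ra} E \stackrel{j}{\ra} A \ra +$ in $\der$. Applying the covariant functor $\hhom_{\der}(A,-)$ to this triangle produces exactly the long exact sequence (\ref{eqn:thm12}).

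The key step is then immediate: since $\eext^1(A,B)=0$, the connecting map $\partial \colon \hhom_{\der}(A,A) \ra \eext^1(A,B)$ is the zero map, so by exactness the preceding map $j \circ \colon \hhom_{\der}(A,E) \ra \hhom_{\der}(A,A)$ is surjective. In particular there exists a morphism $u \colon A \ra E$ in $\der$ (equivalently in $\twocatD$) satisfying $j \circ u = \id_A$.

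It remains to translate this algebraic splitting into the geometric language of Definition \ref{definition:split_extension}. Via the equivalence of categories $2\st^{\flat\flat}$ of Corollary \ref{tatar:corollary}, the morphism $u$ corresponds to an equivalence class of additive 2-functors; choosing a representative yields an additive 2-functor $U \colon \twoA \ra \twoE$. The relation $j \circ u = \id_A$ in $\der$ says precisely that the additive 2-functors $J \circ U$ and $\id_{\twoA}$ lie in the same equivalence class, hence are 2-isomorphic up to a modification. This 2-isomorphism is exactly a morphism of additive 2-functors $\beta \colon J \circ U \Ra \id_{\twoA}$, so condition (2) of Definition \ref{definition:split_extension} is satisfied and $\twoE$ is split.

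The one point demanding care -- and what I expect to be the main, if modest, obstacle -- is this final translation. The equality $j \circ u = \id_A$ holds only at the level of the derived category, i.e. of equivalence classes in $2\FPICARD$, and one must ensure it produces an honest morphism of additive 2-functors witnessing the splitting rather than a mere coincidence of classes. This is guaranteed by the very construction of $2\st^{\flat\flat}$, whose arrows are by definition the 2-isomorphism classes (up to modification) of additive 2-functors, so that the coincidence of two such classes is realized by an actual morphism of additive 2-functors.
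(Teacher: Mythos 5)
Your proof is correct and takes essentially the same route as the paper: both arguments use the long exact sequence (\ref{eqn:thm12}) to lift $\id_A$ to a morphism $A \ra E$ when $\eext^1(A,B)=0$, and then pass through the equivalence $2\st^{\flat\flat}$ to obtain an additive 2-functor $U \colon \twoA \ra \twoE$ together with a morphism of additive 2-functors $J \circ U \Ra \id_{\twoA}$, which is condition (2) of Definition \ref{definition:split_extension}. Your final paragraph simply makes explicit the translation step that the paper leaves implicit, and your justification of it (arrows of $2\FPICARD$ are by construction 2-isomorphism classes of additive 2-functors) is exactly right.
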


\begin{proof} 
By the long exact sequence (\ref{eqn:thm12}), if the cohomology group $\eext^1(A,B)$ is zero, the identity morphisms $\id_A \colon A \rightarrow A$ lifts to a morphism $f \colon A \rightarrow E$ in $\twocatD$ which furnishes an additive 2-functor $F \colon \twoA \rightarrow \twoE$ such that $J \circ F \cong \id_{\twoA}$. Hence, $\twoE$ is a split extension of $\twoA$ by $\twoB$.
\end{proof}

The above lemma means that $\Theta(\twoE)$ is an obstruction for the extension $\twoE$  to be split: $\twoE$ is split if and only if $\id_A \colon A \rightarrow A$ lifts to $\hhom_{\der}(A,E)$, if and only if $\Theta(\twoE)$ vanishes in $\eext^1(A,B)$.

(2) Construction of $\Psi$: Choose a complex $K=[K^{-2} \rightarrow  K^{-1} \rightarrow K^0]$ of $\twocatD$ such that $K^{-2}$, $K^{-1},K^0$ are injective and such that there exists an injective morphism of complexes $s \colon B \ra K$. We complete $s$ into a distinguished triangle
\begin{equation}\label{eqn:thm2.1}
\xymatrix@1{B \ar[r]^s & K \ar[r]^(0.35)t & \MC(s) \ar[r] & +,}
\end{equation}
in $\der$. Setting $L= \tau_{\geq -2} \MC(s)$, the above distinguished triangle furnishes an extension of Picard 2-stacks 
\begin{equation*}\label{eqn:thm2.3}
\xymatrix@1{\twoB \ar[r]^S & \twoK \ar[r]^T & \twoL ,}
\end{equation*}
and the long exact sequence
\begin{equation}\label{eqn:thm3}
 \xymatrix@1@C=0.6cm{\cdots \ar[r] & \hhom_{\der}(A,B)  \ar[r] & \hhom_{\der}(A,K) \ar[r]^{ t \circ }& \hhom_{\der}(A,L) \ar[r]^(0.55){\partial} & \eext^1(A,B) \ar[r] &0.}
\end{equation}
Given an element $x$ of $\eext^1(A,B)$, choose an element $u$ of $\hhom_{\der}(A,L)$ such that $\partial (u) =x$. 
The pull-back $U^*\twoK$ of the extension $\twoK$ via the additive 2-functor $U \colon \twoA \ra \twoL$ corresponding to the morphism $u \colon A \ra L$ of $\der$ is an extension of $\twoA$ by $\twoB$ by Lemma \ref{lemma:pull-back}.\\
We set $\Psi(x) =  U^*\twoK $ i.e. to be precise $\Psi(x)$ is the equivalence class of the extension $U^*\twoK$ of $\twoA$ by $\twoB$. Now we check that the morphism $\Psi$ is well defined, i.e. $\Psi(u)$ doesn't depend on the lift $u$ of $x$. Let $u' \in \hhom_{\der}(A,L)$ be another lift of $x$. From the exactness of the sequence (\ref{eqn:thm3}), there exists $f \in \hhom_{\der}(A,K)$ such that $u'-u=t \circ f ,$
i.e. we have the following commutative diagram 
\begin{equation*}\label{eqn:thm5.1}
\begin{tabular}{c}
\xymatrix{\twoA \ar[r]^{\id_{\twoA}} \ar[d]_F & \twoA \ar[d]^{U'-U}\\ \twoK \ar[r]_{T} & \twoL \ultwocell<\omit>{\pi}}
\end{tabular}
\end{equation*}
Consider now the pull-back $(U'-U)^*\twoK$ of the extension $\twoK$ via the additive 2-functor $U'-U: \twoA \ra \twoL$. The universal property of the pull-back $(U'-U)^*\twoK$ applied to the above diagram furnishes an additive 2-functor $H:\twoA \ra (U'-U)^*\twoK$ and a morphism of additive 2-functors  $\alpha: \pr_1 \circ H \Rightarrow \id_{\twoA}$ (here $\pr_1: (U'-U)^*\twoK \ra \twoA$ is the additive 2-functor underlying the extension $(U'-U)^*\twoK$ of $\twoA$ by $\twoB$). Hence from Definition \ref{definition:split_extension} the extension $(U'-U)^*\twoA$ is split, which means that the extensions $U'^*\twoA$ and $U^*\twoA$ are equivalent.

(3) $ \Theta \circ \Psi = \id$: With the notation of (2), given an element $x$ of $\eext^1(A,B)$, choose an element $u$ of $\hhom_{\der}(A,L)$ such that $\partial (u) =x$. By definition $\Psi(x)=  U^*\twoK$. Because of the naturality of the connecting map $\partial$, the following diagram commutes
\begin{equation*}\label{eqn:thm6}
\begin{tabular}{c}
\xymatrix@C=0.3cm@R=0.4cm{\hhom_{\der}(A,B) \ar[rr] \ar[dd]_{\id} && \hhom_{\der}(A,[U^*\twoK]) \ar[rr] \ar[dd] && \hhom_{\der}(A,A) \ar[rr]^{\partial} \ar[dd]^{u \circ} && \eext^1(A,B) \ar[dd]^{\id}\\&&&&&&\\ \hhom_{\der}(A,B) \ar[rr] && \hhom_{\der}(A,K) \ar[rr] && \hhom_{\der}(A,L) \ar[rr]_{\partial} && \eext^1(A,B)}
\end{tabular}
\end{equation*}
Therefore $\Theta(\Psi(x))=\Theta(U^*\twoK)=\partial(\id_A)=\partial(u \circ \id_A)=\partial(u)=x$, i.e. $\Theta$ surjective.

(4) $\Psi \circ \Theta= \id$: Consider an extension $\twoE=(I,\twoE,J,\varepsilon)$ of $\twoA$ by $\twoB$ and denote by $E=(i,E,j,R)$ the corresponding extension of $A=[\twoA]$ by $B=[\twoB]$ in $\twocatT$. As in (2), choose a complex $K$ of $\twocatD$ such that $K^{-2}$, $K^{-1},K^0$ are injective and such that there exists an injective morphism of complexes $s \colon B \ra K$. Complete $s \colon B \ra K$ into the distinguished triangle (\ref{eqn:thm2.1}) and let $L= \tau_{\geq -2} \MC(s)$. The injectivity of $K$ furnishes a lift $u \colon E \ra K$ of the morphism of complexes $s \colon B \ra K$. From the axioms of the triangulated categories, there exists a morphism $v' \colon A \ra L$ giving rise to the morphism of distinguished triangles
\begin{equation}\label{eqn:thm7}
\begin{tabular}{c}
\xymatrix@C=0.3cm@R=0.3cm{B \ar[rr]^i \ar[dd]_{\id} && E \ar[rr]^j \ar[dd]^u && A \ar[rr] \ar[dd]^{v'} && B[1] \ar[dd]^{\id}\\&&&&&&\\ B \ar[rr]_s && K \ar[rr]_t && L \ar[rr] && B[1]}
\end{tabular}
\end{equation}
which leads to a morphism of long exact sequences
\begin{equation}\label{eqn:thm8}
\begin{tabular}{c}
\xymatrix@C=0.3cm@R=0.3cm{\hhom_{\der}(A,B) \ar[rr]^{i \circ} \ar[dd]_{\id} && \hhom_{\der}(A,E) \ar[rr]^{j \circ} \ar[dd]^{u \circ} && \hhom_{\der}(A,A) \ar[rr]^{\partial} \ar[dd]^{v' \circ} && \eext^1(A,B) \ar[dd]^{\id}\\&&&&&&\\ \hhom_{\der}(A,B) \ar[rr]_{s \circ} && \hhom_{\der}(A,K) \ar[rr]_{t \circ} && \hhom_{\der}(A,L) \ar[rr]_{\partial} && \eext^1(A,B)}
\end{tabular}
\end{equation}
Let $\Theta(\twoE)=\partial(\id_A)=y$ with $y$ an element of $\eext^1(A,B)$. By definition $\Psi(y)=V^*\twoK$ with $v$ an element of $ \hhom_{\der}(A,L)$ such that $\partial(v)=y$. From the commutativity of the diagram (\ref{eqn:thm8}), $v'-v= t \circ f$ with $f \in \hhom_{\der}(A,K),$ which shows as in (2) that the extensions $V^*\twoK$ and $ V'^*\twoK$ are equivalent. From the universal property of the pull-back $V'^*\twoK$ applied to the central square of (\ref{eqn:thm7}), there exists an additive 2-functor $H: \twoE \ra V'^*\twoK$ and two morphisms of additive 2-functors $ pr_1 \circ H \Rightarrow J$, $pr_2 \circ H \Rightarrow U$ (here  $\pr_1: V'^*\twoK \ra \twoA$ and $\pr_2: V'^*\twoK \ra \twoK$ are the additive 2-functors underlying the pull-back $V'^*\twoK$), which furnish a morphism of extensions $(\id_\twoB,H,\id_\twoA,\alpha: H \circ I \Rightarrow I' \circ id_\twoB,\beta: pr_1 \circ H \Rightarrow id_\twoA \circ J,\Phi)$ from $\twoE$ to $V'^*\twoK$ inducing the identity on $\twoA$ and $\twoB$ (here $I': \twoB \rightarrow V'^*\twoK$ is the additive two functor underlying the extension $V'^*\twoK$ of $\twoA$ by $\twoB$). By definition, the extensions $\twoE$ and $V'^*\twoK$ are then equivalent. Summarizing $\Psi (\Theta(\twoE))=\Psi (y)= V^*\twoK \cong V'^*\twoK \cong \twoE$, i.e. $\Theta$ is injective.

(5) $\Theta$ is a group homomorphism: Consider two extensions $\twoE, \twoE'$ of $\twoA$ by $\twoB$. With the notations of (2) we can suppose that $\twoE= U^*\twoK$ and $\twoE'= U'^*\twoK$ with $U,U':\twoA \ra \twoL$ two additive 2-functors corresponding to two morphisms $u,u':A \ra L$ of $\twocatD$. Now by definition of sum in $\Ext^1(\twoA,\twoB)$ (see formula (\ref{eq:+})), we have
\begin{eqnarray}
\nonumber \twoE + \twoE' & = & D_{\twoA}^* (\otimes_{\twoB})_*  \big(U^*\twoK \times U'^*\twoK \big)\\
\nonumber  & = & D_{\twoA}^* (\otimes_{\twoB})_* (U \times U')^* (\twoK \times \twoK)\\
\nonumber  & = & (U + U')^* D_{\twoL}^* (\otimes_{\twoB})_*  (\twoK \times \twoK)\\
\nonumber  & = & (U + U')^*  (\twoK + \twoK)
\end{eqnarray}
where $D_{\twoA}:\twoA \rightarrow \twoA \times \twoA $ and $D_{\twoL}:\twoL \rightarrow \twoL \times \twoL$ are the diagonal additive 2-functors of $\twoA$ and $\twoL$ respectively,  and $\otimes_{\twoB}: \twoB \times \twoB \rightarrow \twoB$ is the morphism of 2-stacks underlying the Picard 2-stack $\twoB$. This calculation shows that $\Theta ( \twoE + \twoE') = \partial (u+u')$ where $\partial \colon \hhom_{\der}(A,L) \ra \eext^1(A,B)$ is the connecting map of the long exact sequence (\ref{eqn:thm3}). Hence, $ \Theta ( \twoE + \twoE')=\partial (u+u') = \partial (u)+\partial (u')= \Theta ( \twoE )+ \Theta( \twoE')$ . 
\end{proof}

\appendix

\section{Long exact sequence involving homotopy groups}\label{Annex}

\begin{proposition}
Let $(I,\twoE,J,\varepsilon)$ be an extension of $\twoA$ by $\twoB$. There exists two connecting morphisms $\Gamma: \pi_2(\twoA) \ra \pi_1(\twoB)$ and $\Delta:\pi_1(\twoA) \ra \pi_0(\twoB)$ such that the sequence of abelian sheaves
\begin{equation}\label{long_exact_sequence}
\begin{tikzpicture}[baseline=(current bounding box.center), descr/.style={fill=white,inner sep=2.5pt},scale=2.25]
\node (O) at (-3,0) {$0$};
\node (A) at (-2,0) {$\pi_2(\twoB)$};
\node (B) at (-1,0) {$\pi_2(\twoE)$};
\node (C) at (0,0) {$\pi_2(\twoA)$};
\node (D) at (1,0) {$\pi_1(\twoB)$};
\node (E) at (2,0) {$\pi_1(\twoE)$};
\node (F) at (-2,-1) {$\pi_1(\twoA)$};
\node (G) at (-1,-1) {$\pi_0(\twoB)$};
\node (H) at (0,-1) {$\pi_0(\twoE)$};
\node (I) at (1,-1) {$\pi_0(\twoA)$};
\node (J) at (2,-1) {$0$};
\node (L) at (-1,-0.45) {$$};
\node (M) at (1,-0.45) {$$};
\path[->, font=\scriptsize]
(O) edge (A)
(A) edge node[above]{$\pi_2 (I)$} (B)
(B) edge node[above]{$\pi_2 (J)$} (C)
(C) edge node[above]{$\Gamma$} (D)
(D) edge node[above]{$\pi_1 (I)$} (E)
(F) edge node[below]{$\Delta$} (G)
(G) edge node[below]{$\pi_0 (I)$} (H)
(H) edge node[below]{$\pi_0 (J)$} (I)
(I) edge (J)
(L) edge[-] node[descr]{$\pi_1 (J)$} (M);
\draw [rounded corners]
(2.1,-0.1) -- (2.1,-0.45) -- (0.936,-0.45);
\draw [->, rounded corners]
(-0.936,-0.45) -- (-1.9,-0.45) -- (-1.9,-0.87);
\end{tikzpicture}
\end{equation}
is a long exact sequence.
\end{proposition}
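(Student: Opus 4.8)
The plan is to obtain (\ref{long_exact_sequence}) as the long exact cohomology sequence of a distinguished triangle in $\der$, transported to homotopy groups through the dictionary $\pi_i(\twoP)=\mathrm{H}^{-i}([\twoP]^{\flat\flat})$ recalled in Section~\ref{section:recall}. First I would set $A=[\twoA]^{\flat\flat}$, $B=[\twoB]^{\flat\flat}$, $E=[\twoE]^{\flat\flat}$, and let $(i,E,j,R)$ be the extension of $A$ by $B$ in $\twocatT$ corresponding to $(I,\twoE,J,\varepsilon)$ under the triequivalence $2\st$ (Proposition~\ref{proposition:triequivalence_of_extensions}). By Remark~\ref{rem:exact-ext}, this data produces a distinguished triangle
\[ B \xrightarrow{\,i\,} E \xrightarrow{\,j\,} A \to B[1] \]
in $\der$, to which the cohomological functor associates the long exact sequence $\cdots \to \mathrm{H}^{k}(B) \to \mathrm{H}^{k}(E) \to \mathrm{H}^{k}(A) \xrightarrow{\partial} \mathrm{H}^{k+1}(B) \to \cdots$.

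Next I would truncate this sequence. Since $A$, $B$, $E$ lie in $\twocatD$, their cohomology sheaves vanish outside degrees $-2,-1,0$; in particular $\mathrm{H}^{-3}(A)=0$ makes $\mathrm{H}^{-2}(B)\to\mathrm{H}^{-2}(E)$ injective (with a $0$ to its left), while $\mathrm{H}^{1}(B)=0$ makes $\mathrm{H}^{0}(E)\to\mathrm{H}^{0}(A)$ surjective. Reading the nine non-trivial terms through $\pi_i(-)=\mathrm{H}^{-i}([-]^{\flat\flat})$ rewrites them as $\pi_2(\twoB),\ldots,\pi_0(\twoA)$ in exactly the pattern of (\ref{long_exact_sequence}), the two connecting homomorphisms $\partial\colon\mathrm{H}^{-2}(A)\to\mathrm{H}^{-1}(B)$ and $\partial\colon\mathrm{H}^{-1}(A)\to\mathrm{H}^{0}(B)$ becoming $\Gamma\colon\pi_2(\twoA)\to\pi_1(\twoB)$ and $\Delta\colon\pi_1(\twoA)\to\pi_0(\twoB)$ respectively.

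It then remains to identify the induced arrows: the cohomology maps $\mathrm{H}^{-i}(i)$ and $\mathrm{H}^{-i}(j)$ must be matched with the geometric maps $\pi_i(I)$ and $\pi_i(J)$ attached to the additive 2-functors $I$ and $J$. This is a naturality statement for the dictionary $\pi_i(-)=\mathrm{H}^{-i}([-]^{\flat\flat})$, and it holds because $\pi_i$ and $\mathrm{H}^{-i}$ are functorial and the equivalence $2\st^{\flat\flat}$ of Corollary~\ref{tatar:corollary} intertwines them on 1-arrows as well as on objects. As a consistency check, the surjectivity of $\pi_0(J)$ and the injectivity of $\pi_2(I)$ built into Definition~\ref{def:extpic2stacks} are recovered here precisely as exactness at the two ends of the sequence.

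The main obstacle I expect is this compatibility itself: one must verify that an additive 2-functor and the fraction representing it induce the same morphism on each $\pi_i$, so that the geometrically defined $\Gamma$ and $\Delta$ coincide with the cohomological $\partial$, and that all these maps are genuine morphisms of \emph{sheaves}. The latter point is automatic, since the $\pi_i$ are by definition the sheafified homotopy presheaves and, on the algebraic side, the cohomology sheaves of complexes in $\twocatD$; the construction therefore survives sheafification. Once this naturality is secured, the exactness of (\ref{long_exact_sequence}) is nothing but the exactness of the cohomology sequence of a distinguished triangle.
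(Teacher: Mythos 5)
Your proposal is correct, but it follows a genuinely different route from the paper's own proof. You transport the extension to $\twocatT$ via Proposition \ref{proposition:triequivalence_of_extensions}, invoke the distinguished triangle $B \xrightarrow{\,i\,} E \xrightarrow{\,j\,} A \ra +$ of Remark \ref{rem:exact-ext}, take the long exact sequence of cohomology sheaves, and translate back through $\pi_i(-)=\mathrm{H}^{-i}([\,-\,]^{\flat\flat})$; all nine exactness statements then come at once from the vanishing of cohomology outside degrees $-2,-1,0$. The paper instead never leaves the geometric side: it constructs $\Gamma$ and $\Delta$ explicitly from an additive 2-functor $\Lambda \colon \Aut(e_{\twoA}) \ra \twoKer(J)$, shows that the induced sequence of Picard stacks $\Aut(e_{\twoB}) \ra \Aut(e_{\twoE}) \ra \Aut(e_{\twoA})$ is left exact, applies \cite[Proposition 6.2.6]{Aldrovandi2009687} to get exactness from the left end down to $\pi_1(\twoA)$, and finishes with explicit diagram chases (with 1- and 2-arrows and the equivalence $\twoB \cong \twoKer(J)$) at $\pi_0(\twoB)$, $\pi_1(\twoA)$ and $\pi_0(\twoE)$. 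Your argument is shorter, more uniform, and would generalize verbatim to Picard $n$-stacks once the corresponding triequivalence exists; its cost is a heavier reliance on the dictionary, and the one point you must nail down is exactly the one you flag: that the identification $\pi_i(2\st(A)) = \mathrm{H}^{-i}(A)$ is \emph{natural}, i.e. that for a fraction $(q,M,p)$ the map $\pi_i\bigl(2\st(p)2\st(q)^{-1}\bigr)$ equals $\mathrm{H}^{-i}(p)\circ\mathrm{H}^{-i}(q)^{-1}$ under these identifications. The paper uses this compatibility implicitly (it is already needed for the equivalence of Definitions \ref{def:extpic2stacks} and \ref{def:extension_via_fractions}, and a similar triangle argument appears in the proof of Theorem \ref{thm:introext} for $i=1$), but it is never isolated as a statement, so in a self-contained write-up you should prove it or cite it from \cite{MR2735751} rather than assert it. In exchange, the paper's construction produces $\Gamma$ and $\Delta$ intrinsically, with explicit formulas on sections, which is of independent use. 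Note finally that since the proposition only asserts the \emph{existence} of connecting morphisms making (\ref{long_exact_sequence}) exact, your derived-category connecting maps are a legitimate choice, and no comparison with the paper's geometric $\Gamma$, $\Delta$ is required.
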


\begin{proof}
Consider the additive 2-functor $\Lambda : \Aut(e_{\twoA})\rightarrow \twoKer(J)$
defined as follows: Any $\varphi \in \Aut(e_{\twoA})(U)$ is sent to $(e_{\twoE},\varphi \circ \mu_J)$ with $\mu_J \colon J(e_{\twoE}) \ra e_{\twoA},$ and any 1-arrow $\beta \colon \varphi \Ra \psi$ in $\Aut(e_{\twoA})(U)$ is sent to $(\id_{e_{\twoE}},\beta*\mu_J)$. On the classifying sheaves, this additive 2-functor induces the morphisms $\Lambda_0: \pi_0(\Aut(e_{\twoA})) \ra \pi_0(\twoKer(J))$ and $\Lambda_1:\pi_1(\Aut(e_{\twoA})) \ra \pi_1(\twoKer(J))$. Recalling that $\twoB \cong \twoKer(J)$, we define $\Gamma$ and $\Delta$ as
\begin{align}
\nonumber	\Delta=\Lambda_0 & \colon \pi_1(\twoA) = \pi_0(\Aut(e_{\twoA})) \lra \pi_0({\twoKer(J)})=\pi_0(\twoB), \\
\nonumber	\Gamma=\Lambda_1 & \colon \pi_2(\twoA) = \pi_1(\Aut(e_{\twoA})) \lra \pi_1({\twoKer(J)})=\pi_1(\twoB) .
\end{align}

From the extension $(I,\twoE,J,\varepsilon)$, we obtain a sequence of Picard stacks
\begin{equation}\label{complex:long_exact_sequence_2}
\xymatrix@1@C=1.5cm{\Aut(e_{\twoB}) \ar[r]^{I_{\oneA}} & \Aut(e_{\twoE}) \ar[r]^{J_{\oneA}} &\Aut(e_{\twoA}),}
\end{equation}
where $I_{\oneA}$ and $J_{\oneA}$ are additive functors defined as follows: for any $U \in S$ and $\varphi:e_{\twoB} \ra e_{\twoB}$ in $\Aut(e_{\twoB})(U)$, $I_{\oneA}(\varphi)$ is an automorphism of $e_{\twoE}$ over $U$ such that $\alpha_{I}: \mu_I \circ I(\varphi)\Rightarrow I_{\oneA}(\varphi) \circ \mu_I$, where $\mu_I$ and $\alpha_I$ are respectively a 1-arrow and a 2-arrow of $\twoE(U)$ which result from the additivity of $I$. Similarly for $J_{\twoA}$. The sequence (\ref{complex:long_exact_sequence_2}) is a complex of Picard stacks with $\varepsilon_{\oneA} \colon J_{\oneA} \circ I_{\oneA} \Ra 0$ obtained from $\varepsilon \colon J \circ I \Ra 0$. It exists a functor
$
\tilde{I}_{\oneA} :\Aut(e_{\twoB}) \rightarrow \Ker(J_{\oneA})
$
that sends $\varphi \in \Aut(e_{\twoB})(U)$ to $(I_{\oneA}(\varphi),\varepsilon_{\oneA}(\varphi)) \in \Ker(J_{\oneA})(U)$. Since $\twoB \cong \twoKer(J)$, $\Aut(e_{\twoB}) \cong \Aut(e_{\twoKer(J)})$. Moreover, $\Aut(e_{\twoKer(J)}) \cong \Ker(J_{\oneA})$ where $e_{\twoKer(J)}=(e_{\twoE},\mu_J)$. Therefore the functor $\tilde{I}_{\oneA}$ is an equivalence and so the sequence of Picard stacks (\ref{complex:long_exact_sequence_2}) is left exact. Applying to it \cite[Proposition 6.2.6]{Aldrovandi2009687}, we get the exactness of (\ref{long_exact_sequence}) from the left end to $\pi_1(\twoA)$. 

The exactness at $\pi_0(\twoE)$ follows from the equivalence $\twoKer(J) \cong \twoB$. The surjectivity of $\pi_0(J)$ follows from the definition of extension. It remains to show the exactness at $\pi_0(\twoB)$ and $\pi_1(\twoA)$.

Exactness at $\pi_0(\twoB)$: Let $[(X,\varphi)] \in \pi_0(\twoKer(J))(U)=\pi_0(\twoB)$ so that $\pi_0(I)[(X,\varphi)] =[e_{\twoE}],$ i.e. there exists a 1-arrow $\psi \colon e_{\twoE} \ra X$ in $\twoE(U)$. Consider the class of the automorphism $\chi \in \pi_0(\Aut(e_{\twoA}))(U)=\pi_1(\twoA)(U)$ such that $\beta : \varphi \circ J(\psi) \Rightarrow \chi \circ \mu_J$ with $\beta$ a 2-arrow. Then $\Delta[\chi]= \Lambda_0[\chi]=[(e_{\twoE},\chi \circ \mu_J)]=[(X,\varphi)]$.

Exactness at $\pi_1(\twoA)$: Let $[\varphi] \in \pi_0(\Aut(e_{\twoA}))(U)=\pi_1(\twoA)(U)$ such that $\Delta[\varphi]=\Lambda_0[\varphi]=[\Lambda\varphi]=[(e_{\twoE},\mu_J)]$. That is, $(e_{\twoE},\varphi \circ \mu_J) \cong (e_{\twoE},\mu_J)$. Then there exists $\psi \colon e_{\twoE} \ra e_{\twoE}$ in $\Aut(e_{\twoE})(U)$ and $\beta:\mu_J \circ J(\psi) \Ra \varphi \circ \mu_J$ in $\twoA(U)$. Thus $\pi_1(J)[\psi]=[\varphi]$. 
\end{proof}

\section{Universal property of the fibered sum}\label{AnnexB}

\begin{proposition}
 The fibered sum $\twoA +^{\twoP} \twoB$ of $\twoA$ and $\twoB$ under $\twoP$ defined in Definition \ref{def:fibered_sum_of_Picard_2_stacks} satisfies the following universal property: For every diagram 
\begin{equation}\label{push_down_diagram_1}
\begin{tabular}{c}
\xymatrix{\twoP \ar[r]^{F} \ar[d]_{G} & \twoA \ar[d]^{H_1}\\ \twoB \ar[r]_{H_2} &\twoC \ultwocell<\omit>{\tau}}
\end{tabular}
\end{equation}
there exists a 4-tuple $(K,\gamma_1,\gamma_2,\Theta)$, where $K \colon \twoA +^{\twoP} \twoB \ra \twoC$ is an additive 2-functor, $\gamma_1 \colon K \circ \inc_1 \Ra H_1$ and $\gamma_2 \colon K \circ \inc_2 \Ra H_2$ are two morphisms of additive 2-functors, and $\Theta$ is a modification of morphisms of additive 2-functors
\begin{equation}\label{universality_of_pullback_4}
\begin{tabular}{c}
\xymatrix{K(\inc_2G) \ar@2[r]^{\invass} \ar@2[d]_{K*\iota} & (K\inc_2)G \ar@{}[d]|{\kesir{\rotatebox{45}{$\Rra$}}{\Theta}}\ar@2[r]^(0.55){\gamma_2*G}& H_2G \ar@2[d]^{\tau}\\ K(\inc_1F) \ar@2[r]_{\invass}& (K\inc_1)F \ar@2[r]_(0.55){\gamma_1*F} & H_1F}
\end{tabular}
\end{equation}
This universal property is unique in the following sense: For any other 4-tuple $(K',\gamma_1',\gamma_2',\Theta')$ as above, there exists a 3-tuple $(\psi,\Sigma_1,\Sigma_2)$, where $\psi \colon K \Ra K'$ is a morphism of additive 2-functors, and $\Sigma_1$, $\Sigma_2$ are two modifications of morphisms of additive 2-functors
\begin{equation*}
\begin{tabular}{cc}
\xymatrix@C=0.3cm@R=0.3cm{K\inc_1 \ar@2[rr]^{\psi*\inc_1} \ar@2[ddr]_{\gamma_1} &\ar@{}[dd]|(0.4){\kesir{\Lla}{\Sigma_1}}& K'\inc_1 \ar@2[ddl]^{\gamma_1'}\\&&\\&H_1&}
&
\xymatrix@C=0.3cm@R=0.3cm{K\inc_2 \ar@2[rr]^{\psi*\inc_2} \ar@2[ddr]_{\gamma_2} &\ar@{}[dd]|(0.4){\kesir{\Lla}{\Sigma_2}}& K'\inc_2 \ar@2[ddl]^{\gamma_2' }\\&&\\&H_2&}
\end{tabular}
\end{equation*}
satisfying the compatibility dual to (\ref{universality_of_pullback_6}) so that for another 3-tuple $(\psi',\Sigma_1',\Sigma_2')$ as above, there exists a unique modification $\mu:\psi \Rra \psi'$ satisfying the following compatibilities for $i=1,2$
\begin{equation*}
\begin{tabular}{c}
\xymatrix@C=0.3cm@R=0.3cm{&\ar@{}[d]|(0.6){\kesir{\rotatebox{90}{\scriptsize{$\Lla$}}\mu}{}} &&&&&\\ K\inc_i \ar@2@/^0.75cm/[rr]^{\psi*\inc_i} \ar@2[rr]_{\psi'*\inc_i} \ar@2[ddr]_{\gamma_i} &\ar@{}[dd]|{\kesir{\Lla}{\Sigma_i'}}& K'\inc_i \ar@2[ddl]^{\gamma_i'}&&K\inc_i \ar@2[rr]^{\psi*\inc_i} \ar@2[ddr]_{\gamma_i} &\ar@{}[dd]|{\kesir{\Lla}{\Sigma_i}}& K'\inc_i \ar@2[ddl]^{\gamma_i'} \\&&&=&&&\\&H_i &&&&H_i&}
\end{tabular}
\end{equation*}
\end{proposition}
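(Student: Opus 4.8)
The plan is to exploit two facts. First, by Definition \ref{def:fibered_sum_of_Picard_2_stacks} the fibered sum $\twoA +^{\twoP} \twoB$ is the image under the triequivalence $2\st$ of the algebraic fibered sum $[\twoA] \boxplus^{[\twoP]} [\twoB] = \tau_{\geq -2}(\MC(\cdots))$ in $\twocatT$. Second, the universal property stated here is formally dual, in the sense of reversing the direction of every $1$-arrow (and transposing the $2$- and $3$-cells accordingly), to the universal property of the fibered product established in Section 2. Since $2\st$ is a triequivalence it induces biequivalences on all hom-bigroupoids and hence preserves and reflects the entire $3$-categorical structure together with the associativity constraints $\ass$; it is therefore harmless to verify the property either directly in $2\PICARD$ by dualizing the pull-back argument, or on the algebraic side where the truncated mapping cone is the homotopy pushout.

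First I would construct the factorizing datum $(K,\gamma_1,\gamma_2,\Theta)$. Given a cocone $(H_1,H_2,\tau)$ as in (\ref{push_down_diagram_1}), the additive $2$-functor $K \colon \twoA +^{\twoP} \twoB \ra \twoC$ is produced exactly as in the proof of Proposition \ref{lemma:torsor_fibered_product}: one writes an object of $\twoA +^\twoP \twoB$ over a hypercover $\oneU$ as a $2$-descent datum built from descent data for $\twoA$ and $\twoB$ glued along $\iota$, and sends it into $\twoC$ using $H_1$ on the $\twoA$-part, $H_2$ on the $\twoB$-part, and $\tau$ on the gluing $1$-arrow. The morphisms $\gamma_1 \colon K \circ \inc_1 \Ra H_1$ and $\gamma_2 \colon K \circ \inc_2 \Ra H_2$ arise from the canonical identifications of these descent data, and the modification $\Theta$ filling (\ref{universality_of_pullback_4}) is the dual of the modification $\Theta$ of the pull-back universal property, with the associators $\ass$ replaced by $\invass$ to account for the reversed orientation.

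Next I would treat the uniqueness clauses. For a second factorization $(K',\gamma_1',\gamma_2',\Theta')$, the morphism $\psi \colon K \Ra K'$ and the modifications $\Sigma_1,\Sigma_2$ are obtained by dualizing, arrow by arrow, the $3$-tuple $(\psi,\Sigma_1,\Sigma_2)$ constructed for the fibered product, and the compatibility they satisfy is precisely the dual of (\ref{universality_of_pullback_6}), gotten by reversing every $1$-arrow and transposing the cells. Finally, for two such $3$-tuples the unique modification $\mu \colon \psi \Rra \psi'$ is the dual of the one furnished at the deepest level of the pull-back universal property, and the two compatibilities for $i=1,2$ are the dualizations of the pasting equalities displayed there.

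The hard part will be purely the coherence bookkeeping: one must check that dualization carries each associativity/unit coherence of the pull-back diagrams to a valid coherence for the push-down, and in particular that the cells marked $\cong$ — which commute only up to the natural modification coming from the Picard structure of Example \ref{example} — remain coherent once the orientations are reversed. Once the orientation conventions relating the $\ass$-laden pull-back diagrams to the $\invass$-laden push-down diagrams are fixed consistently, every remaining step is a routine transport of the Section 2 computation through $2\st$, which is why only a sketch is given.
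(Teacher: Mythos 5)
Your proposal rests on a duality that is not available in this paper, and this is a genuine gap rather than a presentational shortcut. The fibered product of Picard 2-stacks is defined \emph{directly} in $2\PICARD$ (Definition \ref{def:fiberedproduct}: objects are triples $(X,l,Y)$), whereas the fibered sum is not: the paper explicitly circumvents the missing 2-stackification process by \emph{defining} $\twoA +^{\twoP} \twoB$ as $2\st\big([\twoA]\boxplus^{[\twoP]}[\twoB]\big)$, i.e. as $2\st$ of a truncated mapping cone. Consequently there is no pull-back statement that you can formally dualize: the dual of the fibered product's universal property is an abstract colimit condition in $2\PICARD$, and the entire content of this Proposition is that the specific construction $2\st\big(\tau_{\geq -2}(\MC(f-g))\big)$ realizes that condition. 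Your parenthetical claim that ``on the algebraic side the truncated mapping cone is the homotopy pushout'' is precisely the assertion to be proved, not a fact that can be cited; similarly, Proposition \ref{lemma:torsor_fibered_product} compares the algebraic fibered product with the directly defined 2-stack-theoretic one, and for the sum there is no directly defined counterpart to compare with, so ``reversing the arrows'' in that proof produces no statement at all. The paper therefore proves the Proposition by a direct computation: it reduces (via the triequivalence) to the case where all additive 2-functors come from morphisms of complexes and $\tau$ from a chain homotopy $(\tau^0,\tau^{-1})$, writes out the complex $\tau_{\geq -2}(\MC(f-g))$ explicitly, and constructs $K$, $\gamma_1$, $\gamma_2$, $\Theta$ on explicit 2-descent data.

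This is also where your sketch of the construction of $K$ goes wrong concretely. You describe an object of $\twoA+^{\twoP}\twoB$ over a hypercover as ``descent data for $\twoA$ and $\twoB$ glued along $\iota$,'' with $H_1$, $H_2$ applied to the two parts and $\tau$ to the gluing arrow; that is the shape of descent data for the fibered \emph{product}. For the fibered sum, a 2-descent datum is $((a,b),(p,k,l),(m,\alpha,\beta))$ with $(p,k,l)\in\big(P^{0}\oplus(A^{-1}+B^{-1})\big)(V_1)$ and $(m,\alpha,\beta)\in\big(P^{-1}\oplus(A^{-2}+B^{-2})\big)(V_2)$, together with $\rho\in P^{-2}(V_3)$: sections of $P$ in several degrees are entangled with those of $A$ and $B$ at every level of the hypercover (this is the trace of the cokernel in degree $-2$, i.e. of the suppressed stackification). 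Descent data for $\twoA$ and $\twoB$ emerge only after pull-back to $V_0$, via the formulas $(\lambda_A(k),f^{-1}(m),f^{-2}(\rho))$ and $(\lambda_B(l),-g^{-1}(m),-g^{-2}(\rho))$; the functor $K$ is then defined degreewise from the components $h_1^{i}$, $h_2^{i}$, the morphisms $\gamma_1,\gamma_2$ have nontrivial components such as $(0,h_1^{-2}(\alpha))$ rather than being canonical identifications, and the homotopy $\tau$ enters only in the construction of the modification $\Theta$ (whose component is $-\tau^{-1}(m)$), not in the definition of $K$. None of this is recoverable from the coherence bookkeeping you defer to; it is the substance of the proof.
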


\begin{proof} For simplicity, we assume that all additive 2-functors correspond to morphisms of complexes.
If $A=[A^{-2} \stackrel{\delta_A}{\ra} A^{-1} \stackrel{\lambda_A}{\ra} A^0], B=[B^{-2} \stackrel{\delta_B}{\ra} B^{-1} \stackrel{\lambda_B}{\ra} B^0],P=[P^{-2} \stackrel{\delta_P}{\ra} P^{-1} \stackrel{\lambda_P}{\ra} P^0]$ and $f=(f^{-2},f^{-1},f^0): P \ra A, g=(g^{-2},g^{-1},g^0): P \ra B$, the fibered sum $A +^P B=\tau_{\geq -2}(\MC(f-g))$ of $A$ and $B$ under $P$ is explicitly the length 3 complex 
\begin{equation}\label{complex:fibered_sum}
\xymatrix@1@C=1.5cm{\coker(\delta_P,f^{-2}-g^{-2}) \ar[r]^(0.53){\delta_{A +^P B}} & P^0 \oplus (A^{-1} + B^{-1}) \ar[r]^(0.53){\lambda_{A +^P B}} & 0\oplus(A^0+B^0),}
\end{equation}
where $\coker(\delta_P,f^{-2}-g^{-2})$ is the quotient of the abelian sheaf $P^{-1} \oplus (A^{-2} +B^{-2})$ by the image of the morphism $\left( \begin{array}{cc}
\delta_P & 0  \\
f^{-2}-g^{-2} & 0 \end{array} \right),$  $\delta_{A +^P B}=
\left( \begin{array}{cc}
\lambda_P & 0  \\
f^{-1}-g^{-1} & -\delta_A-\delta_B \end{array} \right),$ 
$\lambda_{A +^P B}=
 \left( \begin{array}{cc}
0 & 0  \\
f^{0}-g^{0} & -\lambda_A-\lambda_B \end{array} \right)$.
  Let $\oneU=(V_{\bullet} \ra U)$ be a hypercover of an object $U$ of $\ES$. According to the complex (\ref{complex:fibered_sum}), a 2-descent datum representing an object of $\twoA+^{\twoP}\twoB$ over $U$ relative to $\oneU$ is the collection $((a,b),(p,k,l),(m,\alpha,\beta))$, where $(a,b) \in (A+B)(V_0)$, $(p,k,l) \in (P^{0}\oplus(A^{-1}+B^{-1}))(V_1)$, and $(m,\alpha,\beta) \in (P^{-1}\oplus(A^{-2}+B^{-2}))(V_2)$ satisfy the relations 
\begin{eqnarray}
\nonumber  f^0(p)-\lambda_A(k)& = & d_0^*(a)-d_1^*(a), \\
\nonumber -g^0(p)-\lambda_B(l)& = & d_0^*(b)-d_1^*(b),\\
\nonumber \lambda_P(m)& = & d_0^*(p)-d_1^*(p)+d_2^*(p),\\
\nonumber f^{-1}(m)-\delta_A(\alpha)& = &d_0^*(k)-d_1^*(k)+d_2^*(k), \\
\nonumber -g^{-1}(m)-\delta_B(\beta)& = & d_0^*(l)-d_1^*(l)+d_2^*(l),
\end{eqnarray}
so that there exists $\rho \in P^{-2}(V_3)$ with the property
\begin{eqnarray}
\nonumber d_0^*(m)-d_1^*(m)+d_2^*(m)-d_3^*(m)& = &\delta_P(\rho),\\
\nonumber d_0^*(\alpha)-d_1^*(\alpha)+d_2^*(\alpha)-d_3^*(\alpha)& = &f^{-2}(\rho),\\
\nonumber d_0^*(\beta)-d_1^*(\beta)+d_2^*(\beta)-d_3^*(\beta)& = &-g^{-2}(\rho).
\end{eqnarray}
From these relations we deduce that the collections 
\[(\lambda_A(k),f^{-1}(m),f^{-2}(\rho)) \textrm{ and } (\lambda_B(l),-g^{-1}(m),-g^{-2}(\rho)),\] 
are actually 2-descent data representing objects of $\twoA$ and $\twoB$ over $V_0$ relative to $\oneU$, respectively.

Construction of $K$: $K$ takes the 2-descent datum $((a,b),(p,k,l),(m,\alpha,\beta))$ to the collection
\[(h_1^0(\lambda_A(k))+h_2^0(\lambda_B(l)),h_1^{-1}(f^{-1}(m))-h_2^{-1}(g^{-1}(m)),h_1^{-2}(f^{-2}(\rho))-h_2^{-2}(g^{-2}(\rho))),\]
where $h_1^i$ and $h_2^i$ are the components at the degree $i$ of the morphisms of complexes that correspond to $H_1$ and $H_2$ respectively.

Construction of $\gamma_1$ and $\gamma_2$: Let $(a,k,\alpha)$ be a 2-descent datum representing an object of $\twoA$ over $U$ relative to $\oneU$. Remark that $\inc_1(a,k,\alpha)= ((-a,0),(0,k,0),(0,-\alpha,0))$ and $K \circ \inc_1(a,k,\alpha)=(h_1^0(\lambda_A(k)),0,0)$ is 2-descent datum representing an object of $\twoC$ over $V_0$ relative to $\oneU$. On the other hand, the image of $(a,k,\alpha)$ under the morphisms $H_1$ is the 2-descent datum $(h_1^0(a),h_1^{-1}(k),h_1^{-2}(\alpha))$ representing an object over $U$ relative to $\oneU$ whose pullback to $V_0$ is the collection $(h_1^0(\lambda_A(k)),h_1^{-1}(\delta_A(\alpha)),0)$. Then the component of $\gamma_1$ at $(a,k,\alpha)$ is the 1-arrow $(0,h_1^{-2}(\alpha))$. Similarly, the component of $\gamma_2$ at $(b,l,\beta)$ is $(0,h_2^{-2}(\beta))$.

Construction of $\Theta$: Let $(p,m,\rho)$ be a 2-descent datum representing an object of $\twoP$ over $U$ relative to $\oneU$.  The component of the 2-arrow obtained by composing the 2-arrows $\gamma_2*G$ and $\tau$ on the top and the right faces of the diagram (\ref{universality_of_pullback_4}) at $(p,m,\rho)$ is the 1-arrow $(\tau^0(\lambda_P(m)),h_2^{-2}(g^{-2}(\rho))+\tau^{-1}(\delta_P(\rho)))$ where $\tau^0$ and $\tau^{-1}$ are the components of the chain homotopy that corresponds to the morphism of additive 2-functors $\tau$ in diagram (\ref{push_down_diagram_1}). Similarly, the component of the 2-arrow obtained by composing the 2-arrows $K*\iota$ and $\gamma_1*F$ on the left and the  bottom faces of the diagram (\ref{universality_of_pullback_4}) at $(p,m,\rho)$ is the 1-arrow $(h_1^{-1}(f^{-1}(m))-h_2^{-1}(g^{-1}(m)),h_2^{-2}(g^{-2}(\rho)))$. Using the chain homotopy relations, we deduce that the component of $\Theta$ at $(p,m,\rho)$ is the 1-arrow $-\tau^{-1}(m)$.

The verification of uniqueness is cumbersome but straightforward.
\end{proof}

\bibliographystyle{plain}

\end{document}